\newcommand{\Co}{{\mathbb C}}
\newtheorem{Th}{Theorem}
\newtheorem{Lem}[Th]{Lemma}
\newtheorem{Remark}[Th]{Remark}
\newenvironment{Proof}[1][Proof.]{\begin{trivlist}
\item[\hskip \labelsep {\bfseries #1}]}{\flushright
$\Box$\end{trivlist}}
\begin{document}

{\Large On a ternary generalization of Jordan algebras
\footnote{The work was supported by RFBR 17-01-00258 }

}
\medskip

\medskip

\medskip

\medskip

\textbf{Ivan Kaygorodov$^a$, Alexander Pozhidaev$^{b,c}$, Paulo Saraiva$^d$} 
\medskip

{\tiny

$^a$ Universidade Federal do ABC, Santo Andre, Brasil

$^b$ Novosibirsk State University, Russia

$^c$ Sobolev Institute of mathematics, Novosibirsk, Russia

$^d$ Universidade de Coimbra, CeBER, CMUC e FEUC, Coimbra, Portugal
\smallskip

  E-mail addresses:\smallskip

  Ivan Kaygorodov (kaygorodov.ivan@gmail.com),

  Alexandre Pozhidaev (app@math.nsc.ru),

  Paulo Saraiva (psaraiva@fe.uc.pt).

 }

\

\noindent \textbf{Abstract:} Based on the relation between the notions of Lie triple systems and Jordan algebras, we introduce the $n$-ary Jordan algebras,
an $n$-ary generalization of Jordan algebras obtained via the generalization of the following property $\left[ R_{x},R_{y}\right] \in Der\left( \mathcal{A}\right),$
where $\mathcal{A}$ is an $n$-ary algebra. Next, we study a ternary example of these algebras. Finally, based on the construction of a family of ternary algebras defined by means of the Cayley-Dickson algebras, we present an example of a ternary $D_{x,y}$-derivation algebra ($n$-ary $D_{x,y}$-derivation algebras are the non-commutative version of $n$-ary Jordan algebras).

\

{\bf Keywords:} Jordan algebras, non-commutative Jordan algebras, derivations,  $n$-ary algebras, Lie triple systems, generalized Lie algebras, Cayley-Dickson construction, TKK construction.

\

\section{Introduction}

\hspace{0cm} The notion of Jordan algebra appeared in $1934$  as
the underlying algebraic structure for certain operators in quantum
mechanics \cite{Jordan_et_al}.  Recall that a Jordan algebra is a commutative algebra over a field $\mathbb{F}$
($char\left(\mathbb{F}\right)\neq2$) satisfying the so-called Jordan identity:
\begin{equation}
\left( xy\right) x^{2}=x\left( yx^{2}\right) .\hspace{2cm}
\label{Jordan}
\end{equation}
Since then, the theory of Jordan algebras has been developed, not only in purely algebraic aspects, but also intertwined with other subjects and applications. For instance, the vast class of noncommutative Jordan algebras (it includes, \textit{e.g.}, alternative algebras, Jordan algebras, quasiassociative algebras, quadratic flexible algebras and anticommutative algebras) attracted a lot of attention. Schafer proved that a simple noncommutative Jordan algebra is either a simple Jordan algebra, or a simple quasiassociative algebra, or a simple flexible algebra of degree $2$ \cite{Schafer_non}. Concerning the intervention of Jordan algebras in other areas, and just to mention a couple of these, we can find applications in differential geometry (see  \cite{Faraut_Koranyi} and \cite{Upmeier}) and in optimization methods (see \cite{Faybusovich_2010}). Further, for a motivation and a general overview of Jordan algebras (including applications), read \cite{McCrimmon} and \cite{Iordanescu}.

A related issue has been the attempt to generalize the Jordan algebra structure to the case of algebras with $n$-ary multiplication, with an emphasis to the ternary case. Mostly, these generalizations include Jordan triple systems (as in \cite{bremner_hentzel_2000} and \cite
{gnedbaye2007}), but also other ternary versions (\textit{e.g}, \cite{Bremner2001}). In the present paper we follow a different approach.

According to \cite{osborn69} and \cite{Sidorov_1981}, a  Lie triple algebra is a commutative, nonassociative algebra $\mathcal{A}$  over a field $\mathbb{F}$
($char\left(\mathbb{F}\right)\neq2$) satisfying
\begin{equation}
\left( a,b^{2},c\right)=2b\left( a,b,c\right), 
 \label{Osborn_def}
\end{equation}
where $\left( .,.,.\right)$ stands for the  associator,
\begin{equation*}
\left( a,b,c\right)=(ab)c-a(bc).
\end{equation*}
It is not difficult to check that this identity is equivalent to:
\begin{equation}
R_{\left( x,y,z\right) }=\left[ R_{y},\left[ R_{x},R_{z}\right]
\right], \label{Osborn}
\end{equation}
where $\left[.,.\right]$ stands for the  commutator,
\begin{equation*}
\left[a,b\right]=ab-ba,
\end{equation*}
and $R_{x}$ is a  right multiplication operator,  \textit{i.e.}
\begin{equation*}
y\mapsto yR_{x}=yx .
\end{equation*}

It is simple to observe that every Jordan algebra is a Lie triple algebra, although the opposite is not necessarily true. 
Further, on a commutative algebra, $\mathcal{A}$, the identity (\ref{Osborn}) is equivalent to
\begin{equation}
\left[ R_{x},R_{y}\right] \in Der\left(\mathcal{A}\right) ,
\label{D_x,y-identity}
\end{equation}
where $Der\left( \mathcal{A}\right)$ stands for the Lie algebra of derivations of $\mathcal{A}$. Writing $D_{x,y}$ instead of $\left[ R_{x},R_{y}\right]$, this means that 
\begin{equation}
D_{x,y}\left(ab \right)= D_{x,y}(a)b+a D_{x,y} (b).
\end{equation}
It is also known that,
in a Jordan algebra $\mathcal{J}$,
\begin{equation*}
Inder\left( \mathcal{J}\right) =\left\{ \sum \left[ R_{x_{i}},R_{y_{i}}\right]
:x_{i},y_{i}\in \mathcal{J}\right\}  ,
\end{equation*}
where $Inder\left( \mathcal{J}\right)$ stands for the Lie algebra of inner derivations of $\mathcal{J}$. Thus, in every Jordan algebra $\mathcal{A}$, the commutator of two arbitrary right multiplication operators is a derivation (an inner derivation, to be precise) of $\mathcal{A}$ (see \cite{Faulkner_1967}).

Let $\mathcal{A}$ be an $n$-ary algebra with a multilinear multiplication $\llbracket .,\ldots ,.\rrbracket :\times^{n}\mathbb{V}\to \mathbb{V}$, where $\mathbb{V}$ is the underlying vector space. We propose the following definition: $\mathcal{A}$ is said to be an \textbf{$n$-ary Jordan algebra} if
\begin{equation}
\llbracket x_{\sigma \left( 1\right) },\ldots ,x_{\sigma \left( n\right)
}\rrbracket =\llbracket x_{1},\ldots ,x_{n}\rrbracket  \label{tot-com}
\end{equation}
for every permutation $\sigma \in \mathcal{S}_{n}$ and for every $%
x_{2},\ldots ,x_{n},y_{2},\ldots ,y_{n}\in \mathbb{V}$ and if
\begin{equation}
\left[ R_{(x_2,\ldots ,x_n)},R_{(y_2,\ldots ,y_{n})}\right] \in Der\left(
\mathcal{A}\right) ,  \label{der-Jordan}
\end{equation}
where $\left[ .,.\right] $ stands again for the commutator and $R_{(x_{2},\ldots
,x_{n})},R_{(y_{2},\ldots ,y_{n})}$ are the right multiplication operators, defined in
the usual way:
\begin{equation*}
y\mapsto yR_{(x_{2},\ldots
,x_{n})}=\llbracket y,x_{2},\ldots ,x_{n}\rrbracket  .
\end{equation*} 
For the sake of simplicity, we will often write $R_{x}$ instead of
$R_{(x_{2},\ldots ,x_{n})}$  and, analogously to the binary case, $D_{x,y}$ instead of $\left[R_{x},R_{y}\right] $. Further, we will call a $D_{x,y}$-derivation algebra to every $n$-ary with the identity (\ref{der-Jordan}). Under this notation, (\ref{der-Jordan}) can be written in the following way:
\begin{equation}
 D_{x,y}\ \llbracket z_{1},...,z_{n} \rrbracket = 
 \sum_{i=1}^{n} \llbracket z_{1},..., D_{x,y}\left(z_{i}\right),...,z_{n}\rrbracket. \label{der-Jordan_Dxy}
\end{equation}
Throughout this paper, (\ref{tot-com}) is the total commutativity identity and (\ref{der-Jordan}) (or, equivalently (\ref{der-Jordan_Dxy})) will be cited as the $ D_{x,y}$-identity.

The paper is organized in the following way. In the second section we consider some ternary algebras defined on the direct sum of a field and a vector space, by defining  a general mutiplication depending on three given forms. Discussing the possible cases for these forms, we obtain the first examples of ternary Jordan algebras. 

The third section is devoted to a particular case of the general ternary mutliplication defined in the previous section, restricted to a vector space (over a field of characteristic zero). It turns out that this provides  a new example of ternary Jordan algebra, denoted by $\mathbb{A}$, which is simple. We study its identities of degrees $1$ and $2$ concluding that these result from the total commutativity property. Finally, we conclude that the proposed notion of ternary Jordan algebra doesn't coincide with the notion of Jordan triple system. 

In the fourth section we study the derivation algebra of the simple ternary Jordan algebra $\mathbb{A}$ introduced in the previous section, concluding that it coincides with $so(n)$ and that all derivations of $\mathbb{A}$ are inner.

The fifth section is focused on the search of new examples of ternary Jordan algebras. There, dealing with matrix algebras, we obtain two non-isomorphic symmetrized matrix subalgebras, one of which is simple. Further, defining a certain ternary multiplication on the algebras obtained by the Cayley-Dickson doubling process, we define a $4$-dimensional $D_{x,y}$-algebra over the generalized quaternions. Finally, we present an analog of the TKK-construction to ternary algebras, obtaining new examples of ternary Jordan algebras.

In the last section we recall the concept of reduced algebras of $n$-ary algebras. After this, we conclude that, oppositely to other classes of algebras, the reduced algebras of the ternary Jordan algebra $\mathbb{A}$ are not Jordan algebras in general. We note that other generalizations of Jordan algebras (\textit{e.g.}, Jordan triple systems) also fail this property.

\section{Ternary algebras with a generalized multiplication}

\label{ternary}

Let us consider an $n$-dimensional vector space $\mathbb{V}$ over a field $\mathbb{F}$
equipped with two bilinear, symmetric and nondegenerate forms, $f$ and $h,$
and also with a trilinear, symmetric and nondegenerate form $g$.
Given a basis $\mathcal{B}=\left\{ b_{1},\ldots ,b_{n}\right\} $ of $\mathbb{V}$, those forms are such that:
\begin{equation}
f\left( b_{i},b_{j}\right) =\delta _{ij},\qquad h\left( b_{i},b_{j}\right)
=\delta _{ij}\quad \text{and}\quad g\left( b_{i},b_{j},b_{k}\right) =\delta
_{ijk},
\end{equation}
where $\delta _{ij}$ and $\delta _{ijk}$ are Kronecker deltas.

Consider now a binary multiplication $*$ on the vector space  $\mathbb{F} \oplus \mathbb{V}$
defined by 
\begin{equation*}
\left( \alpha +u \right) * \left(\beta +v \right) = \alpha\beta +f(u,v)+ \alpha v +\beta u, \alpha,\beta \in \mathbb{F}, u,v \in \mathbb{V}.
\end{equation*}

 Then we obtain a Jordan algebra of a symmetric bilinear form $f$, denoted by $J(\mathbb{V},f)$, which is simple if $\dim \mathbb{V} >1$ and $f$ is nondegenerate.
 
 Seeking an analogue of $J(\mathbb{V},f)$ in the case of ternary algebras, we will consider
the same vector space $\mathbb{F} \oplus \mathbb{V}$, where we define a (most general) trilinear multiplication $\llbracket .,.,.\rrbracket$, such that 
 
\begin{equation}\label{general-mult}\llbracket \alpha_{1}+v_{1},\alpha _{2}+v_{2},\alpha _{3}+v_{3}\rrbracket = \end{equation}
$$\label{gen_mult}=\left(\alpha _{1}\alpha
_{2}\alpha _{3}+ \alpha _{1}f\left( v_{2},v_{3}\right) +\alpha _{2}f\left(
v_{1},v_{3}\right) +\alpha _{3}f\left( v_{1},v_{2}\right) +g\left( v_{1},v_{2},v_{3}\right) \right)+ $$
$$+\left( \alpha _{2}\alpha _{3}+h\left( v_{2},v_{3}\right) \right)
v_{1}+\left( \alpha _{1}\alpha _{3}+h\left( v_{1},v_{3}\right) \right)
v_{2}+\left( \alpha _{1}\alpha _{2}+h\left( v_{1},v_{2}\right) \right) v_{3},$$
for arbitrary $\alpha_{i}\in \mathbb{F}$ and $v_{i}\in \mathbb{V}$. The obtained ternary algebra will be denoted by $\mathcal{V}_{f,g,h}$.

Under this assumption, it is clear that $\llbracket .,.,.\rrbracket $ is totally
commutative, that is:
\begin{equation}
\llbracket \alpha_{\sigma \left( 1\right) }+v_{\sigma \left( 1\right)
},\alpha_{\sigma \left( 2\right) }+v_{\sigma \left( 2\right) },\alpha
_{\sigma \left( 3\right) }+v_{\sigma \left( 3\right) }\rrbracket =\llbracket
\alpha _{1}+v_{1},\alpha _{2}+v_{2},\alpha _{3}+v_{3}\rrbracket ,
\label{tot-sym}
\end{equation}
for all $\sigma \in \mathcal{S}_{3}$ and all $\alpha _{i} \in \mathbb{F}$,  $v _{i} \in \mathbb{V}$.

Our purpose is to check if the commutator of right multiplications defines a
derivation of the ternary algebra $\mathcal{V}_{f,g,h},$ that is, considering the linear operators $R_{x}$ and $D_{x,y}$ such that
\begin{equation}
zR_{x}=zR_{(x_{1},x_{2})}=\llbracket z,x_{1},x_{2}\rrbracket ,
\end{equation}
and
\begin{equation}
D_{x,y}=\left[ R_{x},R_{y}\right] =R_{x}R_{y}-R_{y}R_{x},
\end{equation}
we want to know if the ternary version of the $ D_{x,y}$-identity,
\begin{equation}
D_{x,y}\ \llbracket z_{1},z_{2},z_{3}\rrbracket  =\llbracket
D_{x,y}\left( z_{1}\right) ,z_{2},z_{3}\rrbracket +\llbracket z_{1},D_{x,y}\left(
z_{2}\right) ,z_{3}\rrbracket +\llbracket z_{1},z_{2},D_{x,y}\left( z_{3}\right)
\rrbracket
\label{ternary derivation}
\end{equation}
holds.

Before answering this question, let us observe some immediate properties of (\ref{ternary derivation}). Indeed, it is straightforward that each linear operator $D_{(x_{1},x_{2}),(y_{1},y_{2})}$ is also linear in each $x_{i}$ and in each $y_{i}$. Further, we have the following symmetry properties:
\begin{equation*}
D_{(x_{1},x_{2}),(y_{1},y_{2})}=D_{(x_{1},x_{2}),(y_{2},y_{1})}=D_{(x_{2},x_{1}),(y_{1},y_{2})}=D_{(x_{2},x_{1}),(y_{2},y_{1})}.
\end{equation*}
Finally, it is also obvious that:
\begin{equation*}
D_{x,y}=-D_{y,x} \text{ and } D_{x,x}=0.
\end{equation*}

The following result solves the above problem.

\begin{Th}
The ternary algebra $\mathcal{V}_{f,g,h}$ is a ternary Jordan algebra if $f,\ g$ and $h$ are identically zero. In the opposite case, $\mathcal{V}_{f,g,h}$ is not a ternary Jordan algebra with the following exceptions:
\begin{itemize}
\item $\mathcal{V}_{0,0,h}$, if $char \left(\mathbb{F}\right) = 3$ and $\dim \mathbb{V} =1$;\\
\item $\mathcal{V}_{0,g,0}$, if $char \left(\mathbb{F}\right) = 2$ and $\dim \mathbb{V} =1$;\\
\item $\mathcal{V}_{f,0,h}$, if $char \left(\mathbb{F}\right) = 2$;\\
\item $\mathcal{V}_{f,g,h}$, if $char \left(\mathbb{F}\right) = 2$ and $\dim \mathbb{V} =1$.\\
\end{itemize}
\end{Th}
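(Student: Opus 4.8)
\begin{Proof}[Proof (outline).]
The multiplication (\ref{general-mult}) is totally commutative by construction --- this is (\ref{tot-sym}) --- so the only thing that has to be checked is the $D_{x,y}$-identity (\ref{ternary derivation}). Both of its sides are multilinear in the seven arguments $x_{1},x_{2},y_{1},y_{2},z_{1},z_{2},z_{3}$, hence it suffices to verify (\ref{ternary derivation}) when each of these runs over the basis $\{1\}\cup\mathcal{B}$ of $\mathbb{F}\oplus\mathbb{V}$. Furthermore, the symmetries $D_{(x_{1},x_{2}),(y_{1},y_{2})}=D_{(x_{2},x_{1}),(y_{1},y_{2})}=D_{(x_{1},x_{2}),(y_{2},y_{1})}$ together with $D_{x,y}=-D_{y,x}$ and $D_{x,x}=0$ reduce the pairs $(x,y)$ that must be examined to $x\in\{(1,1),(1,b_{i}),(b_{i},b_{j})\}$, $i\le j$ (and likewise for $y$).

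First I would make the right multiplications explicit. Evaluating (\ref{general-mult}) on $z=\alpha+v$ gives
$$zR_{(b_{i},b_{j})}=\bigl(\alpha\delta_{ij}+g(v,b_{i},b_{j})\bigr)+\delta_{ij}v+h(v,b_{j})b_{i}+h(v,b_{i})b_{j},$$
with similar but simpler formulas for $R_{(1,b_{j})}$ and $R_{(1,1)}$. Composing two such operators and subtracting yields a closed expression for each $D_{x,y}$ in terms of the structure constants $\delta_{ij}$, $\delta_{ijk}$. Since $f$, $g$, $h$ are diagonal in the basis $\mathcal{B}$, substituting these expressions into (\ref{ternary derivation}) turns the $D_{x,y}$-identity into a finite list of scalar conditions, each of the form ``(an integer)$\,\cdot\,$(a product of Kronecker deltas)$\,=0$''. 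The integers come from collecting like terms, and this is exactly the point at which $char(\mathbb{F})$ enters.

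Then the analysis splits. If $f=g=h=0$, one checks that every right multiplication has the form $R_{(a,b)}=c\cdot\mathrm{id}+P_{w}$, where $c\in\mathbb{F}$ depends on the scalar parts of $a,b$, $w\in\mathbb{V}$, and $P_{w}(\alpha+v)=\alpha w$; since $P_{w}P_{w'}=0$ this forces $D_{x,y}=0$ for all $x,y$, so (\ref{ternary derivation}) holds trivially and $\mathcal{V}_{0,0,0}$ is a ternary Jordan algebra. If not all of $f,g,h$ vanish, I would, for every configuration (of which of $f,g,h$ are nonzero, together with $char(\mathbb{F})$ and $\dim\mathbb{V}$) not among the four listed exceptions, exhibit a concrete choice of basis arguments for which one of the scalar conditions above reads $m\cdot\xi=0$ with $\xi\neq0$ and $m$ invertible in $\mathbb{F}$, thereby obstructing the identity; for instance, when only $h\neq0$ and $\dim\mathbb{V}=1$, taking all $z_{i}$ equal to a unit vector $e\in\mathbb{V}$ produces the conditions $6e=0$ and $9e=0$, which force $char(\mathbb{F})=3$, while in dimension $\geq 2$ a different choice forces $h=0$. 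Finally, for each of the four exceptional families one verifies directly that, in the stated characteristic, all the surviving scalar conditions hold --- in several of these cases this is immediate because every $D_{x,y}$ in fact vanishes there (e.g. for $\mathcal{V}_{0,0,h}$, resp. $\mathcal{V}_{f,0,h}$, with $\dim\mathbb{V}=1$ the unique a priori nonzero $D_{x,y}$ becomes $0$ once $char(\mathbb{F})=3$, resp. $2$).

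The bulk of the work, and the main obstacle, is the bookkeeping: enumerating the cases (which of the arguments equal $1$ and which equal a $b_{i}$, and which indices coincide) and tracking the integer coefficients carefully enough to isolate \emph{exactly} the four exceptional families, rather than a larger or a smaller collection. Once the closed forms of the operators $D_{x,y}$ are available, each individual case is a short computation; there are simply many of them.
\end{Proof}
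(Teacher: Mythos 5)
Your strategy is the same as the paper's: total commutativity is automatic from the definition, multilinearity reduces the $D_{x,y}$-identity to basis arguments, the symmetries of $D_{x,y}$ cut down the pairs $(x,y)$ to be tested, and the rest is a case split over which of $f,g,h$ vanish, $char(\mathbb{F})$, and $\dim\mathbb{V}$. Your treatment of $\mathcal{V}_{0,0,0}$ is actually cleaner than the paper's: writing $R_{(a,b)}=c\cdot\mathrm{id}+P_{w}$ with $P_{w}(\alpha+v)=\alpha w$ and observing $P_{w}P_{w'}=0$ gives $D_{x,y}=0$ at once, whereas the paper just evaluates a double product.

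The problem is that everything after that is a promise rather than a proof. The entire content of the theorem is the exact list of four exceptional families, and that list can only be certified by actually producing, for each of the seven configurations of $(f,g,h)$, the obstruction in the non-exceptional characteristics/dimensions and the full verification in the exceptional ones; you defer all of this to ``bookkeeping.'' Moreover, the one sample computation you do give is off: for $\mathcal{V}_{0,0,h}$ with $\dim\mathbb{V}=1$ and $D=D_{(1,b),(b,b)}$ one has $D(b)=0$ and $\llbracket b,b,b\rrbracket=3b$, so taking all $z_{i}=b$ yields $0=0$ and no condition at all. The conditions $6b=0$ and $9b=0$ that force $char(\mathbb{F})=3$ come instead from $(z_{1},z_{2},z_{3})=(1,1,1)$ and $(1,b,b)$ (the paper uses the first of these). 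This is exactly the kind of slip that the undone case analysis is supposed to guard against, and it is why the outline cannot be accepted as a proof that the exceptional list is precisely the one stated: you would need to exhibit the explicit counterexamples in higher dimension (e.g.\ $D_{(1,b_{1}),(b_{1},b_{2})}=e_{13}$ killing $\mathcal{V}_{0,0,h}$ for $n>1$ even in characteristic $3$, and $D_{(1,b_{1}),(b_{1},b_{1})}$ killing $\mathcal{V}_{0,g,0}$ and $\mathcal{V}_{f,g,h}$ for $n>1$ in characteristic $2$) and the positive verification for $\mathcal{V}_{f,0,h}$ in characteristic $2$ for all $n$, as the paper does.
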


\begin{proof}
First, we will prove that $\mathcal{V}_{0,0,0}$ is a ternary Jordan algebra.

Obviously, for
$x=(\alpha_x+v_x, \beta_x+u_x)$, $y=(\alpha_y+v_y, \beta_y+u_y)$ and
 any $\alpha \in \mathbb{F}, z \in \mathbb{V}$,  we have:

$\llbracket \llbracket \alpha+z, \alpha_x+v_x, \beta_x+u_x \rrbracket, \alpha_y+v_y, \beta_y+u_y\rrbracket=$
$$=\alpha(\alpha_x\beta_x\alpha_y\beta_y + \alpha_x\beta_x\alpha_y u_y+\alpha_x\beta_x\beta_y v_y+\alpha_y\beta_y\alpha_xu_x+\alpha_y\beta_y\beta_xv_x)+\alpha_x\beta_x\alpha_y\beta_y z.$$
It is easy to see that the operator $D_{x,y}$ is identically zero in the algebra $\mathcal{V}_{0,0,0}$ and we have a ternary Jordan algebra.

The second part of the theorem has seven cases:
\begin{equation*}
(f\neq0,g=0,h=0), \ (f=0,g\neq0,h=0), \ \ldots, \ (f\neq0,g\neq0,h\neq0).
\end{equation*}
Below we will only consider those which lead to ternary Jordan algebras in modular characteristic, since the proof for the re\-mai\-ning ones is similar. In each, case, we will consider that $\dim \mathbb{V} =n$ is arbitrary, particularizing it whenever necessary.

{\bf I.} $(f=0,\ g=0,\ h\neq 0)$. In this case, (\ref{general-mult}) reduces to
\begin{equation*}
\llbracket \alpha_{1}+v_{1},\alpha _{2}+v_{2},\alpha _{3}+v_{3}\rrbracket =\alpha _{1}\alpha_{2}\alpha _{3} + \left( \alpha _{2}\alpha _{3}+h\left( v_{2},v_{3}\right) \right)
v_{1}+\left( \alpha _{1}\alpha _{3}+h\left( v_{1},v_{3}\right) \right)
v_{2}+\left( \alpha _{1}\alpha _{2}+h\left( v_{1},v_{2}\right) \right) v_{3}.
\end{equation*}

Let $\mathcal{S}=\left\langle 1,b\right\rangle _{\mathbb{F} }$ be a subalgebra of $\mathcal{V}_{0,0,h}$.
The multiplication table for the basis elements is given by:
\begin{equation*}
\text{(i) }\llbracket 1,1,1\rrbracket=1,\quad \text{(ii) }\llbracket 1,1,b\rrbracket=b,\quad \text{(iii) }\llbracket 1,b,b\rrbracket=0\quad \text{and}\quad \text{(iv) } \llbracket b,b,b\rrbracket=3b.
\end{equation*}

Thus, with respect to that basis, we have:
\begin{equation*}
R_{(1,1)}=E,\ R_{(1,b)}=\left(
\begin{array}{cc}
0 & 1 \\
0 & 0
\end{array}
\right)
\text{ and } R_{(b,b)}=\left(
\begin{array}{cc}
0 & 0 \\
0 & 3
\end{array}
\right).
\end{equation*}
Recalling the properties of the operators $ D_{x,y}$, in order to verify the $ D_{x,y}$-identity it is sufficient to do it for 
\begin{equation*}
D=D_{(1,b), (b,b)}=R_{(1,b)}R_{(b,b)}-R_{(b,b)}R_{(1,b)}=
\left(
\begin{array}{cc}
0 & 3 \\
0 & 0
\end{array}
\right)=3e_{12}.
\end{equation*}
Now, it is clear that $D=0$ if $char \left(\mathbb{F}\right) = 3$, and $\mathcal{S}$ is a ternary Jordan algebra. Thus, the same will occur if $n=1$.
So, admit that $char \left(\mathbb{F}\right) \neq 3$. Hereinafter, $LHS_{D}$ (respectively, $RHS_{D}$) denotes the left hand (resp., right hand) side of (\ref{ternary derivation}). It is not difficult to see that, concerning (i), we have  
\begin{equation*}
LHS_{D}=D\ \llbracket 1,1,1\rrbracket =3b \text{, while } RHS_{D}=3\llbracket D( 1),1,1\rrbracket=9b.
\end{equation*}
Therefore 
 $\mathcal{S}$ is not a ternary Jordan algebra, neither $\mathcal{V}_{0,0,h}$.

Consider again $char \left(\mathbb{F}\right) = 3$, with 
$\mathcal{V}_{0,0,h}=\left\langle 1,b_{1},...,b_{n}\right\rangle _{\mathbb{F} }$, $n>1$. Then 
\begin{equation*}
\text{(i) }\llbracket 1,1,1\rrbracket=1,\quad \text{(ii) }\llbracket 1,1,b_{i}\rrbracket=b_{i},\quad \text{(iii) }\llbracket 1,b_{i},b_{i}\rrbracket=0,  \quad \text{(iv) }\llbracket b_{i},b_{i},b_{i}\rrbracket=0, \quad \text{(v) }\llbracket 1,b_{i},b_{j}\rrbracket=0, \ (i\neq j) , \ 
\end{equation*}
$$\text{(vi) }\llbracket b_{i},b_{i},b_{j}\rrbracket=b_{j} \ (i\neq j)\quad \text{and}\quad \text{(vii) }   \llbracket b_{i},b_{j},b_{k}\rrbracket=0 \text{ ($i,j,k$ pairwise different)}.$$
Then, with respect to the considered basis, we have
\begin{equation*}
R_{(1,1)}=E,\ R_{(1,b_{i})}=e_{1,i+1}, \ R_{(b_{i},b_{i})}=e_{j+1,j+1}
\text{ and } R_{(b_{i},b_{j})}=e_{i+1,j+1}+e_{j+1,i+1} \ (i\neq j).
\end{equation*}
Taking \textit{e.g.} $b_{1}$ and $b_{2}$ (it would be similar for arbitrary choices of $b_{i}$ and $b_{j}$), 
\begin{equation*}
D=D_{(1,b_{1}), (b_{1},b_{2})}=e_{13}
\end{equation*}
so we have $D(1)=b_{2}$, $D\left(b_{1}\right)=D\left(b_{2}\right)=0$.  Now, concerning the product (i), it is easy to observe that  
\begin{equation*}
LHS_{D}=D\ \llbracket 1,1,1 \rrbracket =b_{2}, \text{ while } RHS_{D}=3\llbracket D( 1),1,1 \rrbracket=0.
\end{equation*}

{\bf II.} $(f=0,\ g\neq 0,\ h=0).$ Under these conditions, (\ref{general-mult}) reduces to
\begin{equation*}
\llbracket \alpha_{1}+v_{1},\alpha _{2}+v_{2},\alpha _{3}+v_{3}\rrbracket = 
\left(\alpha _{1}\alpha
_{2}\alpha _{3}+g\left( v_{1},v_{2},v_{3}\right) \right)+  \alpha _{2}\alpha _{3} v_{1}+\alpha _{1}\alpha _{3}
v_{2}+\alpha _{1}\alpha _{2}v_{3}.
\end{equation*}
Similarly to case I, let $\mathcal{S}=\left\langle 1,b\right\rangle _{\mathbb{F} }$ be a subalgebra of $\mathcal{V}_{0,g,0}$. The multiplication table for the basis elements is given by:
\begin{equation*}
\text{(i) }\llbracket 1,1,1\rrbracket=1,\quad \text{(ii) }\llbracket 1,1,b\rrbracket=b,\quad \text{(iii) }\llbracket 1,b,b\rrbracket=0\quad \text{and}\quad \text{(iv) } \llbracket b,b,b\rrbracket=1.
\end{equation*}
Thus, with respect to that basis, we have:
\begin{equation*}
R_{(1,1)}=E,\ R_{(1,b)}=\left(
\begin{array}{cc}
0 & 1 \\
0 & 0
\end{array}
\right)
\text{ and } R_{(b,b)}=\left(
\begin{array}{cc}
0 & 0 \\
1 & 0
\end{array}
\right).
\end{equation*}
Henceforth, in order to verify the $ D_{x,y}$-identity it is sufficient to do it for 
\begin{equation*}
D=D_{(1,b),(b,b)}=R_{(1,b)}R_{(b,b)}-R_{(b,b)}R_{(1,b)}=e_{11}-e_{22}.
\end{equation*}
Thus, $D(1)=1$ and $D(b)=-b$. Checking the $ D_{x,y}$-identity in the four cases of the multiplication table, it is possible to observe that in case (iii) that identity is always satisfied, while in the remaining ones it will only hold if $char \left(\mathbb{F}\right) = 2$. So, under this condition, $\mathcal{S}$ will be a ternary Jordan algebra and the same will happen with  $\mathcal{V}_{0,g,0}$ if $\dim \mathbb{V}=1$.

Consider now $char \left(\mathbb{F}\right) = 2$ and $\dim \mathbb{V}=2$. Take $\mathcal{V}_{0,g,0}=\left\langle 1,b_{1},b_{2} \right\rangle _{\mathbb{F} }$, with the following multiplication table
\begin{equation*}
\llbracket 1,1,1\rrbracket=1,\quad \llbracket 1,1,b_{1}\rrbracket=b_{1},\quad \llbracket 1,1,b_{2}\rrbracket=b_{2} ,\quad \llbracket 1,b_{1},b_{1}\rrbracket=\llbracket 1,b_{2},b_{2}\rrbracket=0,\quad  \llbracket b_{1},b_{1},b_{1}\rrbracket=\llbracket b_{2},b_{2},b_{2}\rrbracket=1
\end{equation*}
\begin{equation*}
\text{and } \llbracket 1,b_{1},b_{2}\rrbracket =\llbracket b_{1},b_{1},b_{2}\rrbracket=\llbracket b_{2},b_{2},b_{1}\rrbracket = 0.
\end{equation*}
Taking $D=D_{(1,b_{1}), (b_{1},b_{1})}$, we obtain: $D(1)=1$, $D(b_{1})=-b_{1}$ and $D(b_{2})=0$. Therefore, $LHS_{D}=1$  while $ RHS_{D}=0$. 

Thus,  $\mathcal{V}_{0,g,0}$ is a ternary Jordan algebra only if $char \left(\mathbb{F}\right) = 2$ and $\dim \mathbb{V} =1$.

{\bf III.} $(f\neq 0,\ g = 0,\ h\neq 0)$. In this case, (\ref{general-mult}) reduces to
\begin{equation*}
\llbracket \alpha_{1}+v_{1},\alpha _{2}+v_{2},\alpha _{3}+v_{3}\rrbracket = \left( \alpha _{1}f\left( v_{2},v_{3}\right) +\alpha _{2}f\left(
v_{1},v_{3}\right) +\alpha _{3}f\left( v_{1},v_{2}\right)+\alpha _{1}\alpha
_{2}\alpha _{3} \right)
\end{equation*}
$$+\left( \alpha _{2}\alpha _{3}+h\left( v_{2},v_{3}\right) \right)
v_{1}+\left( \alpha _{1}\alpha _{3}+h\left( v_{1},v_{3}\right) \right)
v_{2}+\left( \alpha _{1}\alpha _{2}+h\left( v_{1},v_{2}\right) \right) v_{3},$$
Being $\mathcal{S}=\left\langle 1,b\right\rangle _{\mathbb{F} }$ a subalgebra of $\mathcal{V}_{f,0,h}$, the multiplication table for the basis elements is given by:
\begin{equation*}
\text{(i) }\llbracket 1,1,1\rrbracket=1,\quad \text{(ii) }\llbracket 1,1,b\rrbracket=b,\quad \text{(iii) }\llbracket 1,b,b\rrbracket=1\quad \text{and}\quad \text{(iv) } \llbracket b,b,b\rrbracket=3b.
\end{equation*}
Thus, with respect to that basis, we have:
\begin{equation*}
R_{(1,1)}=E,\ R_{(1,b)}=\left(
\begin{array}{cc}
0 & 1 \\
1 & 0
\end{array}
\right)
\text{ and } R_{(b,b)}=\left(
\begin{array}{cc}
1 & 0 \\
0 & 3
\end{array}
\right).
\end{equation*}
Now, in order to verify the $ D_{x,y}$-identity it is sufficient to do it for 
\begin{equation*}
D=D_{(1,b),(b,b)}=R_{(1,b)}R_{(b,b)}-R_{(b,b)}R_{(1,b)}=
\left(
\begin{array}{cc}
0 & -2 \\
2 & 0
\end{array}
\right)=2\left(e_{21}-e_{12}\right).
\end{equation*}
Thus, $D(1)=-2b$ and $D(b)=2$. Checking the $ D_{x,y}$-identity in the four cases of the multiplication table, it is possible to observe that in cases (iii) and (iv) that identity always holds, while for (i) and (ii) it will be verified only if $char \left(\mathbb{F}\right) = 2$. So, under this hypothesis, $\mathcal{S}$ will be a ternary Jordan algebra and the same will happen with  $\mathcal{V}_{f,0,h}$ if $\dim \mathbb{V}=1$.

Admit that $char \left(\mathbb{F}\right) = 2$ and  $\dim \mathbb{V}>1$. Take $\mathcal{V}_{f,0,h}=\left\langle 1,b_{1},...,b_{n} \right\rangle _{\mathbb{F} }$, with the following multiplication table
\begin{equation*}
\llbracket 1,1,1\rrbracket=1,\quad \llbracket 1,1,b_{i}\rrbracket=b_{i},\quad  \llbracket 1,b_{i},b_{i}\rrbracket=1,\quad  \llbracket b_{i},b_{i},b_{i}\rrbracket=b_{i} \text{ (since $char \left(\mathbb{F}\right) = 2$ )},
\end{equation*}
\begin{equation*}
\llbracket 1,b_{i},b_{j}\rrbracket = 0, \ (i\neq j), \quad \llbracket b_{i},b_{i},b_{j}\rrbracket = b_{j}, \ (i\neq j)\quad \text{and}\quad \llbracket b_{i},b_{j},b_{k}\rrbracket =0 \ (i,j,k \text{ pairwise different and }n\geq 3).
\end{equation*}
Then, with respect to the considered basis, we have
\begin{equation*}
R_{(1,1)}=R_{(b_{i},b_{i})}=E,\ R_{(1,b_{i})}=e_{1,i+1}+e_{i+1,1}
\text{ and } R_{(b_{i},b_{j})}=e_{i+1,j+1}+e_{j+1,i+1} \ (i\neq j).
\end{equation*}
In order to verify the $ D_{x,y}$-identity it is sufficient to do it for 
\begin{equation*}
D=D_{(1,b_{i}),(b_{i},b_{j})}=e_{1,j+1}-e_{j+1,1}
\end{equation*}
Then, $D(1)=b_{j}$, $D(b_{i})=0$, $D(b_{j})=-1$ ($j\neq i$) and $D(b_{k})=0$ ( for pairwise different $i,j,k$ and $n\geq 3$). Considering all possible cases of the multiplication table for elements in $\mathcal{B}$, it is not difficult to verify that the $ D_{x,y}$-identity holds. Thus, in this case $\mathcal{V}_{f,0,h}$ is a ternary Jordan algebra.

{\bf IV.} $(f\neq 0,\ g\neq 0,\ h\neq 0)$. In this case, (\ref{general-mult}) assumes its most general form. This way, considering a subalgebra $\mathcal{S}=\left\langle 1,b\right\rangle _{\mathbb{F} }$ of $\mathcal{V}_{f,g,h}$, the multiplication table for the basis elements is given by:
\begin{equation*}
\text{(i) }\llbracket 1,1,1\rrbracket=1,\quad \text{(ii) }\llbracket 1,1,b\rrbracket=b,\quad \text{(iii) }\llbracket 1,b,b\rrbracket=1\quad \text{and}\quad \text{(iv) } \llbracket b,b,b\rrbracket=1+3b.
\end{equation*}
Thus, with respect to this basis, we have:
\begin{equation*}
R_{(1,1)}=E,\ R_{(1,b)}=\left(
\begin{array}{cc}
0 & 1 \\
1 & 0
\end{array}
\right)=e_{12}+e_{21}
\text{ and } R_{(b,b)}=\left(
\begin{array}{cc}
1 & 0 \\
1 & 3
\end{array}
\right)=e_{11}+e_{21}+3e_{22}.
\end{equation*}
This way, in order to verify the $ D_{x,y}$-identity it is sufficient to do it for 
\begin{equation*}
D=D_{(1,b),(b,b)}=R_{(1,b)}R_{(b,b)}-R_{(b,b)}R_{(1,b)}=
\left(
\begin{array}{cc}
1 & 2 \\
-2 & -1
\end{array}
\right)=e_{11}-e_{22}+2\left(e_{12}-e_{21} \right).
\end{equation*}
Observe that $D(1)=1+2b$ and $D(b)=-2-b$. Concerning the above multiplication table for the basis elements, it is easy to see that  the $ D_{x,y}$-identity holds 
if $char \left(\mathbb{F}\right) = 2$. Thus $\mathcal{S}$ is not a ternary Jordan algebra unless $char \left(\mathbb{F}\right) = 2$. This justifies that  $\mathcal{V}_{f,g,h}$ is not a ternary Jordan algebra if $char \left(\mathbb{F}\right) \neq 2$. However, it will be a ternary Jordan algebra if $\dim \mathbb{V}=2$ and $char \left(\mathbb{F}\right) = 2$.

Admit now that $char \left(\mathbb{F}\right) = 2$ and $\dim \mathbb{V}=2$. Let us consider $\mathcal{V}_{f,g,h}=\left\langle 1,b_{1},b_{2}\right\rangle _{\mathbb{F} }$. Then 
\begin{equation*}
\llbracket 1,1,1\rrbracket=1,\quad \llbracket 1,1,b_{i}\rrbracket=b_{i},i=1,2,\quad  \llbracket 1,b_{i},b_{i}\rrbracket=1,i=1,2,\quad  \llbracket b_{i},b_{i},b_{i}\rrbracket=1+b_{i} ,
\end{equation*}
\begin{equation*}
\llbracket 1,b_{1},b_{2}\rrbracket = 0\quad \text{and}\quad \ \llbracket b_{i},b_{i},b_{j}\rrbracket=b_{j}, i,j=1,2,\ (i\neq j) .
\end{equation*}
Taking $D=D_{(1,b_{1}),(b_{1},b_{1})}=e_{11}-e_{22}$, we have
\begin{equation*}
D(1)=1, \ D\left(b_{1}\right)=-b_{1} \text{ and } D\left(b_{2}\right)=0.
\end{equation*}
Then $LHS_{D}=D\ \llbracket b_{2},b_{2},b_{2}\rrbracket =1$, while $RHS_{D}=0$. Thus, we will not obtain a ternary Jordan algebra if $char \left(\mathbb{F}\right) = 2$ and $\dim \mathbb{V}>1$.

The remaining $3$ cases can be proved analogously.
\end{proof}

Thus, we obtained the first examples of  ternary Jordan algebras. In the case of $\mathcal{V}_{0,0,0}$, we have a vector space $\mathbb{F} \oplus \mathbb{V}$ equipped with the following ternary multiplication:
$$\llbracket\alpha_1+v_1, \alpha_2+v_2, \alpha_3+v_3\rrbracket= \alpha_1\alpha_2\alpha_3+ \alpha_2\alpha_3v_1 +\alpha_1\alpha_3v_2+\alpha_1\alpha_2v_3,
\mbox{ where $\alpha_i \in \mathbb{F}, v_i \in \mathbb{V}$}.$$

Recall that, given a ternary algebra $\mathcal{A}$, a subalgebra of $\mathcal{A}$ is every subspace $\mathcal{S}$ of $\mathcal{A}$ such that
\begin{equation*}
\llbracket \mathcal{S},\mathcal{S},\mathcal{S}\rrbracket \subseteq \mathcal{S},
\end{equation*}
while an ideal of $\mathcal{A}$ is every subspace $\mathcal{I}$ such that
\begin{equation*}
\llbracket \mathcal{I},\mathcal{A},\mathcal{A}\rrbracket \subseteq \mathcal{I},\llbracket \mathcal{A},\mathcal{I},\mathcal{A}\rrbracket \subseteq \mathcal{I} \text{ and } \llbracket \mathcal{A},\mathcal{A},\mathcal{I}\rrbracket
\subseteq \mathcal{I}.
\end{equation*}
On the other hand, $\mathcal{A}$  is simple if it is not abelian (\textit{i.e.},  $\llbracket \mathcal{A},\mathcal{A},\mathcal{A}\rrbracket \neq \textbf{0}$) and it lacks other ideals than the trivial ones: $\textbf{0}$ and $\mathcal{A}$.

\begin{Remark} As we can see from the following part of the paper,
the ternary algebra $\mathcal{V}_{f,0,h}$ has a ternary simple Jordan subalgebra.
\end{Remark}

\begin{Lem}  
The ternary algebra $\mathcal{V}_{0,0,0}$ is not simple and
every subspace of $\mathbb{V}$ is an ideal of  $\mathcal{V}_{0,0,0}$.
Further, if  $\mathcal{I}$ is a proper ideal of $\mathcal{V}_{0,0,0}$, then $\mathcal{I}$ is a subspace of $\mathbb{V}$. 
Among the modular ternary Jordan algebras obtained in the previous theorem, only the following are simple:
\begin{itemize}
\item $\mathcal{V}_{0,g,0}$, with $char \left(\mathbb{F}\right) = 2$ and $\dim \mathbb{V} =1$;
\item $\mathcal{V}_{f,0,h}$, with $char \left(\mathbb{F}\right) = 2$ and $\dim \mathbb{V}  > 1$.
\end{itemize}
\end{Lem}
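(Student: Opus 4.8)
The plan is to dispose of $\mathcal{V}_{0,0,0}$ directly and then work through the finitely many modular exceptions of the previous theorem one by one, exhibiting a proper nonzero ideal in each non-simple case and ruling one out in each simple case; note first that, by total commutativity, the three defining conditions for an ideal collapse to the single condition $\llbracket\mathcal{I},\mathcal{A},\mathcal{A}\rrbracket\subseteq\mathcal{I}$. For $\mathcal{V}_{0,0,0}$, the multiplication $\llbracket\alpha_1+v_1,\alpha_2+v_2,\alpha_3+v_3\rrbracket=\alpha_1\alpha_2\alpha_3+\alpha_2\alpha_3v_1+\alpha_1\alpha_3v_2+\alpha_1\alpha_2v_3$ shows immediately that $\llbracket w,a,b\rrbracket\in\mathbb{V}$ (indeed it equals $\alpha_a\alpha_b w$) whenever $w\in\mathbb{V}$, and hence that every subspace $\mathcal{S}\subseteq\mathbb{V}$ is an ideal; in particular $\mathbb{V}$ is a proper nonzero ideal, so $\mathcal{V}_{0,0,0}$ is not simple. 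Conversely, if a proper ideal $\mathcal{I}$ contained some $a=\alpha+v$ with $\alpha\neq0$, then $\llbracket a,1,b_i\rrbracket=\alpha b_i\in\mathcal{I}$ for each $i$ would give $\mathbb{V}\subseteq\mathcal{I}$, hence $\alpha\cdot1=a-v\in\mathcal{I}$ and $\mathcal{I}=\mathcal{V}_{0,0,0}$; so every proper ideal is a subspace of $\mathbb{V}$.

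For the non-simple modular exceptions I would simply display an ideal. In $\mathcal{V}_{0,0,h}$ with $char\left(\mathbb{F}\right)=3$ and $\dim\mathbb{V}=1$ (basis $\{1,b\}$), the products $\llbracket b,1,1\rrbracket=b$, $\llbracket b,1,b\rrbracket=0$ and $\llbracket b,b,b\rrbracket=3b=0$ make $\langle b\rangle$ a proper nonzero ideal. In $\mathcal{V}_{f,0,h}$ with $char\left(\mathbb{F}\right)=2$ and $\dim\mathbb{V}=1$, a short computation gives $\llbracket 1+b,z_1,z_2\rrbracket\in\langle 1+b\rangle$ for all $z_1,z_2$, so $\langle 1+b\rangle$ is a proper nonzero ideal. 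The last modular algebra, $\mathcal{V}_{f,g,h}$ with $char\left(\mathbb{F}\right)=2$ and $\dim\mathbb{V}=1$, is not genuinely new: the linear map fixing $1$ and sending $b$ to $1+b$ carries its multiplication table to that of $\mathcal{V}_{0,g,0}$ with $char\left(\mathbb{F}\right)=2$, $\dim\mathbb{V}=1$, so it is isomorphic to that algebra and is covered by the simple case treated next.

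For the two simple modular exceptions one shows that every nonzero ideal is the whole algebra. In $\mathcal{V}_{0,g,0}$ with $char\left(\mathbb{F}\right)=2$, $\dim\mathbb{V}=1$: if $0\neq x=\alpha+\beta b$ lies in an ideal $\mathcal{I}$, then $\llbracket x,b,b\rrbracket=\beta g(b,b,b)=\beta$ handles the subcase $\beta\neq0$, while $\beta=0$ gives $x=\alpha\in\mathcal{I}$ directly; either way $1\in\mathcal{I}$, hence $b=\llbracket1,1,b\rrbracket\in\mathcal{I}$ and $\mathcal{I}=\mathcal{V}_{0,g,0}$, which together with $\llbracket1,1,1\rrbracket=1\neq0$ proves simplicity. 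In $\mathcal{V}_{f,0,h}$ with $char\left(\mathbb{F}\right)=2$ and $\dim\mathbb{V}=n>1$ (basis $1,b_1,\ldots,b_n$): starting from $0\neq x=\alpha+v\in\mathcal{I}$ with $v=\sum c_i b_i$, I first push $\mathcal{I}$ into $\mathbb{V}$ — if $v=0$ then $x$ is a nonzero scalar, so $1\in\mathcal{I}$; if $v\neq0$, choose $k$ with $c_k\neq0$ and any $j\neq k$, so that $\llbracket x,b_k,b_j\rrbracket=c_j b_k+c_k b_j$ is a nonzero vector in $\mathbb{V}\cap\mathcal{I}$. Then for any nonzero $w=\sum c_i b_i\in\mathbb{V}\cap\mathcal{I}$ with $c_j\neq0$ the identity $\llbracket1,w,b_j\rrbracket=f(w,b_j)\cdot1=c_j\cdot1$ yields $1\in\mathcal{I}$, whence $b_i=\llbracket1,1,b_i\rrbracket\in\mathcal{I}$ for all $i$ and $\mathcal{I}=\mathcal{V}_{f,0,h}$; since $\llbracket1,1,1\rrbracket=1\neq0$, the algebra is simple.

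The only step I expect to be genuinely delicate is the last one: in characteristic $2$ the obvious products tend to reproduce their own argument (for example $\llbracket x,b_j,b_j\rrbracket=x$ and $\llbracket b_i,b_i,b_i\rrbracket=b_i$), so the argument has to descend into $\mathbb{V}$ through the mixed products $\llbracket x,b_k,b_j\rrbracket$ with $k\neq j$ and only then recover the unit through $\llbracket1,w,b_j\rrbracket=c_j\cdot1$; deriving these two identities from the general multiplication is the bulk of the (routine) computation. One also has to remember not to overlook the extra modular algebra $\mathcal{V}_{f,g,h}$ with $char\left(\mathbb{F}\right)=2$ and $\dim\mathbb{V}=1$, which is dealt with by the isomorphism noted above.
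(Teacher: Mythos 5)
Your overall strategy mirrors the paper's: exhibit an explicit proper ideal in each non-simple case and show that every nonzero ideal contains $1$ in each simple case. Most of your computations check out, and your treatment of $\mathcal{V}_{f,0,h}$ with $char\left(\mathbb{F}\right)=2$ and $\dim\mathbb{V}=n>1$ --- descending into $\mathbb{V}$ via $\llbracket x,b_k,b_j\rrbracket=c_jb_k+c_kb_j$ and then recovering the unit via $\llbracket 1,w,b_j\rrbracket=f(w,b_j)\cdot 1$ --- is cleaner and genuinely more general than the paper's, which writes out only $n=2$ and asserts that the general case is analogous.

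There is, however, a serious problem with your handling of $\mathcal{V}_{f,g,h}$ with $char\left(\mathbb{F}\right)=2$ and $\dim\mathbb{V}=1$: you conclude that it is isomorphic to the simple algebra $\mathcal{V}_{0,g,0}$ and hence simple, whereas the lemma asserts that among the modular algebras \emph{only} $\mathcal{V}_{0,g,0}$ ($char=2$, $\dim\mathbb{V}=1$) and $\mathcal{V}_{f,0,h}$ ($char=2$, $\dim\mathbb{V}>1$) are simple, i.e.\ it classifies $\mathcal{V}_{f,g,h}$ as non-simple. Your argument therefore does not prove the statement --- it contradicts one of its assertions --- and you never acknowledge the conflict. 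Having checked it, your isomorphism ($1\mapsto 1$, $b\mapsto 1+b$) is in fact correct: it carries the table $\llbracket 1,1,1\rrbracket=1$, $\llbracket 1,1,b\rrbracket=b$, $\llbracket 1,b,b\rrbracket=1$, $\llbracket b,b,b\rrbracket=1+b$ onto that of $\mathcal{V}_{0,g,0}$, and a direct search confirms that $\mathcal{V}_{f,g,h}$ has no one-dimensional ideal: the only candidate surviving the condition $\llbracket x,1,b\rrbracket\in\left\langle x\right\rangle$ is $\left\langle 1+b\right\rangle$, and $\llbracket 1+b,b,b\rrbracket=1+(1+b)=b\notin\left\langle 1+b\right\rangle$. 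The paper's own proof of this sub-case, which asserts that it is ``perfectly similar'' to $\mathcal{V}_{f,0,h}$ with $\dim\mathbb{V}=1$ (where $\left\langle 1+b\right\rangle$ \emph{is} an ideal), fails for exactly this reason. So the defect in your proposal is not computational but logical: as a proof of the lemma as stated it cannot succeed, because that part of the statement is false; a careful write-up must either flag the discrepancy explicitly or restate the classification (the simple modular algebras are $\mathcal{V}_{0,g,0}$, $\mathcal{V}_{f,g,h}$ with $char=2$, $\dim\mathbb{V}=1$, and $\mathcal{V}_{f,0,h}$ with $char=2$, $\dim\mathbb{V}>1$). As written, you silently prove something incompatible with what you were asked to prove.
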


\begin{Proof}
It is easy to see that, for every subspace $\mathbb{U}$ of $\mathbb{V}$, 
$$\llbracket \mathbb{U}, \mathbb{F}\oplus \mathbb{V}, \mathbb{F}\oplus \mathbb{V} \rrbracket = \llbracket \mathbb{U}, \mathbb{F}, \mathbb{F} \rrbracket =\mathbb{U}.$$
On the other hand, 
let $\mathcal{I}$ be an ideal of $\mathcal{V}_{0,0,0}$.
If $1+v \in \mathcal{I}$, then for every $z \in \mathcal{I}$,  $ \llbracket 1+v,1,z \rrbracket=z \in \mathcal{I}$
holds and either $\mathcal{I}$ is a subspace of $\mathbb{V}$ or $\mathcal{I}=\mathbb{F}\oplus \mathbb{V}$.

Let us consider the ternary Jordan algebra  $\mathcal{V}_{0,0,h}=\left\langle 1,b\right\rangle _{\mathbb{F} }$, with $char \left(\mathbb{F}\right) = 3$. The multiplication table for the basis elements is given by:
\begin{equation*}
\text{(i) }\llbracket 1,1,1\rrbracket=1,\quad \text{(ii) }\llbracket 1,1,b\rrbracket=b,\quad \text{(iii) }\llbracket 1,b,b\rrbracket=0\quad \text{and}\quad \text{(iv) } \llbracket b,b,b\rrbracket=0.
\end{equation*}
It is clear that $\mathcal{I}=\left\langle b\right\rangle _{\mathbb{F} }$ is an ideal of $\mathcal{V}_{0,0,h}$ and so this ternary Jordan algebra is not simple.

Let us consider the ternary Jordan algebra  $\mathcal{V}_{0,g,0}=\left\langle 1,b\right\rangle _{\mathbb{F} }$, with $char \left(\mathbb{F}\right) = 2$. The multiplication table for the basis elements is given by:
\begin{equation*}
\text{(i) }\llbracket 1,1,1\rrbracket=1,\quad \text{(ii) }\llbracket 1,1,b\rrbracket=b,\quad \text{(iii) }\llbracket 1,b,b\rrbracket=0\quad \text{and}\quad \text{(iv) } \llbracket b,b,b\rrbracket=1.
\end{equation*}
Admit that $\mathcal{I}$ is an ideal of $\mathcal{V}_{0,g,0}$ and consider $x=\alpha\ 1 + \beta b \in \mathcal{I} \backslash \{0\} $. It is clear from the multiplication table that if $1\in \mathcal{I}$ or $b\in \mathcal{I}$ then $\mathcal{I}=\mathcal{V}_{0,g,0}$. This will happen if $\beta=0$ or $\alpha=0$, respectively. So, we will suppose that none of the scalars is zero.  Then 
\begin{equation*}
\llbracket x,1,b\rrbracket= \alpha b \in  \mathcal{I},
\end{equation*}
and so $ b \in  \mathcal{I}$ leading to   $\mathcal{I}=\mathcal{V}_{0,g,0}$ according to what has been written above. Thus, $\mathcal{V}_{0,g,0}$ is simple.

Consider now $\mathcal{V}_{f,0,h}=\left\langle 1,b_{1},...,b_{n} \right\rangle _{\mathbb{F} }$, with $char \left(\mathbb{F}\right) = 2$ and  $n=\dim \mathbb{V}$. The proof will be divided in two cases: $n=1$ and $n>1$.  Recall that, when $n=1$, the multiplication table with respect to the basis $\{1,b\}$ is given by
\begin{equation*}
\llbracket 1,1,1\rrbracket=1,\quad \llbracket 1,1,b\rrbracket=b,\quad  \llbracket 1,b,b\rrbracket=1\quad \text{and} \quad \llbracket b,b,b\rrbracket = b.
\end{equation*}
It is an easy task to observe that $\mathcal{I}=\left\langle 1+b\right\rangle _{\mathbb{F} }$ is an ideal of $\mathcal{V}_{f,0,h}$, whence this ternary Jordan algebra is not simple.  Admit now that $n>1$. For the sake of simplicity, we will prove this case considering $n=2$, for it can be generalized for an arbitrary value of $n\geq 2$. The multiplication table for the basis elements of $\mathcal{V}_{f,0,h}=\left\langle 1,b_{1},b_{2}\right\rangle _{\mathbb{F} }$ is given by:
$$ \llbracket 1,1,1\rrbracket=1,\quad \llbracket 1,1,b_{i}\rrbracket=b_{i},\ i=1,2,\quad  \llbracket 1,b_{i},b_{i}\rrbracket=1,\ i=1,2,$$
$$\llbracket b_{i},b_{i},b_{i}\rrbracket=b_{i},\ i=1,2, \quad \llbracket 1,b_{1},b_{2}\rrbracket=0\quad \text{and}\quad  \llbracket b_{i},b_{i},b_{j}\rrbracket=b_{j},\ i,j=1,2.
$$
Let $\mathcal{I}$ be an ideal of $\mathcal{V}_{f,0,h}$. It is clear that if any of the basis elements is in $\mathcal{I}$, then $\mathcal{I}=\mathcal{V}_{f,0,h}$ and this ternary Jordan algebra will be simple. Consider $x=\alpha\ 1+\beta_{1}\ b_{1}+\beta_{2}\ b_{2}\in \mathcal{I}\backslash\{0\}$. Then
\begin{equation*}
\llbracket x,1,1\rrbracket=x,\ y=\llbracket x,1,b_{1}\rrbracket=\alpha\ b_{1}+\beta_{1}\ 1\in \mathcal{I},\ u=\llbracket x,1,b_{2}\rrbracket=\alpha\ b_{2}+\beta_{2}\ 1\in \mathcal{I},\ v=\llbracket x,b_{1},b_{2}\rrbracket=\beta_{1}\ b_{2}+\beta_{2}\ b_{1}\in \mathcal{I}.
\end{equation*}
Admit that $\alpha =0$. Then $y=\beta_{1}\ 1\in \mathcal{I}$. If we have $\beta_{1}=0$, then $x=\beta_{2}\ b_{2}\neq 0$, and it must be $b_{2}\in \mathcal{I}$. If we have $\beta_{1}\neq 0$, then $1\in \mathcal{I}$. So, admit that $\alpha \neq 0$. Then $y=\alpha\ b_{1}+\beta_{1}\ 1\neq 0$.  If $\beta_{1}=0$, we conclude that  $b_{1}\in \mathcal{I}$. On the other hand, if $\beta_{1} \neq 0$, both scalars in $y=\alpha\ b_{1}+\beta_{1}\ 1\neq 0$ will be non-zero. Admiting that $\beta_{2}=0$, then from $v=\llbracket x,b_{1},b_{2}\rrbracket=\beta_{1}\ b_{2}\in \mathcal{I}$, we get $b_{2}\in \mathcal{I}$. If $\beta_{2}\neq 0$, then from $\llbracket y,b_{1},b_{2}\rrbracket=\alpha\ b_{2}\in \mathcal{I}\backslash\{0\}$, we get $b_{2}\in \mathcal{I}$.

Concerning $\mathcal{V}_{f,g,h}$, with $char \left(\mathbb{F}\right) = 2$ and $\dim \mathbb{V} =1$, this is perfectely similar to the subcase $\dim \mathbb{V} =1$ of the previous case.

\end{Proof}

\begin{Lem} 
Let $D$ be an artbitrary derivation of $\mathcal{V}_{0,0,0},$
then
\begin{enumerate}
    \item if  $char \left(\mathbb{F}\right) \neq 2,$ then $Der(\mathcal{V}_{0,0,0})   \cong  End(\mathbb{V})^{(-)};$
    \item if  $char \left(\mathbb{F}\right)=2$ and $dim \mathbb{V}=1,$ then $Der(\mathcal{V}_{0,0,0})   \cong  End(\mathcal{V}_{0,0,0})^{(-)};$
    \item if  $char \left(\mathbb{F}\right)=2$ and $dim \mathbb{V}>1,$ then $D(\mathbb{V})\subseteq \mathbb{V}, Der|_{\mathbb{\mathbb{V}}}(\mathcal{V}_{0,0,0}) \cong  End(\mathbb{V})^{(-)}$ and $D(1)$ may be an arbitrary element of $\mathcal{V}_{0,0,0},$ where $Der|_{\mathbb{V}}(\mathcal{V}_{0,0,0})$ is the algebra of derivations of  $\mathcal{V}_{0,0,0}$ restricted on $\mathbb{V}.$
\end{enumerate}
\end{Lem}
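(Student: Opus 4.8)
The plan is to parametrize an arbitrary derivation $D$ by $D(1)=\lambda\cdot 1+w$ with $\lambda\in\mathbb{F}$, $w\in\mathbb{V}$, and $D(v)=\phi(v)\cdot 1+T(v)$ for $v\in\mathbb{V}$, where $\phi\in\mathbb{V}^{*}$ and $T\in End(\mathbb{V})$, and then to read off the constraints on $(\lambda,w,\phi,T)$ imposed by the $D_{x,y}$-identity (\ref{ternary derivation}). Since $\llbracket\cdot,\cdot,\cdot\rrbracket$ is trilinear and $D$ is linear, it suffices to test (\ref{ternary derivation}) on triples of the basis $\{1,b_{1},\dots,b_{n}\}$, and by the total commutativity of the product only four products matter: $\llbracket 1,1,1\rrbracket=1$, $\llbracket 1,1,b_{i}\rrbracket=b_{i}$, $\llbracket 1,b_{i},b_{j}\rrbracket=0$ and $\llbracket b_{i},b_{j},b_{k}\rrbracket=0$.

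First I would substitute each of these into (\ref{ternary derivation}). Keeping in mind that in $\mathcal{V}_{0,0,0}$ a product vanishes as soon as at most one of its three arguments equals $1$, the computations are short: $\llbracket 1,1,1\rrbracket=1$ gives $2\lambda\cdot 1+2w=0$; $\llbracket 1,1,b_{i}\rrbracket=b_{i}$ gives $2\lambda b_{i}=0$; $\llbracket 1,b_{i},b_{j}\rrbracket=0$ gives $\phi(b_{i})b_{j}+\phi(b_{j})b_{i}=0$ for all $i,j$; and $\llbracket b_{i},b_{j},b_{k}\rrbracket=0$ gives nothing new. A short check shows these conditions are also \emph{sufficient}: any linear $D$ whose parameters satisfy them fulfils (\ref{ternary derivation}) on every basis triple, hence everywhere.

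Then I would split into the three cases. If $char(\mathbb{F})\neq 2$, the relation $2\lambda\cdot 1+2w=0$ forces $D(1)=0$, and $\phi(b_{i})b_{j}+\phi(b_{j})b_{i}=0$ taken with $i=j$ forces $\phi\equiv 0$, hence $D(\mathbb{V})\subseteq\mathbb{V}$; since $T$ is then unconstrained, $D\mapsto D|_{\mathbb{V}}=T$ is a bijection onto $End(\mathbb{V})$. If $char(\mathbb{F})=2$ and $\dim\mathbb{V}=1$, every relation becomes vacuous (the coefficients $2$ and $3$ collapse, and the only surviving instance of $\phi(b_{i})b_{j}+\phi(b_{j})b_{i}=0$ is $2\phi(b_{1})b_{1}=0$), so $D$ may be an arbitrary linear operator on $\mathcal{V}_{0,0,0}$. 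If $char(\mathbb{F})=2$ and $\dim\mathbb{V}>1$, then for any $i\neq j$ the linear independence of $b_{i},b_{j}$ in $\phi(b_{i})b_{j}+\phi(b_{j})b_{i}=0$ gives $\phi(b_{i})=\phi(b_{j})=0$; as every index has a partner, $\phi\equiv 0$ and $D(\mathbb{V})\subseteq\mathbb{V}$, while $D(1)$ remains arbitrary and $T$ is free.

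Finally I would upgrade these vector-space descriptions to the stated Lie-algebra isomorphisms. The bracket on $Der(\mathcal{V}_{0,0,0})$ is the operator commutator, so in cases $1$ and $3$ the inclusion $D(\mathbb{V})\subseteq\mathbb{V}$ makes $D\mapsto D|_{\mathbb{V}}$ a homomorphism of Lie algebras onto $End(\mathbb{V})^{(-)}$ — injective in case $1$, and with image all of $End(\mathbb{V})$ in case $3$ (so $Der|_{\mathbb{V}}(\mathcal{V}_{0,0,0})\cong End(\mathbb{V})^{(-)}$) — while case $2$ identifies $Der(\mathcal{V}_{0,0,0})$ with $End(\mathcal{V}_{0,0,0})^{(-)}$ directly. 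I expect the only delicate points to be the bookkeeping in the four basis computations (which terms vanish, and the collapse of $2$ and $3$ in characteristic $2$) and remembering to verify the sufficiency direction, so that the three descriptions of $Der(\mathcal{V}_{0,0,0})$ come out exact rather than merely necessary.
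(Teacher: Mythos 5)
Your proposal is correct and follows essentially the same route as the paper: direct verification of the $D_{x,y}$-identity against the multiplication table of $\mathcal{V}_{0,0,0}$ on basis triples, extraction of the constraints ($2D(1)=0$ from $\llbracket 1,1,1\rrbracket$, the vanishing of the $\mathbb{F}$-component of $D|_{\mathbb{V}}$ from $\llbracket 1,b_i,b_j\rrbracket=0$), and a sufficiency check. Your explicit parametrization by $(\lambda,w,\phi,T)$ merely organizes the same computation more systematically than the paper's case-by-case treatment.
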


\begin{Proof}
Let $D$ be a derivation of the algebra $\mathcal{V}_{0,0,0}$. If $char \left(\mathbb{F}\right) \neq 2$, then it is easy to see that $D(1)=0$. Now,  given an arbitrary element $v \in \mathbb{V}$ and  writing $D(v)=v_{\mathbb{F}}+v_D, v_{\mathbb{F}}\in \mathbb{F}, v_D \in \mathbb{V}$, we have
\begin{equation*}
D(v)= D\ \llbracket v,1+v,1+v\rrbracket =D(v)+4 v_{\mathbb{F}}v
\end{equation*}
It follows that $D(\mathbb{V}) \subseteq \mathbb{V}.$
For a mapping $D\in End(\mathbb{V}),$ we consider $D$ as a linear mapping from $End(\mathcal{V}_{0,0,0}),$ such that $D(1)=0.$
Now
$$D\ \llbracket \alpha_1+v_1, \alpha_2+v_2, \alpha_3+v_3 \rrbracket =D(\alpha_1\alpha_2\alpha_3+\alpha_1\alpha_2v_3+ \alpha_1\alpha_3v_2+\alpha_2\alpha_3v_1)$$
\begin{eqnarray*}
= & \alpha_1\alpha_2D(v_3)+ \alpha_1\alpha_3D(v_2)+\alpha_2\alpha_3D(v_1) \\
= & \llbracket \alpha_1+ v_1, \alpha_2+v_2, D(v_3) \rrbracket+\llbracket \alpha_1+v_1, D(v_2), \alpha_3+v_3 \rrbracket+\llbracket D(v_1), \alpha_2+v_2, \alpha_3+v_3 \rrbracket  \\
= & \llbracket D(\alpha_1+v_1), \alpha_2+v_2, \alpha_3+v_3 \rrbracket+\llbracket \alpha_1+v_1, D(\alpha_2+v_2), \alpha_3+v_3 \rrbracket+\llbracket \alpha_1+ v_1, \alpha_2+v_2, D(\alpha_3+v_3) \rrbracket.
\end{eqnarray*}
It follows that $D$ is a derivation of $\mathcal{V}_{0,0,0}$.

Suppose that $char \left(\mathbb{F}\right) = 2$ and admit that $\mathcal{V}_{0,0,0}=\left\langle 1,b\right\rangle _{\mathbb{F} }$.  The multiplication table for the basis elements is then given by:
\begin{equation*}
\llbracket 1,1,1\rrbracket=1,\quad \llbracket 1,1,b\rrbracket=b,\quad \llbracket 1,b,b\rrbracket= \llbracket b,b,b\rrbracket=0.
\end{equation*}
Let $D\in Der\left(\mathcal{V}_{0,0,0}\right)$. It is clear that now we can have $D(1)\neq 0$. Thus, we may set $D(1)=\alpha\ 1+ \beta\ b$ and $D(b)=\alpha '\ 1+ \beta'\ b$ for some scalars $\alpha,\beta,\alpha ',\beta '$. Applying the definition of ternary derivation to the our cases of the multiplication table, it is possible to conclude that these scalars  are arbitrary. So, $D$ can be any endomorphism of $\mathcal{V}_{0,0,0}$. 

Let us consider the case $char \left(\mathbb{F}\right) = 2$ and $\dim \mathbb{V} >1$. 
Admit, for the sake of simplicity, that $\mathcal{V}_{0,0,0}=\left\langle 1,b_{1},b_{2}, \ldots, b_n \right\rangle _{\mathbb{F} }$
 The multiplication table for the basis elements is then given by:
\begin{equation*}
\llbracket 1,1,1\rrbracket=1,\quad \llbracket 1,1,b_{i}\rrbracket=b_{i},
\end{equation*}
being null all other products. Let $D\in Der\left(\mathcal{V}_{0,0,0}\right)$. We may set 
\begin{eqnarray*}
D\left(b_{i}\right)&=&\alpha_i 1+ \beta_i.
\end{eqnarray*}
From the identity
\begin{equation*}
D\left(\llbracket x,y,z\rrbracket\right)=\llbracket D(x),y,z\rrbracket+\llbracket x,D(y),z\rrbracket+\llbracket x,y,D(z)\rrbracket
\end{equation*}
applied to all possible products of the above table we get no restrictions on the scalars, with one exception: the case $\llbracket 1,b_{i},b_{j}\rrbracket=0$. Indeed, from
\begin{equation*}
D(\llbracket 1,b_{i},b_{j}\rrbracket)=\llbracket D(1),b_{i},b_{j}\rrbracket+\llbracket 1,D(b_{i}),b_{j}\rrbracket+\llbracket 1,b_{i},D(b_{j})\rrbracket
\end{equation*}
it is easy to see, that for $i\neq j,$  we obtain: $\alpha_i=0.$ Thus, $D(V)\subseteq V$ and for any element $b_i$ the image $D(b_i)$ may be an arbitrary element of $\mathbb{V}.$

In each case, the reciprocal assertion of the isomorphism is trivial.

\end{Proof}

\section{Another example of a simple ternary Jordan algebra}

\bigskip Admit that we restrict the algebra of the previous section to $\mathbb{V}$, an $n$-dimensional vector space over a field $\mathbb{F}$, with $char \left(\mathbb{F}\right) = 0$, and denote the bilinear form $h$ by $\left( .,.\right)$, with the same properties with respect to
a given basis $\mathcal{B}=\left\{ b_{1},\ldots ,b_{n}\right\} $ of $\mathbb{V}$. Consider the following ternary multiplication defined on $\mathbb{V}$:
\begin{equation}
\llbracket x,y,z \rrbracket =\left( y,z\right) x+\left( x,z\right) y+\left(
x,y\right) z.  \label{TJA-multiplication}
\end{equation}
Denote the obtained ternary algebra by $\mathbb{A}$. It is clear that (\ref{TJA-multiplication}) is a particular case of the general multiplication (\ref{general-mult}). 

Further, when $n=4$ it is interesting to observe that (\ref{TJA-multiplication}) can be seen
as a multiple of the symmetrization of the multiplication
\begin{equation*}
\left\{ x,y,z\right\} =\frac{1}{6}\left( -\left( y,z\right) x+\left(
x,z\right) y-\left( x,y\right) z+\left[ x,y,z\right] \right) ,
\end{equation*}
defined on the ternary Filippov algebra $A_{1}$ with anticommutative multiplication $[.,.,.]$ (see \cite
{PDB_etal}). Indeed, being
\begin{eqnarray*}
\left\{ x,y,z\right\} ^{(+)} &=&sym\left( \left\{ x,y,z\right\} \right) \\
&=&\left\{ x,y,z\right\} +\left\{ x,z,y\right\} +\left\{ y,x,z\right\}
+\left\{ y,z,x\right\} +\left\{ z,x,y\right\} +\left\{ z,y,x\right\} ,
\end{eqnarray*}
it is easy to see that
\begin{equation*}
\llbracket x,y,z \rrbracket =-3\left\{ x,y,z\right\} ^{(+)}.
\end{equation*}

Clearly, (\ref{TJA-multiplication}) defines a totally commutative multiplication on $\mathbb{A}$.
Further, adopting the notations $R_{x}$ and $D_{x,y}$ introduced in the previous section, now concerning the multiplication  (\ref{TJA-multiplication}) in $\mathbb{A}$, we have the following result.


\begin{Th}
$\mathbb{A}$ is a ternary Jordan algebra. \label{THM 1}
\end{Th}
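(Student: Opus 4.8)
The plan is to verify the $D_{x,y}$-identity \eqref{ternary derivation} directly for the multiplication \eqref{TJA-multiplication}, exploiting total commutativity (already established) so that only \eqref{der-Jordan} remains to be checked. First I would compute the right multiplication operator explicitly: from $\llbracket z,x,y\rrbracket = (x,y)z + (z,y)x + (z,x)y$ we read off
\begin{equation*}
R_{(x,y)} = (x,y)\,\mathrm{Id} + x\otimes y^{\flat} + y\otimes x^{\flat},
\end{equation*}
where $y^{\flat}$ denotes the linear functional $z\mapsto (z,y)$ and $x\otimes \varphi$ is the rank-one operator $z\mapsto \varphi(z)\,x$. The term $(x,y)\,\mathrm{Id}$ is central, so it drops out of any commutator; hence $D_{(x,y),(u,v)} = [\,x\otimes y^{\flat} + y\otimes x^{\flat},\ u\otimes v^{\flat} + v\otimes u^{\flat}\,]$.

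Next I would compute this commutator using the standard rule $[a\otimes\varphi,\ b\otimes\psi] = \varphi(b)\,a\otimes\psi - \psi(a)\,b\otimes\varphi$. Expanding the four brackets and collecting terms, each summand is a rank-one operator whose functional part is one of $y^{\flat}, x^{\flat}, v^{\flat}, u^{\flat}$ and whose vector part is a scalar multiple of $x,y,u,v$; the scalars are the inner products $(x,u),(x,v),(y,u),(y,v)$. The upshot is an operator of the shape $D = \sum (\text{scalar})\, a\otimes b^{\flat}$ that is manifestly skew-symmetric with respect to $(.,.)$: one checks $(Dz, w) + (z, Dw) = 0$ for all $z,w$, i.e. $D \in so(\mathbb{V},(.,.))$. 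This is the conceptual heart of the argument. Then I would invoke (or re-prove in one line) the elementary fact that \emph{every} element of $so(\mathbb{V},(.,.))$ is a derivation of $\mathbb{A}$: if $D$ is skew, then since $(.,.)$ is $D$-invariant, $D\llbracket x,y,z\rrbracket = D((y,z)x + (x,z)y + (x,y)z) = (y,z)Dx + (x,z)Dy + (x,y)Dz + ((Dy,z)+(y,Dz))x + \cdots$, and the extra terms vanish by skewness, leaving exactly $\llbracket Dx,y,z\rrbracket + \llbracket x,Dy,z\rrbracket + \llbracket x,y,Dz\rrbracket$. Combining the two observations gives $D_{x,y}\in Der(\mathbb{A})$, which is \eqref{der-Jordan}.

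I expect the only real obstacle to be bookkeeping: correctly expanding the commutator of the two sums of two rank-one operators (eight terms before cancellation) and verifying the skew-symmetry without sign errors. A clean way to suppress the bookkeeping is to observe at the outset that $R_{(x,y)}$ is \emph{self-adjoint} with respect to $(.,.)$ — indeed $(x\otimes y^{\flat})^{*} = y\otimes x^{\flat}$, so $x\otimes y^{\flat} + y\otimes x^{\flat}$ is symmetric, and $(x,y)\,\mathrm{Id}$ is obviously symmetric. The commutator of two self-adjoint operators is automatically skew-adjoint, so $D_{x,y}\in so(\mathbb{V},(.,.))$ with no computation at all; then the one-line argument of the previous paragraph finishes the proof. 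This also makes transparent why the theorem holds in every dimension and needs $char(\mathbb{F}) = 0$ only to keep $(.,.)$ honest. I would present it in this streamlined self-adjointness form.
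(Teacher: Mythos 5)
Your proof is correct, but it takes a genuinely different and more conceptual route than the paper's. The paper verifies the $D_{x,y}$-identity by exhaustive case analysis on basis elements: it reduces to four cases according to how many of $x_1,x_2,y_1,y_2$ coincide, computes $D_{x,y}(z)$ explicitly in each subcase (finding it is either $0$ or of the form $(z,b_j)b_k-(z,b_k)b_j$ up to scalar), and then expands both sides of the identity. Your argument replaces all of this with two structural observations: first, $R_{(x,y)}=(x,y)\,\mathrm{Id}+x\otimes y^{\flat}+y\otimes x^{\flat}$ is self-adjoint with respect to $(.,.)$ (indeed $(zR_{(x,y)},w)=(x,y)(z,w)+(z,y)(x,w)+(z,x)(y,w)$ is symmetric in $z$ and $w$), so any commutator $D_{x,y}$ is skew-adjoint; second, every skew-adjoint operator $D$ is a derivation of $\mathbb{A}$, because the discrepancy between the two sides of the derivation identity is exactly $\bigl((Dy,z)+(y,Dz)\bigr)x+\bigl((Dx,z)+(x,Dz)\bigr)y+\bigl((Dx,y)+(x,Dy)\bigr)z$, which vanishes by skewness. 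Both observations check out. What your approach buys: it is basis-free, works in arbitrary characteristic (skew-adjointness and the cancellation use nothing else), and it simultaneously establishes the inclusion $Inder(\mathbb{A})\subseteq so(\mathbb{V},(.,.))\subseteq Der(\mathbb{A})$, which the paper only extracts later in Section 4 (where the characterization of derivations as skew-symmetric operators, your second observation, appears as equation for $Der(\mathbb{A})$ and Theorem 9). What the paper's casework buys is the explicit formulas for the nonzero $D_{x,y}$ on basis pairs, which it reuses to show $Der(\mathbb{A})=Inder(\mathbb{A})=so(n)$; your argument would still need those small computations to get the reverse inclusion there, but for the present theorem it is a cleaner and complete proof.
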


\begin{proof}
According to the definition of ternary Jordan algebra, we must
prove that
\begin{equation}
D_{x,y}\ \llbracket z_{1},z_{2},z_{3}\rrbracket  =\llbracket
D_{x,y}\left( z_{1}\right) ,z_{2},z_{3}\rrbracket +\llbracket
z_{1},D_{x,y}\left( z_{2}\right) ,z_{3}\rrbracket +\llbracket
z_{1},z_{2},D_{x,y}\left( z_{3}\right) \rrbracket \label{der-TJA-mult}
\end{equation}
holds. Due to the linearity of $D_{x,y}$ (where $x=(x_{1},x_{2})$ and $y=(y_{1},y_{2})$, with $x_{i},y_{i}\in \mathbb{V}$) and recalling its symmetry properties stated in the previous section,
it is sufficient to verify (\ref{der-TJA-mult}) for
$z_{1},z_{2},z_{3}\in \mathcal{B}$ and in
the following cases:
\begin{equation*}
\begin{tabular}{ll}
1. & $x_{1},x_{2},y_{1},y_{2}\in \left\{
b_{i},b_{j},b_{k},b_{l}\right\} $
and are all pairwise different; \\
2. & $x_{1},x_{2},y_{1},y_{2}\in \left\{ b_{i},b_{j},b_{k}\right\} $
and only two among  these  are equal; \\
3. & $x_{1},x_{2},y_{1},y_{2}\in \left\{ b_{i},b_{j}\right\} $ and
aren't
all equal; \\
4. & $x_{1},x_{2},y_{1},y_{2}\in \left\{ b_{i}\right\} $.
\end{tabular}
\end{equation*}
Using the definition of $D_{x,y}$ and (\ref{TJA-multiplication}) it is immediate to observe that, in cases 1. and 4., (\ref{der-TJA-mult}) holds trivially, since then we have
\begin{equation*}
D_{x,y} =  0.
\end{equation*}

Considering the case 2., we have to check two subcases.

2.1. $x_{1}=x_{2}=b_{i}$, $y_{1}=b_{j}$ and $y_{2}=b_{k}.$

Under these circumstances,
\begin{eqnarray*}
D_{x,y}\left( z\right)  &=&\llbracket \llbracket z,b_{i},b_{i}\rrbracket
,b_{j},b_{k}\rrbracket -\llbracket \llbracket z,b_{j},b_{k}\rrbracket
,b_{i},b_{i}\rrbracket
\\
&=&2\left( z,b_{i}\right) \llbracket b_{i},b_{j},b_{k}\rrbracket +\llbracket
z,b_{j},b_{k}\rrbracket -\left( z,b_{j}\right) \llbracket
b_{k},b_{i},b_{i}\rrbracket
-\left( z,b_{k}\right) \llbracket b_{j},b_{i},b_{i}\rrbracket  \\
&=&\left( z,b_{j}\right) b_{k}+\left( z,b_{k}\right) b_{j}-\left(
z,b_{j}\right) b_{k}-\left( z,b_{k}\right) b_{j}=0
\end{eqnarray*}
for all $z\in \mathbb{V}$, and thus (\ref{der-TJA-mult}) holds trivially.

2.2. $x_{1}=y_{1}=b_{i}$, $x_{2}=b_{j}$ and $y_{2}=b_{k}.$

Developing $D_{x,y}\left( z\right) $, we have:
\begin{equation*}
D_{x,y}\left( z\right) =\left( z,b_{j}\right) b_{k}-\left(
z,b_{k}\right) b_{j}.
\end{equation*}
Concerning (\ref{der-TJA-mult}), it is not difficult to conclude that:
\begin{eqnarray*}
LHS_{D} &=&\left[ \left( z_{1},z_{2}\right) \left( z_{3},b_{j}\right)
+\left( z_{1},z_{3}\right) \left( z_{2},b_{j}\right) +\left(
z_{2},z_{3}\right)
\left( z_{1},b_{j}\right) \right] b_{k} \\
&&-\left[ \left( z_{1},z_{2}\right) \left( z_{3},b_{k}\right)
+\left( z_{1},z_{3}\right) \left( z_{2},b_{k}\right) +\left(
z_{2},z_{3}\right) \left( z_{1},b_{k}\right) \right] b_{j}=RHS_{D}
\end{eqnarray*}
Thus (\ref{der-TJA-mult}) holds.

Let us now analyze the third case, which will be divided in three
subcases:

\begin{equation*}
\begin{tabular}{ll}
3.1. & $x_{1}=x_{2}=b_{i}$ and $y_{1}=y_{2}=b_{j}.$ \\
3.2. & $x_{1}=y_{1}=b_{i}$ and $x_{2}=y_{2}=b_{j}.$ \\
3.3. & $x_{1}=x_{2}=y_{1}=b_{i}$ and $y_{2}=b_{j}.$
\end{tabular}
\end{equation*}

Since in the first two subcases (\ref{der-TJA-mult}) trivially holds
(for the development of $D_{x,y}$ is, in each case, identically
zero), we now check what happens in the last one. We have:
\begin{equation*}
D_{x,y}\left( z\right) =2\left[ \left( z,b_{i}\right) b_{j}-\left(
z,b_{j}\right) b_{i}\right] .
\end{equation*}
Developing both sides of (\ref{der-TJA-mult}), once again we have:
\begin{eqnarray*}
LHS_{D} &=&2\left( \left( z_{1},z_{2}\right) \left( z_{3},b_{i}\right)
+\left( z_{1},z_{3}\right) \left( z_{2},b_{i}\right) +\left(
z_{2},z_{3}\right)
\left( z_{1},b_{i}\right) \right) b_{j} \\
&&-2\left( \left( z_{1},z_{2}\right) \left( z_{3},b_{j}\right)
+\left( z_{1},z_{3}\right) \left( z_{2},b_{j}\right) +\left(
z_{2},z_{3}\right)
\left( z_{1},b_{j}\right) \right) b_{i} \\
&=&RHS_{D},
\end{eqnarray*}
which ends the proof.
\end{proof}


\begin{Th}
The ternary Jordan algebra $\mathbb{A}$ is simple, except if $\dim\ \mathbb{V}=2$ and $char \left(\mathbb{F}\right) = 2$.
\end{Th}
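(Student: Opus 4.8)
The plan is to take an arbitrary nonzero ideal $\mathcal{I}$ of $\mathbb{A}$ and show that it must be all of $\mathbb{A}$, except in the stated exceptional case. The whole argument reduces to one observation: \emph{if some basis vector $b_i$ lies in $\mathcal{I}$, then $\mathcal{I}=\mathbb{A}$}. Indeed, using (\ref{TJA-multiplication}) and orthonormality of $\mathcal{B}$, for $j\neq i$ we get $\llbracket b_i,b_i,b_j\rrbracket=2(b_i,b_j)b_i+(b_i,b_i)b_j=b_j\in\mathcal{I}$, so $\mathcal{B}\subseteq\mathcal{I}$ and $\mathcal{I}=\mathbb{A}$. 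Hence it suffices to push a single basis vector into $\mathcal{I}$. That $\mathbb{A}$ is not abelian is clear as well: $\llbracket b_1,b_1,b_2\rrbracket=b_2\neq0$ when $\dim\mathbb{V}\geq2$, and for $\dim\mathbb{V}=1$ one uses $\llbracket b_1,b_1,b_1\rrbracket=3b_1$ instead.

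For $\mathrm{char}\,\mathbb{F}\neq2$ the job is easy. Pick $0\neq x=\sum_k\xi_kb_k\in\mathcal{I}$ and, by nondegeneracy, an index $i$ with $\xi_i=(x,b_i)\neq0$. Then
\begin{equation*}
\llbracket x,b_i,b_i\rrbracket-x=(b_i,b_i)x+2(x,b_i)b_i-x=2\xi_ib_i\in\mathcal{I},
\end{equation*}
and $2\xi_i\neq0$ forces $b_i\in\mathcal{I}$, whence $\mathcal{I}=\mathbb{A}$. This disposes of all dimensions at once, and in particular covers the characteristic-zero setting of this section.

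The main obstacle is $\mathrm{char}\,\mathbb{F}=2$, where the displayed identity degenerates to $\llbracket x,b_i,b_i\rrbracket=x$ and carries no information, so a different, dimension-sensitive, chaining of products is required. Again take $0\neq x=\sum_k\xi_kb_k\in\mathcal{I}$. If exactly one $\xi_i$ is nonzero, then $x\in\mathbb{F}b_i$ and we are done. Otherwise pick $i\neq j$ with $\xi_i,\xi_j\neq0$ and set $y:=\llbracket x,b_i,b_j\rrbracket=\xi_jb_i+\xi_ib_j\in\mathcal{I}\setminus\{0\}$. If $\dim\mathbb{V}\geq3$, choose $k\notin\{i,j\}$; orthonormality then gives $\llbracket y,b_i,b_k\rrbracket=(b_i,b_k)y+(y,b_k)b_i+(y,b_i)b_k=\xi_jb_k\in\mathcal{I}$, and $\xi_j\neq0$ yields $b_k\in\mathcal{I}$, hence $\mathcal{I}=\mathbb{A}$. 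Thus $\mathbb{A}$ is simple whenever $\mathrm{char}\,\mathbb{F}=2$ and $\dim\mathbb{V}\geq3$; the case $\dim\mathbb{V}=1$ is immediate since $\mathbb{V}$ then has no proper nonzero subspace.

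It remains to see that the exception $\dim\mathbb{V}=2$, $\mathrm{char}\,\mathbb{F}=2$ is genuine, which I would do by exhibiting an explicit proper ideal: writing out the products of $b_1+b_2$ with basis elements one checks that $\llbracket b_1+b_2,b_1,b_1\rrbracket=\llbracket b_1+b_2,b_1,b_2\rrbracket=\llbracket b_1+b_2,b_2,b_2\rrbracket=b_1+b_2$, so $\langle b_1+b_2\rangle_{\mathbb{F}}$ is a one-dimensional ideal and $\mathbb{A}$ is not simple. I expect the only delicate point to be the chaining step in characteristic $2$: one must verify that the two successive products genuinely produce a nonzero multiple of a \emph{new} basis vector, and that this device is unavailable when $\dim\mathbb{V}=2$ (precisely because there is no third index $k$), which is exactly what singles out dimension $2$.
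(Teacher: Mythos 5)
Your proof is correct and follows essentially the same route as the paper's: reduce to getting one basis vector into the ideal, use $\llbracket x,b_i,b_i\rrbracket-x=2\xi_ib_i$ when $\mathrm{char}\,\mathbb{F}\neq2$, chain $\llbracket x,b_i,b_j\rrbracket$ with $\llbracket\,\cdot\,,b_i,b_k\rrbracket$ when $\mathrm{char}\,\mathbb{F}=2$ and $\dim\mathbb{V}\geq3$, and exhibit $\langle b_1+b_2\rangle_{\mathbb{F}}$ as a proper ideal in the exceptional case. The only cosmetic difference is that the paper works with a minimal-length element and treats $\mathrm{char}\,\mathbb{F}=3$ as a separate (ultimately unnecessary) observation, whereas your single computation handles all characteristics other than $2$ at once.
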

\begin{proof}
Let $\mathcal{B}=\left\{ b_{1},\ldots ,b_{n}\right\} $ be an orthonormal basis of $\mathbb{V}$. The assertion is trivial if $n=1$, so admit that $n\geq 2$. The multiplication table for the basis elements is given by 
\begin{equation*}
\text{(i) }\llbracket b_{i},b_{i},b_{i}\rrbracket=3b_{i}, \quad \text{(ii) }\llbracket b_{i},b_{i},b_{j}\rrbracket=b_{j}, \ (i\neq j) \text{ and}\quad \text{(iii) }\llbracket b_{i},b_{j},b_{k}\rrbracket=0 \ (i,j,k) \text{ pairwise different.}
\end{equation*}
Let $\mathbb{I}\neq \{0\}$ be an ideal of $\mathbb{A}$. Clearly, it follows from (ii) in the multiplication table that if $b_{i}\in\mathbb{I}$, then the same will happen for the remaining $b_{j},j\neq i$, and then $\mathbb{I}=\mathbb{A}$, so $\mathbb{A}$ will be simple. 

Let $z=\sum\limits_{{\tiny \begin{array}{c}r=1  \end{array}}}^p \alpha _{r} b_{r}$ be an element of $ \mathbb{I}\backslash \{0\}$ with minimal length $p\neq 1$ and $\alpha_{r}\neq 0$, $r=1,...,p$. Note that there is no loss of generality in assuming this, since it is always possible reordering the basis elements. Further, as written in the previous paragraph, the assertion would trivially follow if $p=1$. Now, we have:
\begin{equation*}
w=\llbracket z,b_{1},b_{1}\rrbracket=3\alpha_{1}b_{1}+\alpha_{2}b_{2}+...+\alpha_{p}b_{p}\in \mathbb{I}\backslash \{0\}.
\end{equation*}
Then $w-z=2\alpha_{1}b_{1}\in \mathbb{I}\backslash \{0\}$ if $char \left(\mathbb{F}\right) \neq 2$, implying that $b_{1}\in \mathbb{I}$, and thus $\mathbb{I}=\mathbb{A}$. Before considering the case $char \left(\mathbb{F}\right) = 2$, observe that, when $char \left(\mathbb{F}\right) = 3$, despite being $\llbracket b_{i},b_{i},b_{i}\rrbracket=0$ and thus 
\begin{equation*}
w=\llbracket z,b_{1},b_{1}\rrbracket=\alpha_{2}b_{2}+...+\alpha_{p}b_{p}\in \mathbb{I}\backslash \{0\},
\end{equation*}
we would arrive to the same conclusion by considering $z-w=\alpha_{1}b_{1}\in \mathbb{I}\backslash \{0\}$.

Assume that $char \left(\mathbb{F}\right) = 2$. Thus, the only difference in the multiplication table is that $\llbracket b_{i},b_{i},b_{i}\rrbracket=b_{i}$. Admit first that $\dim \mathbb{V}=2$ and $\mathcal{B}=\left\{ b_{1},b_{2}\right\}$. Let $z=b_{1}+b_{2}$. From 
\begin{equation*}
\llbracket z,b_{1},b_{1}\rrbracket=z, \quad \llbracket z,b_{2},b_{2}\rrbracket=z\quad \text{and}\quad \llbracket z,b_{1},b_{2}\rrbracket=z 
\end{equation*}
it is clear that $\mathbb{I}=\langle z\rangle_{\mathbb{F}}$ is a non-trivial ideal of $\mathbb{A}$, so $\mathbb{A}$ is not simple. Admit now that $\dim \mathbb{V}> 2$ and consider $0\neq\mathbb{I}$ an ideal of $\mathbb{A}$. As previously done, set $z=\sum\limits_{{\tiny \begin{array}{c}r=1  \end{array}}}^p \alpha _{r} b_{r} \in \mathbb{I}\backslash \{0\}$ with minimal length $p\neq 1$ and $\alpha_{r}\neq 0$, $r=1,...,p$. Then, for $i,j\in \{1,...,p\}$, $i\neq j$, we have:
\begin{equation*}
w=\llbracket z,b_{i},b_{j}\rrbracket=\alpha_{i}b_{j}+\alpha_{j}b_{i}\in \mathbb{I}\backslash \{0\}.
\end{equation*}
On the other hand, for $k\notin \{i,j\}$ we have
\begin{equation*}
w'=\llbracket w,b_{i},b_{k}\rrbracket=\alpha_{j}b_{k}\in \mathbb{I}\backslash \{0\}.
\end{equation*}
So, $b_{k}\in \mathbb{I}$ and $\mathbb{I}=\mathbb{A}$.
\end{proof}

\bigskip

Recall now that an identity satisfied by a ternary algebra is said to be of degree (or level) $k$, with $k\in \mathbb{N}$, if $k$ is the number of times that the multiplication appears in each term of the identity (see \cite{PDB_etal}).  Next, we are going to study the identities of degrees $1$ and $2$, respectively, valid in the ternary Jordan algebra  $\mathbb{A}$.


The identities of degree $1$ satisfied by $\mathbb{A}$ have the following shape:
\begin{equation*}
\sum\limits_{{\tiny \begin{array}{c}\sigma \in S_{3}  \end{array}}}\alpha _{\sigma }\llbracket x_{\sigma \left(
1\right) },x_{\sigma \left( 2\right) },x_{\sigma \left( 3\right) }\rrbracket
=0,\qquad \alpha _{\sigma }\in \mathbb{F} .
\end{equation*}
Due to the total commutativity of the multiplication  (\ref {TJA-multiplication}),  this sum reduces to one summand and it is not  difficult to observe that  the identities of degree $1$ are resumed by that property.

Again by the total commutativity of the multiplication, the degree $2$ identities  valid in $\mathbb{A}$ assume the following form:
\begin{equation*}
\sum\limits_{{\tiny \begin{array}{c}\sigma \in S_{5} \\ 
\sigma(1)<\sigma(2)<\sigma(3) \\ \sigma(4)<\sigma(5) \end{array}}}\alpha_{\sigma }\llbracket \llbracket x_{\sigma
\left( 1\right) },x_{\sigma \left( 2\right) },x_{\sigma \left( 3\right)
}\rrbracket ,x_{\sigma \left( 4\right) },x_{\sigma \left( 5\right) }\rrbracket
=0,\qquad \alpha _{\sigma }\in \mathbb{F} ,
\end{equation*}
which can be expanded in the following way:
\begin{equation}
\begin{tabular}{l}
$\alpha _{1}\llbracket \llbracket x,y,z\rrbracket ,u,v\rrbracket+
 \alpha _{2}\llbracket \llbracket x,y,u\rrbracket ,z,v\rrbracket +
 \alpha _{3}\llbracket \llbracket x,y,v\rrbracket ,z,u\rrbracket+
 \alpha _{4}\llbracket \llbracket x,z,u\rrbracket ,y,v\rrbracket +\alpha _{5}\llbracket \llbracket x,z,v\rrbracket ,y,u\rrbracket+ $ \\
$ \alpha _{6}\llbracket\llbracket x,u,v\rrbracket ,y,z\rrbracket +
  \alpha _{7}\llbracket \llbracket y,z,u\rrbracket,x,v\rrbracket +
  \alpha _{8}\llbracket \llbracket y,z,v\rrbracket ,x,u\rrbracket  
+\alpha _{9}\llbracket \llbracket y,u,v\rrbracket ,x,z\rrbracket +
  \alpha _{10}\llbracket\llbracket z,u,v\rrbracket ,x,y\rrbracket =0.$%
\end{tabular}
\label{id-deg5}
\end{equation}
Let us find what conditions must the $\alpha_{i}$ satisfy.

{\bf 1.} If $\dim \mathbb{V}=1,$ being $\mathbb{V}=\left\langle b\right\rangle _{\mathbb{F}}$, since $\llbracket
b,b,b\rrbracket =3b$ from (\ref{id-deg5}) we get:
\begin{equation}
\sum_{i=1}^{10}\alpha _{i}=0.  \label{first}
\end{equation}
Now, admit that $\mathbb{V}=\left\langle b_{1},b_{2}\right\rangle _{\mathbb{F}}$, where $\left\{ b_{1},b_{2}\right\} $ is an orthonormal basis of $\mathbb{V}$. Then
\begin{equation*}
\llbracket b_{i},b_{i},b_{i}\rrbracket =3b_{i},\text{ }i=1,2,\quad \text{ and}\quad
\text{ }\llbracket b_{i},b_{i},b_{j}\rrbracket =b_{j}\text{, }i\neq j.
\end{equation*}
In order to analyze what relations between the scalars can be derived from (%
\ref{id-deg5}) , we are going to check all non redundant possible cases with
$x,y,z,u,v$ in the considered basis.


{\bf 2.} Suppose that among $x,y,z,u,v$ only four are equal (\textit{e.g.}, to $%
b_{1}$). Then, we have do consider $5$ subcases:
\begin{equation*}
\begin{tabular}{ll}
(2.1) $x=y=z=u=b_{1}$ and $v=b_{2};$ & (2.2) $x=y=z=v=b_{1}$ and $u=b_{2};$ \\
(2.3) $x=y=u=v=b_{1}$ and $z=b_{2};$ & (2.4) $x=z=u=v=b_{1}$ and $y=b_{2};$ \\
(2.5) $y=z=u=v=b_{1}$ and $x=b_{2}.$%
\end{tabular}
\end{equation*}

Replacing in (\ref{id-deg5}) for each subcase, we obtain:
\begin{equation*}
\begin{tabular}{l}
$(2.1) \to \ 3\alpha _{1}+3\alpha _{2}+\alpha _{3}+3\alpha _{4}+\alpha _{5}+\alpha
_{6}+3\alpha _{7}+\alpha _{8}+\alpha _{9}+\alpha _{10}=0,$\\
$(2.2) \to \ 3\alpha _{1}+\alpha _{2}+3\alpha _{3}+\alpha _{4}+3\alpha _{5}+\alpha
_{6}+\alpha _{7}+3\alpha _{8}+\alpha _{9}+\alpha _{10}=0,$ \\
$(2.3) \to \ \alpha _{1}+3\alpha _{2}+3\alpha _{3}+\alpha _{4}+\alpha _{5}+3\alpha
_{6}+\alpha _{7}+\alpha _{8}+3\alpha _{9}+\alpha _{10}=0,$\\
$(2.4) \to \ \alpha _{1}+\alpha _{2}+\alpha _{3}+3\alpha _{4}+3\alpha _{5}+3\alpha
_{6}+\alpha _{7}+\alpha _{8}+\alpha _{9}+3\alpha _{10}=0,$\\
$(2.5) \to \ \alpha _{1}+\alpha _{2}+\alpha _{3}+\alpha _{4}+\alpha _{5}+\alpha
_{6}+3\alpha _{7}+3\alpha _{8}+3\alpha _{9}+3\alpha _{10}=0.$
\end{tabular}
\end{equation*}

{\bf 3.} Admit now that among $x,y,z,u,v$ only three are equal (\textit{e.g.}, to $%
b_{1}$). Then, we have to consider ten subcases:
\begin{equation*}
\begin{tabular}{ll}
(3.1) $x=y=z=b_{1}$ and $u=v=b_{2};$ & (3.2) $x=y=u=b_{1}$ and $z=v=b_{2};$ \\
(3.3) $x=y=v=b_{1}$ and $z=u=b_{2};$ & (3.4) $x=z=u=b_{1}$ and $y=v=b_{2};$ \\
(3.5) $x=z=v=b_{1}$ and $y=u=b_{2};$ & (3.6) $x=u=v=b_{1}$ and $y=z=b_{2};$ \\
(3.7) $y=z=u=b_{1}$ and $x=v=b_{2};$ & (3.8) $y=z=v=b_{1}$ and $x=u=b_{2};$ \\
(3.9) $y=u=v=b_{1}$ and $x=z=b_{2};$ & (3.10) $z=u=v=b_{1}$ and $x=y=b_{2}.$%
\end{tabular}
\end{equation*}
Analogously to what we have done in case 2., we will obtain the following
equations:
\begin{equation*}
\begin{tabular}{l}
$(3.1) \to \ 3\alpha _{1}+\alpha _{2}+\alpha _{3}+\alpha _{4}+\alpha _{5}+3\alpha
_{6}+\alpha _{7}+\alpha _{8}+3\alpha _{9}+3\alpha _{10}=0,$\\
$(3.2) \to \ \alpha _{1}+3\alpha _{2}+\alpha _{3}+\alpha _{4}+3\alpha _{5}+\alpha
_{6}+\alpha _{7}+3\alpha _{8}+\alpha _{9}+3\alpha _{10}=0,$\\
$(3.3) \to \ \alpha _{1}+\alpha _{2}+3\alpha _{3}+3\alpha _{4}+\alpha _{5}+\alpha
_{6}+3\alpha _{7}+\alpha _{8}+\alpha _{9}+3\alpha _{10}=0,$\\
$(3.4) \to \ \alpha _{1}+\alpha _{2}+3\alpha _{3}+3\alpha _{4}+\alpha _{5}+\alpha
_{6}+\alpha _{7}+3\alpha _{8}+3\alpha _{9}+\alpha _{10}=0,$\\
$(3.5) \to \ \alpha _{1}+3\alpha _{2}+\alpha _{3}+\alpha _{4}+3\alpha _{5}+\alpha
_{6}+3\alpha _{7}+\alpha _{8}+3\alpha _{9}+\alpha _{10}=0,$\\
$(3.6) \to \ 3\alpha _{1}+\alpha _{2}+\alpha _{3}+\alpha _{4}+\alpha _{5}+3\alpha
_{6}+3\alpha _{7}+3\alpha _{8}+\alpha _{9}+\alpha _{10}=0,$\\
$(3.7) \to \ \alpha _{1}+\alpha _{2}+3\alpha _{3}+\alpha _{4}+3\alpha _{5}+3\alpha
_{6}+3\alpha _{7}+\alpha _{8}+\alpha _{9}+\alpha _{10}=0,$\\
$(3.8) \to \ \alpha _{1}+3\alpha _{2}+\alpha _{3}+3\alpha _{4}+\alpha _{5}+3\alpha
_{6}+\alpha _{7}+3\alpha _{8}+\alpha _{9}+\alpha _{10}=0,$\\
$(3.9) \to \ 3\alpha _{1}+\alpha _{2}+\alpha _{3}+3\alpha _{4}+3\alpha _{5}+\alpha
_{6}+\alpha _{7}+\alpha _{8}+3\alpha _{9}+\alpha _{10}=0,$\\
$(3.10) \to \ 3\alpha _{1}+3\alpha _{2}+3\alpha _{3}+\alpha _{4}+\alpha _{5}+\alpha
_{6}+\alpha _{7}+\alpha _{8}+\alpha _{9}+3\alpha _{10}=0.$
\end{tabular}
\end{equation*}

Since $\dim \mathbb{V}=2$, it is clear that all other cases are redundant. Now, the
linear system consisting of (\ref{first}) and the other $15$ equations has
only the trivial solution. Therefore, the only identities of degree $2$ in $\mathbb{A}
$ are those that result from lifting the identities of degree $1$.

\begin{Remark}
Observe that a lifting is every process which allows to obtain $(k+1)$-degree identities starting form $k$-degree identities. This include techniques of two types (written in terms of the ternary multiplication $\llbracket .,.,. \rrbracket$):\newline
(i) embedding -- which justifies, \textit{e.g.}, that
\begin{equation*}
\llbracket \llbracket a,b,c \rrbracket ,d,e\rrbracket =\llbracket \llbracket b,a,c\rrbracket ,d,e\rrbracket \text{ starting from } \llbracket a,b,c\rrbracket
=\llbracket b,a,c\rrbracket ,
\end{equation*}
\newline
(ii)  replacing an element by a triple -- justifying, \textit{%
e.g.}, that
\begin{equation*}
\llbracket \llbracket a,b,c\rrbracket,d,e\rrbracket =\llbracket \llbracket a,b,c\rrbracket ,e,d\rrbracket \text{ starting from } \llbracket a,b,c\rrbracket =\llbracket a,c,b\rrbracket .
\end{equation*}
\end{Remark}

Thus, we have the following results:

\begin{Lem}
All degree $1$ identities on $\mathbb{A}$ are a consequence of the total commutativity of  (\ref {TJA-multiplication}).
\end{Lem}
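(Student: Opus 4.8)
The plan is to observe that a degree~$1$ identity on $\mathbb{A}$ is by definition an element of the form
\[
\sum_{\sigma \in S_{3}} \alpha_{\sigma}\,\llbracket x_{\sigma(1)},x_{\sigma(2)},x_{\sigma(3)}\rrbracket = 0,
\qquad \alpha_{\sigma}\in\mathbb{F},
\]
holding for all $x_{1},x_{2},x_{3}\in\mathbb{V}$. The point is that every such relation already follows formally, for any totally commutative ternary algebra, from the total commutativity of the bracket: collecting the six summands according to the (single) value of the unordered triple $\{x_{1},x_{2},x_{3}\}$, total commutativity allows us to rewrite each $\llbracket x_{\sigma(1)},x_{\sigma(2)},x_{\sigma(3)}\rrbracket$ as $\llbracket x_{1},x_{2},x_{3}\rrbracket$, so the left-hand side collapses to $\bigl(\sum_{\sigma}\alpha_{\sigma}\bigr)\llbracket x_{1},x_{2},x_{3}\rrbracket$. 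Hence the identity is equivalent to the scalar condition $\sum_{\sigma\in S_{3}}\alpha_{\sigma}=0$, together with the commutativity rewritings; in particular it is a consequence of total commutativity alone, and no genuinely new identity of degree~$1$ can appear.

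First I would recall the general shape of a degree~$1$ identity as displayed just above this lemma in the text, and state precisely what ``consequence of total commutativity'' means: it is an identity derivable by the lifting process of type~(i) (embedding/permuting arguments of the bracket) described in the Remark, starting from~(\ref{tot-com}) (equivalently, from~(\ref{tot-sym})). Then I would carry out the one-line reduction: using total commutativity on each term, $\sum_{\sigma}\alpha_{\sigma}\llbracket x_{\sigma(1)},x_{\sigma(2)},x_{\sigma(3)}\rrbracket = \bigl(\sum_{\sigma}\alpha_{\sigma}\bigr)\llbracket x_{1},x_{2},x_{3}\rrbracket$. If $\sum_{\sigma}\alpha_{\sigma}\neq 0$, specializing $x_{1}=x_{2}=x_{3}=b_{1}$ gives $\bigl(\sum_{\sigma}\alpha_{\sigma}\bigr)\,\llbracket b_{1},b_{1},b_{1}\rrbracket = 3\bigl(\sum_{\sigma}\alpha_{\sigma}\bigr)b_{1}=0$, which (since $\mathrm{char}\,\mathbb{F}=0$) forces $\sum_{\sigma}\alpha_{\sigma}=0$, a contradiction; so every degree~$1$ identity satisfies $\sum_{\sigma}\alpha_{\sigma}=0$. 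Conversely, whenever $\sum_{\sigma}\alpha_{\sigma}=0$ the displayed sum vanishes identically by the rewriting above, and this rewriting uses nothing but total commutativity; therefore the space of degree~$1$ identities of $\mathbb{A}$ coincides with the span of the relations obtained by lifting~(\ref{tot-com}).

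There is essentially no obstacle here: the only thing to be careful about is the logical direction, namely that we must show not merely that total commutativity holds in $\mathbb{A}$ (which is immediate from~(\ref{TJA-multiplication})), but that it \emph{accounts for all} degree~$1$ identities, i.e.\ that the linear span of the liftings of~(\ref{tot-com}) exhausts the identity space. That is exactly what the collapse to the single coefficient condition $\sum_{\sigma}\alpha_{\sigma}=0$ delivers, because the relation ``$\sum\alpha_{\sigma}\llbracket x_{\sigma(1)},x_{\sigma(2)},x_{\sigma(3)}\rrbracket=\bigl(\sum\alpha_{\sigma}\bigr)\llbracket x_{1},x_{2},x_{3}\rrbracket$'' is itself a formal consequence of total commutativity, valid before imposing anything about $\mathbb{A}$. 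Thus the lemma follows.
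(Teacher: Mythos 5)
Your proposal is correct and follows the same route as the paper: the text notes that, by total commutativity, the sum $\sum_{\sigma}\alpha_{\sigma}\llbracket x_{\sigma(1)},x_{\sigma(2)},x_{\sigma(3)}\rrbracket$ collapses to a single summand with coefficient $\sum_{\sigma}\alpha_{\sigma}$, which must then vanish since $\mathbb{A}$ is not abelian. Your specialization $x_{1}=x_{2}=x_{3}=b_{1}$, giving $3\bigl(\sum_{\sigma}\alpha_{\sigma}\bigr)b_{1}=0$ in characteristic zero, simply makes explicit the step the paper leaves as ``not difficult to observe.''
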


\begin{Lem}
All degree $2$ identities on $\mathbb{A}$ are a consequence of the total commutativity of  (\ref {TJA-multiplication}),
by means of a lifting process.
\end{Lem}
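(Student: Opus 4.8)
The plan is to reduce the general degree $2$ identity to a finite linear-algebraic problem and to solve that problem essentially by the computation already carried out in the body of the paper. First I would recall that, by the total commutativity of $\llbracket .,.,. \rrbracket$ established in Theorem~\ref{THM 1}, any degree $2$ identity on $\mathbb{A}$ may be brought into the normal form \eqref{id-deg5}, with ten coefficients $\alpha_1,\ldots,\alpha_{10}\in\mathbb{F}$. The statement then amounts to showing that the only such identity is the trivial one (all $\alpha_i=0$), i.e.\ that no genuinely new relation of degree $2$ holds; equivalently, every consequence of \eqref{id-deg5} that is actually satisfied by $\mathbb{A}$ already follows from the degree $1$ identities (total commutativity) by a lifting process, as described in the Remark.

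Next I would make precise how the substitutions $x,y,z,u,v\in\mathcal{B}$ turn \eqref{id-deg5} into a homogeneous linear system in the $\alpha_i$. Using the explicit multiplication table $\llbracket b_i,b_i,b_i\rrbracket=3b_i$, $\llbracket b_i,b_i,b_j\rrbracket=b_j$ $(i\neq j)$, $\llbracket b_i,b_j,b_k\rrbracket=0$ (pairwise distinct), and expanding nested products, each choice of basis vectors for $x,y,z,u,v$ produces one or more scalar equations among the $\alpha_i$ (one for each basis vector appearing in the resulting expression). I would organize the substitutions exactly as in the body: the case $\dim\mathbb{V}=1$ yields $\sum_{i=1}^{10}\alpha_i=0$ (equation \eqref{first}); then, taking $\dim\mathbb{V}=2$ with an orthonormal basis $\{b_1,b_2\}$, the five subcases of ``four equal'' and the ten subcases of ``three equal'' produce the fifteen further equations displayed above. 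Because $\dim\mathbb{V}=2$, every other substitution pattern (two equal, or all distinct) is either impossible or a permutation of one already listed, so these $16$ equations are exhaustive. It then suffices to check that this $16\times 10$ homogeneous system has rank $10$, hence only the zero solution; this is the one finite computation the proof rests on, and I would simply invoke it (the matrix is explicit and its reduction is routine). Concluding, any degree $2$ identity has all coefficients zero in \eqref{id-deg5}, so there is no degree $2$ identity beyond what the normal-form reduction itself uses, and the latter reduction is precisely a lifting of the degree $1$ (total commutativity) identities.

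The main obstacle I anticipate is purely bookkeeping rather than conceptual: one must (a) correctly expand the ten nested ternary products in \eqref{id-deg5} under each of the sixteen substitutions, being careful that the inner bracket $\llbracket x_{\sigma(1)},x_{\sigma(2)},x_{\sigma(3)}\rrbracket$ can contribute several basis vectors (so one substitution may split into several scalar equations, matched coefficient-by-coefficient on each $b_i$), and (b) verify the rank of the resulting $16\times 10$ matrix. Since the paper has already displayed all sixteen equations, the remaining work is just the rank verification, which I would state as a direct check (e.g.\ by row reduction, the coefficient matrix is nonsingular on a $10$-element subset of the rows). Given that, the final step is a one-line observation: the normal form \eqref{id-deg5} is obtained from a degree $1$ identity solely by the two lifting moves described in the Remark (embedding and replacement of an argument by a triple), so ``all degree $2$ identities are a consequence of total commutativity by a lifting process'' is exactly the content of the triviality of the system.
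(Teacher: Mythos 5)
Your proposal is correct and follows essentially the same route as the paper: reduce any degree $2$ identity to the normal form \eqref{id-deg5} via total commutativity, extract the equation \eqref{first} from the one-dimensional case and the fifteen further equations from the ``four equal'' and ``three equal'' substitutions in $\dim\mathbb{V}=2$, and observe that the resulting homogeneous system forces all $\alpha_i=0$. The only remaining work in both versions is the routine rank check, so nothing essential is missing.
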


\begin{Remark}
Recall that  a Jordan triple system (see \cite{bremner_peresi} and also \cite
{gnedbaye2007}, where this notion also appears under the name of ''ternary Jordan algebra'') is a ternary algebra $\mathbb{A}$ with ternary multiplication $\llbracket .,.,. \rrbracket$ satisfying a partial commutativity property \label{other}
\begin{equation*}
\llbracket x,y,z \rrbracket = \llbracket z,y,x \rrbracket
\end{equation*}
and the following identity:
\begin{equation}\label{tjs}
\llbracket \llbracket x,y,z \rrbracket ,u,v \rrbracket +\llbracket z,u,\llbracket x,y,v \rrbracket\rrbracket =\llbracket x,y,\llbracket z,u,v \rrbracket\rrbracket +\llbracket z,\llbracket y,x,u \rrbracket ,v\rrbracket.
\end{equation}
According to the previous computations, it is also clear that this ternary Jordan
algebra $\mathbb{A}$ doesn't satisfy $(\ref{tjs})$, clarifying that we are working with a different generalization.
\end{Remark}


\section{Derivations of the ternary Jordan algebra $\mathbb{A}$}

We are now going to describe the derivations of $\mathbb{A}$, the ternary Jordan
algebra defined in the previous section.
Consider a linear map $D:\mathbb{V}\to \mathbb{V}$. Using the definition of
derivation of a ternary algebra and (\ref{TJA-multiplication}), it is possible to see
that $D\in Der\left( \mathbb{A}\right) $ if and only if
\begin{equation}
\big( \left( D\left( y\right) ,z\right) +\left( y,D(z)\right) \big) x+%
\big( \left( D\left( x\right) ,z\right) +\left( x,D(z)\right) \big) y+%
\big( \left( D\left( x\right) ,y\right) +\left( x,D(y)\right) \big) z=0,
\label{derivation-TJA}
\end{equation}
for all $x,y,z\in \mathbb{V}$. It is clear that it is sufficient to work with (\ref
{derivation-TJA}) for all $x,y,z\in \mathcal{B}$, the orthonormal basis of $\mathbb{V}$ we have chosen before. 



It is easy to see that
\begin{equation}
\left( D\left( b_{i}\right) ,b_{j}\right) =-\left( D\left( b_{j}\right),b_{i}\right), \text{ with } i\neq j.  \label{second_der-TJA}
\end{equation}
This way, we have:
\begin{equation*}
D\left( b_{j}\right) =\sum_{i=1}^{n}\alpha _{i}b_{i}=\sum_{i=1}^{n}\left(b_{i},D\left( b_{j}\right) \right) b_{i}.
\end{equation*}
This means that for every derivation $D$, it is true that $D$ is a skewsymmetric operator of $\mathbb{V}$.

Now, observe that $Inder\left( \mathbb{A}\right) $, the algebra of inner derivations
of $\mathbb{A}$, is just the Lie algebra generated by the the right multiplication
operators $R_{x}$, $x=(x_{1},x_{2})$ , or, equivalently,
\begin{equation*}
Inder\left( \mathbb{A}\right) =\left\langle D_{x,y}:x=\left( x_{1},x_{2}\right)
,y=\left( y_{1},y_{2}\right) ,x_{i},y_{i}\in \mathbb{V}\right\rangle _{\mathbb{F} },
\end{equation*}
where $D_{x,y}$ is defined as before. Thus, we are going to
consider the operators $D_{x,y}$ and analyze the only cases when these are
non trivially null. Recalling the proof of Theorem \ref{THM 1}, this means
that we just have to see what happens in the subcases 2.2 and  3.3.

As in subcase 2.2, let now $x_{1}=y_{1}=b_{i}$, $x_{2}=b_{j},$ $y_{2}=b_{k}$  and consider $i,j,k$ pairwise different.
Recall that we have:
\begin{equation*}
D_{x,y}(z)=\left( z,b_{j}\right) b_{k}-\left( z,b_{k}\right) b_{j},
\end{equation*}
and thus
\begin{equation*}
D_{x,y}(b_{j})=b_{k},\quad D_{x,y}(b_{k})=-b_{j}\quad \text{and}\quad
D_{x,y}(b_{r})=0,\text{ }r\neq k,j.
\end{equation*}
This way, if $M_{x,y}$\ denotes the matrix of $D_{x,y}$ with respect to $%
\mathcal{B}$, it is clear that
\begin{equation*}
M_{x,y}=M_{\left( b_{i},b_{j}\right) ,\left( b_{i},b_{k}\right)
}=e_{jk}-e_{kj},\quad j\neq k.
\end{equation*}

Now, analogously to the subcase 3.3., let us take  $x_{1}=x_{2}=y_{1}=e_{i}$ and $y_{2}=e_{j}$, $i\neq j$. Then, we know that
\begin{equation*}
D_{x,y}(z)=2\big( \left( z,b_{i}\right) b_{j}-\left( z,b_{j}\right) b_{i}
\big) ,
\end{equation*}
and thus
\begin{equation*}
D_{x,y}(b_{i})=2b_{j},\quad D_{x,y}(b_{j})=-2b_{i}\quad \text{and}\quad
D_{x,y}(b_{k})=0,\text{ }k\neq i,j.
\end{equation*}
This means that
\begin{equation*}
M_{x,y}=M_{\left( b_{i},b_{i}\right) ,\left( b_{i},b_{j}\right) }=2\left(
e_{ij}-e_{ji}\right) ,\quad i\neq j.
\end{equation*}

From here and from the above caracterization of the derivations of $\mathbb{A}$, it
is clear that the following result holds:

\begin{Th}  \label{der}
$Der\left( \mathbb{A}\right) =Inder\left( \mathbb{A}\right)=so(n).$
\end{Th}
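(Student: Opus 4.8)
The plan is to prove the two equalities $Der(\mathbb{A}) = Inder(\mathbb{A})$ and $Der(\mathbb{A}) = so(n)$ by a chain of inclusions. First I would establish $Inder(\mathbb{A}) \subseteq Der(\mathbb{A})$, which is immediate: by Theorem \ref{THM 1} the algebra $\mathbb{A}$ is a ternary Jordan algebra, so by definition every operator $D_{x,y}$ lies in $Der(\mathbb{A})$, and since $Inder(\mathbb{A})$ is the Lie subalgebra generated by these $D_{x,y}$ and $Der(\mathbb{A})$ is a Lie algebra, the whole of $Inder(\mathbb{A})$ sits inside $Der(\mathbb{A})$.

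Next I would prove $Der(\mathbb{A}) \subseteq so(n)$. This is exactly the content of the computation already carried out above: from the derivation condition (\ref{derivation-TJA}), specializing $x,y,z$ to basis vectors in $\mathcal{B}$, one extracts (\ref{second_der-TJA}), namely $(D(b_i),b_j) = -(D(b_j),b_i)$ for $i\neq j$; taking $i=j$ (or using bilinearity and characteristic $0$) also forces $(D(b_i),b_i)=0$. Hence the matrix of any $D\in Der(\mathbb{A})$ in the orthonormal basis $\mathcal{B}$ is skew-symmetric, i.e. $D\in so(n)$. So far this gives the chain $Inder(\mathbb{A}) \subseteq Der(\mathbb{A}) \subseteq so(n)$.

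The remaining, and genuinely load-bearing, step is to show $so(n) \subseteq Inder(\mathbb{A})$, which will close the chain and force all three algebras to coincide. For this I would exhibit enough inner derivations to span $so(n)$: from subcase $2.2$ of the proof of Theorem \ref{THM 1} we have, for pairwise distinct $i,j,k$, the inner derivation $D_{(b_i,b_j),(b_i,b_k)}$ whose matrix is $e_{jk}-e_{kj}$, and from subcase $3.3$ we have $D_{(b_i,b_i),(b_i,b_j)}$ with matrix $2(e_{ij}-e_{ji})$. The elementary skew matrices $e_{pq}-e_{qp}$ ($p<q$) form a basis of $so(n)$, so one only needs that every such $e_{pq}-e_{qp}$ is obtained this way: for $n\geq 3$ pick any index $i\notin\{p,q\}$ and use the subcase $2.2$ operator $D_{(b_i,b_p),(b_i,b_q)} = e_{pq}-e_{qp}$; for $n=2$ the subcase $3.3$ operator $\tfrac12 D_{(b_1,b_1),(b_1,b_2)} = e_{12}-e_{21}$ does the job (here $char(\mathbb{F})=0$, so dividing by $2$ is legitimate). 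Therefore $so(n)\subseteq Inder(\mathbb{A})$, and combining with the previous inclusions yields $Der(\mathbb{A}) = Inder(\mathbb{A}) = so(n)$.

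The main obstacle is the last inclusion, $so(n)\subseteq Inder(\mathbb{A})$ — specifically making sure the spanning argument handles the small-dimensional cases ($n=1$, where $so(1)=0$ and the statement is vacuous, and $n=2$, where the distinct-triple construction of subcase $2.2$ is unavailable and one must fall back on the subcase $3.3$ family). The verifications that the listed $D_{x,y}$ act as claimed on basis vectors are already recorded in the discussion preceding the theorem, so no further computation is needed there; the argument is essentially just the observation that those matrices generate $so(n)$ as a vector space.
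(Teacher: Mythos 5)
Your proposal is correct and follows essentially the same route as the paper: show every derivation is skew-symmetric via the specialization of the derivation condition to basis vectors, and then observe that the inner derivations $D_{(b_i,b_j),(b_i,b_k)}=e_{jk}-e_{kj}$ and $D_{(b_i,b_i),(b_i,b_j)}=2(e_{ij}-e_{ji})$ already span $so(n)$, closing the chain $so(n)\subseteq Inder(\mathbb{A})\subseteq Der(\mathbb{A})\subseteq so(n)$. Your treatment is in fact slightly more careful than the paper's, which leaves the diagonal condition $(D(b_i),b_i)=0$ and the small cases $n=1,2$ implicit.
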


\begin{Remark}
In $1955$ Jacobson proved that if a finite dimensional Lie algebra over a field of characteristic zero has an invertible derivation,
then it is a nilpotent algebra \cite{jac}.
The same result was proved for Jordan algebras \cite{kp16}, but as we can see from Theorem \ref{der}
the Theorem of Jacobson is not true for ternary Jordan algebras.
We can take a ternary Jordan algebra $\mathbb{A}$ (as in Theorem \ref{der}) with dimension $4$ and
consider the map defined by the following matrix $\sum_{1\leq i<j\le 4} (e_{ij}-e_{ji}).$
As follows, there is a simple  ternary Jordan algebra with an invertible derivation.
\end{Remark}

\section{Searching for new examples}

In this section, we give three examples of ternary algebras that appeared while searching for new intersting examples of ternary Jordan algebras.

\subsection{Ternary symmetrized matrix algebras}

Consider the following ternary algebras
\begin{equation*}
\mathfrak{A}=\left( M_{n}\left(\mathbb{F}\right),\llbracket .,.,.\rrbracket \right),
\end{equation*}
where $\llbracket .,.,.\rrbracket$ is the symmetrized ternary multiplication defined by
\begin{equation*}
\llbracket A,B,C\rrbracket=sym\left(ABC\right)=ABC+ACB+BAC+BCA+CAB+CBA\text{, with }A,B,C\in
M_{n}\left(\mathbb{F}\right) .
\end{equation*}

This multiplication, also known as the ternary anticommutator, is clearly total commutative.  It is also a simple task to verify that $\mathfrak{A}$ is not a ternary Jordan algebra (at least, if $char \left(\mathbb{F}\right) \neq 3$). In fact, let $x=\left(x_{1},x_{2}\right)$, $y=\left(y_{1},y_{2}\right)$, with $x_{1},x_{2},y_{1},y_{2}\in M_{n}\left(\mathbb{F}\right)$ and consider $D_{x,y}$ as defined in the previous sections. The identity 
\begin{equation*}
D_{x,y}\ \llbracket A,B,C\rrbracket=\llbracket D_{x,y}(A),B,C\rrbracket+\llbracket A, D_{x,y}(B),C\rrbracket+\llbracket A,B,D_{x,y}(C)\rrbracket
\end{equation*}
is not satisfied in $\mathfrak{A}$. To see this, we can consider $n=3$ and evaluate both sides of the identity for the following elements of the canonical basis of $ M_{n}\left(\mathbb{F}\right)$:
\begin{equation*}
x_{1}=e_{23},\  x_{2}=e_{32},\ y_{1}=e_{22},\ y_{2}=e_{23},\ A=e_{12},\ B=e_{23}\text{ and } C=e_{32}.
\end{equation*}
Then $LHS_{D}=0$, while $RHS_{D}=-3e_{13}$.

However, we have the following result:

\begin{Th}
Given different $i,j\in \left\{1,...,n\right\}$ the following $2$-dimensional subalgebras of $M_{n}\left(\mathbb{F}\right)$ 
\begin{equation*}
\mathfrak{S}_{1}=\left\langle e_{ii},e_{ij} \right\rangle _{\mathbb{F}} 
\text{and}\quad \mathfrak{S}_{2}=\left\langle e_{ij},e_{ji} \right\rangle _{\mathbb{F}},\quad (i\neq j), 
\end{equation*}
are non-isomorphic ternary Jordan subalgebras of $\mathfrak{A}$. Further, 
$\mathfrak{S}_{2}$ is simple.
\end{Th}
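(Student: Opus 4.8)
The plan is to verify the two defining properties of a ternary Jordan algebra — total commutativity (which is automatic since $\llbracket .,.,.\rrbracket$ is the symmetrized product) and the $D_{x,y}$-identity — for each of the two subalgebras, then distinguish them, and finally check simplicity of $\mathfrak{S}_2$. First I would write down the multiplication tables. For $\mathfrak{S}_1=\langle e_{ii},e_{ij}\rangle$ with $i\neq j$, note that $e_{ii}e_{ii}=e_{ii}$, $e_{ii}e_{ij}=e_{ij}$, $e_{ij}e_{ii}=0$, $e_{ij}e_{ij}=0$, so symmetrizing one gets (up to the combinatorial factors coming from $sym$) $\llbracket e_{ii},e_{ii},e_{ii}\rrbracket = 6e_{ii}$, $\llbracket e_{ii},e_{ii},e_{ij}\rrbracket = 2e_{ij}$, $\llbracket e_{ii},e_{ij},e_{ij}\rrbracket=0$, $\llbracket e_{ij},e_{ij},e_{ij}\rrbracket=0$; in particular $\mathfrak{S}_1$ is closed. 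For $\mathfrak{S}_2=\langle e_{ij},e_{ji}\rangle$, using $e_{ij}e_{ji}=e_{ii}$, $e_{ji}e_{ij}=e_{jj}$, $e_{ij}e_{ij}=e_{ji}e_{ji}=0$, one finds $\llbracket e_{ij},e_{ij},e_{ij}\rrbracket=0$, $\llbracket e_{ij},e_{ij},e_{ji}\rrbracket = 2e_{ij}$ (the $e_{ij}e_{ij}e_{ji}$-type terms contribute $e_{ij}\cdot e_{ii}$ etc.), $\llbracket e_{ij},e_{ji},e_{ji}\rrbracket = 2e_{ji}$, $\llbracket e_{ji},e_{ji},e_{ji}\rrbracket=0$; so $\mathfrak{S}_2$ is closed. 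I would double-check these symmetrized coefficients carefully, since the combinatorial factor $6$ versus $2$ is where sign/characteristic subtleties enter.

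Next I would compute the operators $R_x$ and $D_{x,y}$ on each $2$-dimensional algebra. By bilinearity of $D_{x,y}$ in $x$ and $y$ and the symmetry relations $D_{x,y}=-D_{y,x}$, $D_{x,x}=0$ noted in the paper, in a $2$-dimensional algebra there is essentially only one pair of right multiplications to consider, so one reduces to a single $2\times 2$ matrix $D$. For $\mathfrak{S}_1$, with basis $\{e_{ii},e_{ij}\}$ the matrices $R_{(e_{ii},e_{ii})}$, $R_{(e_{ii},e_{ij})}$, $R_{(e_{ij},e_{ij})}$ are computed from the table and $D$ is their relevant commutator; for $\mathfrak{S}_2$ one does the analogous computation with basis $\{e_{ij},e_{ji}\}$. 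Then I would verify $D_{x,y}\llbracket z_1,z_2,z_3\rrbracket = \sum_k \llbracket z_1,\ldots,D_{x,y}(z_k),\ldots,z_3\rrbracket$ by checking it on the finitely many basis triples, exactly as in the proof of Theorem on $\mathcal{V}_{f,g,h}$. This is routine $2\times 2$ matrix arithmetic. The main obstacle I anticipate is bookkeeping the combinatorial coefficients in $sym(ABC)$ correctly across all the cases so that the $D_{x,y}$-identity genuinely closes up (and does not secretly require a characteristic restriction), especially for $\mathfrak{S}_1$ where the diagonal cube produces the factor $6$.

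For the non-isomorphism I would argue structurally rather than by brute force: an isomorphism of ternary algebras must carry the ternary-nilpotency/idempotent structure of one to the other. In $\mathfrak{S}_1$ there is an element, namely $e_{ij}$, with $\llbracket e_{ij},\mathfrak{S}_1,\mathfrak{S}_1\rrbracket\subseteq \langle e_{ij}\rangle$ and in fact $\llbracket e_{ij},e_{ij},\mathfrak{S}_1\rrbracket=0$, so $\langle e_{ij}\rangle$ is a proper ideal — hence $\mathfrak{S}_1$ is \emph{not} simple. By contrast, as I check below, $\mathfrak{S}_2$ is simple, and a simple algebra cannot be isomorphic to a non-simple one; this settles non-isomorphism in one stroke and simultaneously gives "Further, $\mathfrak{S}_2$ is simple" once that claim is proven. (If one prefers an invariant not relying on simplicity, one can instead compare the dimension of $\llbracket A,A,A\rrbracket$-images or the rank of the bilinear "trace form" $(A,B)\mapsto$ coefficient extraction from $\llbracket A,A,B\rrbracket$, which differ between the two tables.)

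Finally, to prove $\mathfrak{S}_2$ is simple: suppose $\mathbb{I}\neq\{0\}$ is an ideal and take $0\neq w=\lambda e_{ij}+\mu e_{ji}\in\mathbb{I}$. Using the multiplication table, $\llbracket w,e_{ij},e_{ji}\rrbracket$ and $\llbracket w,e_{ij},e_{ij}\rrbracket$, $\llbracket w,e_{ji},e_{ji}\rrbracket$ produce nonzero scalar multiples of $e_{ij}$ and of $e_{ji}$ separately whenever $\lambda\neq0$ or $\mu\neq0$ (this is where $\llbracket e_{ij},e_{ij},e_{ji}\rrbracket=2e_{ij}$, $\llbracket e_{ij},e_{ji},e_{ji}\rrbracket=2e_{ji}$ are used; one should note the mild assumption $char(\mathbb{F})\neq 2$ or re-examine the coefficients, matching the style of the earlier results), so $e_{ij},e_{ji}\in\mathbb{I}$ and hence $\mathbb{I}=\mathfrak{S}_2$. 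Since $\llbracket \mathfrak{S}_2,\mathfrak{S}_2,\mathfrak{S}_2\rrbracket\neq\{0\}$, $\mathfrak{S}_2$ is not abelian, so it is simple. The expected difficulty here is, again, only in nailing down the coefficients and the exact characteristic hypothesis under which the scalars that appear are invertible.
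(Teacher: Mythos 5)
Your proposal is correct and follows essentially the same route as the paper: write the multiplication tables for the two symmetrized subalgebras, reduce to the single nontrivial commutator $D$ in each, check the $D_{x,y}$-identity on basis triples, exhibit $\langle e_{ij}\rangle$ as a proper ideal of $\mathfrak{S}_1$, and prove simplicity of $\mathfrak{S}_2$ by the same ideal computation (with the same $char(\mathbb{F})\neq 2$ caveat the paper notes). The only differences are cosmetic: you verify $\mathfrak{S}_2$ explicitly where the paper says "analogous," and you make explicit the simplicity-based non-isomorphism argument that the paper leaves as "it is clear."
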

\begin{Proof}
The proof of the first assertion will only be done in the case of the subalgebra $\mathfrak{S}_{1}$, since the other case could be proved analogously.

The multiplication table for the basis elements of $\mathfrak{S}_{1}$ is given by:
\begin{equation*}
\llbracket e_{ii},e_{ii},e_{ii}\rrbracket=6e_{ii},\quad \llbracket e_{ii},e_{ii},e_{ij}\rrbracket=2e_{ij}\quad \text{and}\quad \llbracket e_{ii},e_{ij},e_{ij}\rrbracket=\llbracket e_{ij},e_{ij},e_{ij}\rrbracket=0.
\end{equation*}
Thus, considering the matrix representation of the right multiplication operators $R_{\left(e_{ii},e_{ii}\right)}$,  $R_{\left(e_{ii},e_{ij}\right)}$ and $R_{\left(e_{ij},e_{ij}\right)}$ with respect to the basis $\left\{e_{ii},e_{ij}\right\}$, we have:
\begin{equation*}
R_{\left(e_{ii},e_{ii}\right)}=
\left(
\begin{array}{cc}
6 & 0 \\
0 & 2
\end{array}
\right), \quad
R_{\left(e_{ii},e_{ij}\right)}=
\left(
\begin{array}{cc}
0 & 2 \\
0 & 0
\end{array}
\right) \quad \text{and}\quad
R_{\left(e_{ij},e_{ij}\right) }=0.
\end{equation*}
Defining $D_{(x_{1},x_{2}),(y_{1},y_{2})}$ as before, the only non-trivial case for these operators is given by
\begin{equation*}
D=D_{\left(e_{ii},e_{ii}\right),\left(e_{ii},e_{ij}\right)}=
\left(
\begin{array}{cc}
0 & 8 \\
0 & 0
\end{array}
\right)
\end{equation*}
(or a scalar multiple of this), which means that
\begin{equation*}
D\left(e_{ii}\right)=8e_{ij} \text{ and } D\left(e_{ij}\right)=0.
\end{equation*}
Knowing this and due to the symmetry properties of the operator $D$, for the verification of the $D_{x,y}$-identity it is sufficient to do it in the four cases of the multiplication table above. In the first case, we will have
\begin{equation*}
LHS_{D}=D\ \llbracket e_{ii},e_{ii},e_{ii}\rrbracket =48e_{ij} =3\llbracket D\left(e_{ii}\right),e_{ii},e_{ii}\rrbracket=RHS_{D}.
\end{equation*}
For the other three cases, both sides of the identity will be null, proving that  $\mathfrak{S}_{1}$  is a ternary Jordan algebra. 

Finally, observing the multiplication table for the basis elements of each subalgebra $\mathfrak{S}_{i}, i=1,2$, it is an easy task to prove that $\left\langle e_{ij} \right\rangle _{\mathbb{F}} $ is an ideal of $\mathfrak{S}_{1}$.

Admit now that $\mathfrak{I}$ is an ideal of $\mathfrak{S}_{2}$ and consider $x=\alpha e_{ij}+ \beta e_{ji} \in \mathfrak{I}\backslash \{0\}$.  Observing that the multiplication table for the basis elements of $\mathfrak{S}_{2}$ is given by:
\begin{equation*}
\llbracket e_{ij},e_{ij},e_{ij}\rrbracket=\llbracket e_{ji},e_{ji},e_{ji}\rrbracket=0,\ \llbracket e_{ij},e_{ij},e_{ji}\rrbracket=2e_{ij} \text{ and } \llbracket e_{ij},e_{ji},e_{ji}\rrbracket=2e_{ji},
\end{equation*}
we have
\begin{equation*}
\llbracket x, e_{ij},e_{ij}\rrbracket=2\beta e_{ij}.
\end{equation*}
If $\beta \neq 0$, then  $e_{ij}\in \mathfrak{I}$ (we avoid the case $char \left(\mathbb{F}\right) = 2$ which would lead to an identically zero multiplication). This way,
\begin{equation*}
\llbracket e_{ij}, e_{ji},e_{ji}\rrbracket =2 e_{ji}\in \mathfrak{I}
\end{equation*}
which implies that $e_{ji}\in \mathfrak{I}$ and thus $\mathfrak{I}=\mathfrak{S}_{2}$. If $\beta =0$, then  $x=\alpha e_{ij}\in \mathfrak{I}\backslash \{0\}$ implies $e_{ij}\in \mathfrak{I}$ and we arrive to the same conclusion.

Finally, it is clear that these two algebras are not isomorphic.

\end{Proof}

Concerning the identities verified in $\mathfrak{A}$, it is possible to prove the same results we have achieved about the algebra in the third section, by using similar techniques. This means that:
\begin{itemize}
    \item the identities of degree $1$ satisfied by $\mathfrak{A}$ are  resumed in its total commutativity;
    \item all degree $2$ identities satisfied by  $\mathfrak{A}$ result from lifting the total commutativity of the anticommutator. 
\end{itemize}

\subsection{Ternary algebras defined on the Cayley-Dickson algebras}

The Cayley-Dickson doubling process, \cite{Schafer_article}, can give us new examples of ternary Jordan algebras. Let us recall such process. Consider a unital algebra $\mathcal{A}$ over a field $\mathbb{F}$,  $char \left(\mathbb{F}\right) =0$,  equipped with an involution $x\mapsto \overline{x}$ such that
\begin{equation*}
x+\overline{x},\ x \overline{x}\in \mathbb{F} \text{, for all }x\in \mathcal{A}.
\end{equation*}
Let $a\in \mathbb{F} \backslash \left\{ 0\right\} $ and define a new algebra $%
\left( \mathcal{A},a\right) $ as follows:
\begin{equation*}
\begin{tabular}{ll}
$\mathcal{A}\oplus \mathcal{A}$, & the underlying vector space, \\
 $\left(
x_{1},x_{2}\right) +\left( y_{1},y_{2}\right) =\left(
x_{1}+y_{1},x_{2}+y_{2}\right), $ &the addition, \\
$c\left( x_{1},x_{2}\right) =\left( cx_{1},cx_{2}\right), $ &the
scalar multiplication, \\ $\left( x_{1},x_{2}\right) \left( y_{1},y_{2}\right) =\left(
x_{1}y_{1}+ay_{2}\overline{x_{2}},\overline{x_{1}}y_{2}+y_{1}x_{2}\right),
$ &the multiplication.
\end{tabular}
\end{equation*}
The corresponding involution is given by:
\begin{equation*}
\overline{\left( x_{1},x_{2}\right) }=\left(
\overline{x_{1}},-x_{2}\right) .
\end{equation*}
Starting with $\mathbb{F} $ such that $char\left( \mathbb{F} \right) \neq 2$, we obtain a sequence of $2^{t}$-dimensional algebras denoted by $\mathcal{U}_{t}$, among which:
\begin{equation*}
\begin{tabular}{lll}
$\mathcal{U}_{0}=\mathbb{F}, $ &the scalars, &  commutative and associative, \\
 $\mathcal{U}_{1}=\mathbb{C}\left( a\right) =\left( \mathbb{F}
,a\right), $ & generalized complex numbers, &  commutative and associative, \\
$\mathcal{U}_{2}=\mathbb{H}\left( a,b\right) =\left(
\mathbb{C}\left( a\right)
,b\right), $ &the generalized quaternions, & not commutative and associative, \\
$\mathcal{U}_{3}=\mathbb{O}\left( a,b,c\right) =\left(
\mathbb{H}\left( a,b\right) ,c\right), $ &
the generalized octonions, &  not commutative, not associative and alternative, \\
\end{tabular}
\end{equation*}
are the most notable examples. Define on each $\mathcal{U}_{t}$, $t=2,3,...$ the ternary
multiplication:
\begin{equation}
\llbracket x,y,z\rrbracket =\left( x\overline{y}\right) z
\label{CD-ternarymult}
\end{equation}
and take
\begin{equation*}
\mathcal{D}_{t}=\left( \mathcal{U}_{t},\llbracket .,.,.\rrbracket \right)
.
\end{equation*}
Clearly, this ternary mutiplication is not totally commutative, so these algebras are not ternary Jordan algebras. Before going on, we will recall some properties of compositon alegbras (thus, valid in  particular in $\mathcal{U}_{2}$ and $\mathcal{U}_{3}$).

\begin{Lem}
Let $\mathbb{A}$ be a composition algebra with identity $1$, with an involution $\overline{ }$ and a bilinear symmetric non-degenerate form $\langle.,.\rangle$.  
For any alements $a,b,c\in \mathbb{A}$, we have
\begin{enumerate}
\item $a\overline{a}b=a(\overline{a}b)=n(a)b=b\overline{a}a=b(\overline{a}a)$;
\item $a\overline{b}c+a\overline{c}b=2\langle b,c\rangle a$;
\item $a(\overline{b}c)+b(\overline{a}c)=2\langle a,b\rangle c$.
\\ If, aditionally, $a,b,c$ are different elements in an orthonormal basis, then:
\item $\overline{a}b\overline{a}=-\overline{b}$;
\item $a\overline{b}c=-a\overline{c}b$;
\item $a(\overline{b}c)=-b(\overline{a}c)$.
\end{enumerate}
\end{Lem}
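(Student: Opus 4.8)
The plan is to verify each of the six identities directly from the defining relation of a composition algebra, namely the quadratic norm form $n$ with associated symmetric bilinear form $\langle x,y\rangle=\frac12\big(n(x+y)-n(x)-n(y)\big)$, the identity $x\overline{x}=\overline{x}x=n(x)1$, and the linearized forms $x\overline{y}+y\overline{x}=2\langle x,y\rangle 1=\overline{x}y+\overline{y}x$. I would first recall that in a composition algebra one has the Moufang/alternative-type relations $\overline{x}(xy)=n(x)y$ and $(yx)\overline{x}=n(x)y$; these are standard and can be taken as known for $\mathcal{U}_2$ (associative) and $\mathcal{U}_3$ (alternative), which is the only generality needed here. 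Part (1) is then just a restatement: $a\overline{a}b=n(a)b$ by $a\overline{a}=n(a)1$, and $a(\overline{a}b)=n(a)b$ by the linearized alternativity $\overline{a}(ab)=n(a)b$ applied after replacing $a\mapsto\overline{a}$ and using $n(\overline{a})=n(a)$; the two right-hand versions follow symmetrically.

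For parts (2) and (3) I would linearize. Starting from $a\overline{a}b=n(a)b$ and replacing $a$ by $b+c$, expanding and subtracting the $b$-only and $c$-only instances, the cross terms give $b\overline{c}+c\overline{b}$ multiplied into (and, on the other side, against) the third variable, together with $n(b+c)-n(b)-n(c)=2\langle b,c\rangle$; after matching terms one obtains exactly $a\overline{b}c+a\overline{c}b=2\langle b,c\rangle a$ and, from the other slot, $a(\overline{b}c)+b(\overline{a}c)=2\langle a,b\rangle c$. The key algebraic point is that in an alternative algebra the associator is alternating, so the reassociations needed to collect the cross terms are legitimate; I would state this explicitly since it is where associativity-or-its-substitute is used.

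Parts (4)--(6) are the specializations to distinct orthonormal basis elements $a,b,c$, i.e.\ $n(a)=n(b)=n(c)=1$ and $\langle a,b\rangle=\langle a,c\rangle=\langle b,c\rangle=0$, together with the fact that for such a vectorial (trace-zero, norm-one) element $\overline{a}=-a$. Then (5) is immediate from (2): $a\overline{b}c=-a\overline{c}b$ because $\langle b,c\rangle=0$; similarly (6) is immediate from (3). For (4), $\overline{a}b\overline{a}$: using $\overline{a}=-a$, $\overline{b}=-b$ and (2) with $b=c$ giving $a\overline{a}a=n(a)a=a$, or more directly expanding $\overline{a}b\overline{a}+\overline{b}$ and showing it vanishes via $a\overline{b}+b\overline{a}=2\langle a,b\rangle=0$ and the alternativity relation $\overline{a}(ba)$-type rewriting; I would compute $\overline{a}b\overline{a}=\overline{a}\,b\,\overline{a}$ and use $ab=-ba$ (valid for distinct orthonormal vectors, since $ab+ba=-(a\overline{b}+b\overline{a})+2\langle\cdot\rangle$-type manipulation, or directly $ab+ba=\overline{(ab+ba)}$ forces it into $\mathbb{F}1$ and its norm-pairing with $1$ vanishes) to get $\overline{a}b\overline{a}=-aba=(ab)(-a)\cdot(-1)=\ldots=-\overline{b}$.

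The main obstacle is purely bookkeeping: making sure every reassociation is justified in the non-associative case $\mathcal{U}_3$ by invoking the alternative laws (or the stronger fact that any subalgebra generated by two elements of an alternative algebra is associative, Artin's theorem), so that (1)--(6), each involving at most products of three elements drawn from at most three generators, can be manipulated freely. Once that is flagged, the computations are routine linearizations of the composition identity and the substitution $\overline{a}=-a$ for vectorial basis elements.
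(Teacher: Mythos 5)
The paper does not actually prove this lemma: it is stated as a recollection of standard facts about composition algebras (valid in particular for $\mathcal{U}_2$ and $\mathcal{U}_3$), so there is no in-paper argument to compare against. Your route --- the Kirmse identities $x(\overline{x}y)=n(x)y$, $(x\overline{y})y=n(y)x$, their linearizations, and Artin's theorem to license the reassociations in the alternative case --- is exactly the standard way to establish (1)--(3), and deducing (5) and (6) from (2) and (3) via $\langle b,c\rangle=\langle a,b\rangle=0$ is clean and correct. Two small points deserve repair. First, for (2) you should linearize $(a\overline{b})b=n(b)a$ in the slot $b\mapsto b+c$, not $a\overline{a}b=n(a)b$ in $a$; linearizing the latter in $a$ yields the trace identity $(a\overline{c}+c\overline{a})b=2\langle a,c\rangle b$, which is a different statement. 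Second, in (4) there is a sign slip ($\overline{a}b\overline{a}=(-a)b(-a)=+aba$, and then $aba=-(ba)a=-b\,a^2=b=-\overline{b}$), and more substantively your argument there uses $\overline{a}=-a$ and $ab=-ba$, which fail when one of the basis elements is $1$ --- and the paper's orthonormal basis of $\mathbb{H}(a,b)$ is $\{1,a,b,ab\}$, so $1$ is a legitimate choice. The identity (4) still holds in those cases ($\overline{1}b\overline{1}=b=-\overline{b}$ and $\overline{a}\,1\,\overline{a}=a^2=-1=-\overline{1}$), but they need to be checked separately rather than absorbed into the generic computation.
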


Let us forget total commutativity. Note that $D_{x,y}=-D_{y,x}$ trivially holds on each $\mathcal{D}_{t}$ . Under these circumstances, we have the following results.

\begin{Th} 
$\mathcal{D}_{2}$ is a simple ternary $D_{x,y}$-derivation algebra.
\end{Th}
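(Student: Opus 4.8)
The statement has two parts: (i) $\mathcal{D}_2$ satisfies the $D_{x,y}$-identity (\ref{der-Jordan_Dxy}), i.e. $D_{x,y}\in Der(\mathcal{D}_2)$ for all $x=(x_1,x_2)$, $y=(y_1,y_2)$; and (ii) $\mathcal{D}_2$ is simple. The plan is to fix an orthonormal basis $\{1,e_1,e_2,e_3\}$ of $\mathcal{U}_2=\mathbb{H}(a,b)$ (with $e_1^2,e_2^2,e_3^2\in\mathbb{F}^\times$, $\overline{e_i}=-e_i$, and $e_ie_j=-e_je_i$ for $i\neq j$) and to reduce everything to this basis using multilinearity.

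For part (i), I would first compute the right multiplication operators $R_{(u,v)}:z\mapsto (u\overline v)z$ on this $4$-dimensional space, restricting attention, by the linearity and symmetry properties of $D_{x,y}$ noted before Theorem~\ref{THM 1} ($D$ is linear in each argument, $D_{(x_1,x_2),(y_1,y_2)}$ is symmetric under swapping $x_1\leftrightarrow x_2$ and $y_1\leftrightarrow y_2$, $D_{x,y}=-D_{y,x}$, $D_{x,x}=0$), to basis arguments. This leaves only a handful of genuinely distinct cases for $(x_1,x_2,y_1,y_2)$: the case where some argument is $1$ (e.g. $x=(1,e_i)$, $y=(1,e_j)$, or $x=(e_i,e_j)$, $y=(1,e_k)$, etc.) and the case where all four lie among $\{e_1,e_2,e_3\}$. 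Here the composition-algebra identities from the preceding Lemma do the work: because $\mathbb{H}$ is associative, $\llbracket x,y,z\rrbracket=(x\bar y)z=x(\bar y z)$, and $R_{(u,v)}=L_{u\bar v}$ is left multiplication by the fixed element $u\bar v$. Hence $D_{(x_1,x_2),(y_1,y_2)}=L_{x_1\overline{x_2}}L_{y_1\overline{y_2}}-L_{y_1\overline{y_2}}L_{x_1\overline{x_2}}=L_{[x_1\overline{x_2},\,y_1\overline{y_2}]}$ is itself a left multiplication operator, by the \emph{commutator} $c:=[x_1\overline{x_2},y_1\overline{y_2}]$. Now in the quaternions the commutator of any two elements is a pure (trace-zero) quaternion, and one checks directly that $L_c$ with $c$ pure is a derivation of the ternary product: $L_c((x\bar y)z)=c((x\bar y)z)$ must equal $((cx)\bar y)z+(x\overline{cy})z+(x\bar y)(cz)$; expanding the right side and using $\overline{cy}=\bar y\,\bar c=-\bar y c$ (as $\bar c=-c$ for pure $c$) plus associativity, the three terms telescope precisely to $c(x\bar y z)$. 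So part (i) reduces to the single algebraic fact that left multiplication by a pure quaternion is a ternary derivation — and since every $D_{x,y}$ is of this form, we are done. (Alternatively, one runs the finite basis check case-by-case using items (4)--(6) of the Lemma, which is what I expect the authors do.)

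For part (ii), simplicity, I would take a nonzero ideal $\mathcal{I}\trianglelefteq\mathcal{D}_2$ and show $\mathcal{I}=\mathcal{D}_2$. Pick $0\neq w\in\mathcal{I}$. Using that $\mathcal{U}_2$ is a division algebra when $a,b$ are chosen so the norm form is anisotropic — or, in the general split case, arguing more carefully — one can multiply $w$ into a unit: from $w\neq 0$ there is $v$ with $w\bar w=n(w)\neq 0$ in the anisotropic case, and then $\llbracket w, \lambda w, 1\rrbracket=(w\,\overline{\lambda w})\cdot 1=\bar\lambda\, n(w)\in\mathbb{F}^\times\subseteq\mathcal{I}$ after scaling, so $1\in\mathcal{I}$; but $\llbracket 1,1,z\rrbracket=z$ for all $z$, hence $\mathcal{I}=\mathcal{D}_2$. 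In the split case I would instead exploit the ideal conditions $\llbracket\mathcal{I},\mathcal{A},\mathcal{A}\rrbracket\subseteq\mathcal{I}$ etc. together with $R_{(1,1)}=\mathrm{id}$ and the invertibility of $R_{(1,e_i)}=L_{e_i}$ (since $e_i$ is a unit, $e_i^2\in\mathbb{F}^\times$) to show that the span of any single nonzero element under these operators is all of $\mathcal{U}_2$; concretely, if $w=\alpha_0+\sum\alpha_ie_i\in\mathcal{I}$ then $\llbracket w,1,e_i\rrbracket$, $\llbracket w,e_i,e_j\rrbracket$, etc. produce enough elements to isolate $1$, after which $\mathcal{I}=\mathcal{D}_2$ as above. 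Non-abelianness is clear since $\llbracket 1,1,1\rrbracket=1\neq 0$.

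\textbf{Main obstacle.} The computational heart is part (i): verifying that $D_{x,y}$ is a ternary derivation for all the basis cases. The conceptual shortcut — recognizing $D_{x,y}=L_{[x_1\overline{x_2},y_1\overline{y_2}]}$ with pure-quaternion commutator, and that $L_c$ for pure $c$ derives $(x\bar y)z$ — collapses this to one short identity, so the real difficulty is organizational rather than deep: one must be careful with the conjugations (note $\overline{uv}=\bar v\bar u$ and $\bar u=-u$ only on \emph{pure} elements, not on $1$) and with the fact that this argument uses associativity of $\mathbb{H}$ crucially, which is exactly why it works for $\mathcal{D}_2$ but would require modification for $\mathcal{D}_3$ over the octonions. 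For part (ii), the mild subtlety is that $\mathcal{U}_2=\mathbb{H}(a,b)$ need not be a division algebra for all choices of $a,b$, so the cleanest uniform argument is the one via invertibility of $R_{(1,1)}=\mathrm{id}$ and $R_{(1,e_i)}=L_{e_i}$ rather than via norms.
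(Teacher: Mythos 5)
Your strategy is sound and genuinely different from the paper's: the paper verifies the $D_{x,y}$-identity by an explicit case analysis over the basis $\{1,a,b,ab\}$ (five cases according to how many of $x_1,x_2,y_1,y_2$ coincide, with items (4)--(6) of the composition-algebra lemma doing the work in the single nontrivial case), whereas you collapse everything to one structural fact. However, your part (i) contains a concrete slip: since $zR_{(x_1,x_2)}=\llbracket z,x_1,x_2\rrbracket=(z\overline{x_1})x_2=z(\overline{x_1}x_2)$, the operator $R_{(x_1,x_2)}$ is \emph{right} multiplication in $\mathbb{H}(a,b)$ by $\overline{x_1}x_2$, not left multiplication by $x_1\overline{x_2}$. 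Hence $D_{x,y}$ is right multiplication by the commutator $c=[\overline{x_1}x_2,\overline{y_1}y_2]$, which is still trace-zero, i.e. $\overline{c}=-c$; and the telescoping identity you state holds equally for right multiplication by a pure element: $(xc)\overline{y}z$ cancels against $x\overline{yc}z=-xc\overline{y}z$, leaving $x\overline{y}(zc)=((x\overline{y})z)c$. So the argument survives verbatim once the direction is corrected, and it is shorter and more transparent than the paper's check; it also isolates exactly where associativity is used, which explains why the same scheme fails for $\mathcal{D}_3$.

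On simplicity your caution is justified and in fact exposes a gap in the paper: the paper's step $\llbracket z,z,b_i\rrbracket=n(z)b_i\in\mathbb{I}\setminus\{0\}$ silently assumes $n(z)\neq 0$ for every nonzero $z$, i.e. that $\mathbb{H}(a,b)$ is a division algebra, which need not hold for arbitrary $a,b$. Your operator-based alternative is the right fix, and it can be closed cleanly: for $w\in\mathcal{I}$ the ideal conditions give $\overline{w}=\llbracket 1,w,1\rrbracket\in\mathcal{I}$, then $uw=\llbracket u,\overline{w},1\rrbracket\in\mathcal{I}$ and $wv=\llbracket w,1,v\rrbracket\in\mathcal{I}$ for all $u,v$, so $\mathcal{I}$ is a two-sided associative ideal of $\mathbb{H}(a,b)$; since a generalized quaternion algebra in characteristic $0$ is simple as an associative algebra (division or $M_2(\mathbb{F})$), $\mathcal{I}=0$ or $\mathcal{I}=\mathcal{D}_2$. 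With these two repairs your proposal is a complete and arguably stronger proof than the one in the paper.
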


\begin{proof}
Consider arbitrary $x=(x_{1},x_{2})$, $y=(y_{1},y_{2})$, with $x_{i},y_{i}\in \mathbb{H}( a,b )$, $i=1,2$. It is clear that every linear operator $D_{x,y}$ is also linear in each $x_{i}$ and each $y_{i}$, so we can consider that these elements belong to $\mathcal{B}=\left\{1, a,b,ab\right\}$, the usual orthonormal basis of $\mathbb{H}(a,b)$. Before going further, let us recall that $\mathcal{U}_{2}$ is associative and let us state some properties of $\llbracket .,.,.\rrbracket$ and of each operator $D_{x,y}$ in $\mathcal{D}_{2}$.

First of all, by the previous lemma, note that for pairwise different elements $x,y,z\in \mathcal{B}$, we have:
\begin{equation*}
\llbracket x,y,z\rrbracket =-\llbracket y,x,z\rrbracket =-\llbracket x,z,y\rrbracket.
\end{equation*}
Further, if $x_{i},y_{i}\in \mathcal{B}$ are pairwise different, we have:
\begin{equation*}
D_{(x_{1},x_{2}),(y_{1},y_{2})}=-D_{(x_{1},x_{2}),(y_{2},y_{1})}=-D_{(x_{2},x_{1}),(y_{1},y_{2})}
\end{equation*}

In order to verify the $D_{x,y}$-identity, we will consider the following cases:
\begin{enumerate}
\item $x_{1}=x_{2}=y_{1}=y_{2}$;
\item only three elements among $\left\{x_{1},x_{2},y_{1},y_{2}\right\}$ are equal;
\item two pairs of elements among $\left\{x_{1},x_{2},y_{1},y_{2}\right\}$ are equal;
\item only two elements among $\left\{x_{1},x_{2},y_{1},y_{2}\right\}$ are equal;
\item all $\left\{x_{1},x_{2},y_{1},y_{2}\right\}$ are pairwise different.
\end{enumerate}

In the first case, $D_{(x_{1},x_{1}),(x_{1},x_{1})}=0$ and the $D_{x,y}$-identity trivially holds.

Admit that among $\left\{x_{1},x_{2},y_{1},y_{2}\right\}$ only three are equal. The possible cases are:
\begin{equation*}
\text{(i) }\ x_{1}=x_{2}=y_{1}, \ y_{2}\neq x_{1};\ \text{(ii) }\ x_{1}=x_{2}=y_{2},\ y_{2}\neq x_{1};\ \text{(iii) }\ x_{1}=y_{1}=y_{2},\ x_{2}\neq x_{1};\ \text{(iv) }\ x_{2}=y_{1}=y_{2},\ x_{1}\neq x_{2}.
\end{equation*}
It is clear that the last two subcases are a consequence of the first two (since $D_{x,y}=-D_{y,x}$). Further, with respect to the case (i) we have:
\begin{equation*}
D_{(x_{1},x_{1}),(x_{1},y_{2})}(z)=z\overline{x_{1}}x_{1}\overline{x_{1}}y_{2}-z\overline{x_{1}}y_{2}\overline{x_{1}}x_{1}=0,\text{ for all } z\in \mathcal{B}.
\end{equation*}
Thus, the $D_{x,y}$-identity trivially holds. The second case is analogous to the first.

Concerning the case (3), we may have three subcases:
\begin{equation*}
\text{(i) }\ x_{1}=x_{2},\ y_{1}=y_{2},\ x_{1}\neq y_{1};\ \text{(ii) }\ x_{1}=y_{1},\ x_{2}=y_{2};\ \text{(iii) }\ x_{1}=y_{2},\ x_{2}=y_{1}.
\end{equation*}
The second case is a trivial one. Concerning the other two subcases, we have $D_{x,y}=0$ by direct computations (using the previous lemma).

Let us now analyse the case (4). We can have six subcases:
\begin{equation*}
\text{(i) }\ x_{1}=x_{2};\ \text{(ii) }\ x_{1}=y_{1};\ \text{(iii) }\ x_{1}=y_{2};\ \text{(iv) }\ x_{2}=y_{1};\ \text{(v) }\ x_{2}=y_{2};\ \text{(vi) }\ y_{1}=y_{2},
\end{equation*}
(where, in each subcase, the remaining elements are pairwise different and different from the coincident ones). It is clear that not all subcases must be checked, due to the properties of the operators $D_{x,y}$. In fact, since $D_{x,y}=-D_{y,x}$, (i) implies (vi). Further, (iii), (iv) and (v) are a consequence of (ii), since:
\begin{equation*}
D_{(x_{1},x_{1}),(y_{1},x_{1})}=-D_{(x_{1},x_{1}),(x_{1},y_{1})},\ D_{(x_{1},x_{2}),(x_{2},y_{2})}=-D_{(x_{2},x_{1}),(x_{2},y_{2})}\ \text{ and } D_{(x_{1},x_{2}),(y_{1},x_{2})}=-D_{(x_{2},x_{1}),(y_{1},x_{2})},
\end{equation*}
respectively.
Concerning the subcase (i), for every $z\in \mathcal{B}$ we have:
\begin{eqnarray*}
D_{(x_{1},x_{1}),(y_{1},y_{2})}(z)  & = & \llbracket \llbracket z,x_{1},x_{1}\rrbracket , y_{1},y_{2} \rrbracket-\llbracket \llbracket z,y_{1},y_{2}\rrbracket ,x_{1},x_{1}  \rrbracket \\
 & = & z\overline{x_{1}}x_{1}\overline{y_{1}}y_{2}-z\overline{y_{1}}y_{2}\overline{x_{1}}x_{1}\\
 &=& z\overline{y_{1}}y_{2}-z\overline{y_{1}}y_{2}=0
\end{eqnarray*}
for all  $z\in \mathcal{B}$, concluding this case.

Concerning the subcase (ii), for every $z\in \mathcal{B}$ we have:
\begin{eqnarray*}
D_{(x_{1},x_{2}),(x_{1},y_{2})}(z)  & = & \llbracket \llbracket z,x_{1},x_{2}\rrbracket , x_{1},y_{2} \rrbracket-\llbracket \llbracket z,x_{1},y_{2}\rrbracket ,x_{1},x_{2}  \rrbracket \\
 & = & z\overline{x_{1}}x_{2}\overline{x_{1}}y_{2}-z\overline{x_{1}}y_{2}\overline{x_{1}}x_{2}\\
 &=& -z\overline{x_{2}}y_{2}+z\overline{y_{2}}x_{2}\\
 &=& -z\overline{x_{2}}y_{2}.
\end{eqnarray*}
Let us check both sides of the $D_{x,y}$-identity for $z_{1},z_{2},z_{3}\in \mathcal{B}$. We have:
\begin{eqnarray*}
LHS_{D}&=&D_{(x_{1},x_{2}),(x_{1},y_{2})}\left(\llbracket z_{1},z_{2},z_{3} \rrbracket\right)\\
&=& -2z_{1}\overline{z_{2}}z_{3}\overline{x_{2}}y_{2}.
\end{eqnarray*}
On the other hand, denoting by $RHS_{D}(1)$, $RHS_{D}(2)$ and $RHS_{D}(3)$, respectively, the three terms of the right side of the identity, we have:
\begin{equation*}
RHS_{D}(1)= \llbracket D_{(x_{1},x_{2}),(x_{1},y_{2})}(z_{1}) ,z_{2},z_{3}\rrbracket =  -2z_{1}\overline{x_{2}}y_{2}\overline{z_{2}}z_{3},
\end{equation*}
\begin{equation*}
RHS_{D}(2)= \llbracket  z_{1}, D_{(x_{1},x_{2}),(x_{1},y_{2})}(z_{2}),z_{3}\rrbracket =  -2z_{1}\overline{y_{2}}x_{2}\overline{z_{2}}z_{3},
\end{equation*}
and
\begin{equation*}
RHS_{D}(3)=  \llbracket  z_{1},z_{2},D_{(x_{1},x_{2}),(x_{1},y_{2})}(z_{3})\rrbracket =  -2z_{1}\overline{z_{2}}z_{3}\overline{x_{2}}y_{2}.
\end{equation*}
Now, since $RHS_{D}(3)=LHS_{D}$, we must verify if $RHS_{D}(1)+RHS_{D}(2)=0$. Using the properties of the previous lemma, it is possible to prove that $z_{1}\overline{y_{2}}x_{2}=-z_{1}\overline{x_{2}}y_{2}$. The proof of this fact must be divided in three cases: (i) $z_{1}=y_{2}$; (ii) $z_{1}=x_{2}$; (iii) $z_{1}$ different from $x_{2}$ and $y_{2}$. In the first two cases, we must use (4) of the previous lemma; in case (iii), we need to apply (5). Therefore, the two mentioned summands cancel. 

Finally, in the last case we have
\begin{equation*}
D_{x,y}(z)=z\overline{x_{1}}x_{2}\overline{y_{1}}y_{2}-z\overline{y_{1}}y_{2}\overline{x_{1}}x_{2}=0,\text{ for all } z\in  \mathcal{B}.
\end{equation*}

In order to prove the simplicity of this ternary algebra, let us consider $\mathbb{I}\neq \{0\}$ an ideal of $\mathcal{D}_{2}$. By definition, $\llbracket \mathbb{I},\mathcal{D}_{2},\mathcal{D}_{2} \rrbracket,\llbracket \mathcal{D}_{2},\mathbb{I},\mathcal{D}_{2} \rrbracket,\llbracket \mathcal{D}_{2},\mathcal{D}_{2},\mathbb{I} \rrbracket \subseteq \mathcal{D}_{2}$. Then, for for every $z\in \mathbb{I}\backslash \{0\}$ and for all $b_{i} \in  \mathcal{B}$, we have 
\begin{equation*}
 \llbracket  z,z,b_{i}\rrbracket =n(z)b_{i} \in \mathbb{I}\backslash \{0\} .
\end{equation*}
Thus, all basis elements belong to $\mathbb{I}$ and $\mathbb{I}=\mathcal{D}_{2}$,  ending the proof.
\end{proof}


\begin{Th}
Consider $\mathbb{H}( a,b )= \left\langle 1 \right\rangle _{\mathbb{F}} \oplus \mathbb{H}( a,b )_{s}$. 
Then $D\in Der(\mathcal{D}_{2})$ if and only if there exists $\Phi,\Psi\in End\left(\mathbb{H}( a,b )\right)$ such that $\Phi \in Der (\mathbb{H}( a,b ))$  and $\Psi(x)=x\Psi(1)$, for all $x\in \mathbb{H}( a,b )$ and $\Psi(1)\in \mathbb{H}( a,b )_{s}$ satisfying
\begin{equation*}
D=\Phi+\Psi.  \label{Thm_der_D_2}
\end{equation*}
\end{Th}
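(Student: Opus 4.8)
The guiding observation is that the binary product of $\mathbb{H}(a,b)$ can be recovered inside $\mathcal{D}_{2}$: since $\overline{1}=1$ we have $\llbracket x,1,y\rrbracket=(x\overline{1})y=xy$, and also $\llbracket 1,1,1\rrbracket=1$. The whole statement will then follow from two short manipulations of the $D_{x,y}$-identity that use only the associativity of $\mathcal{U}_{2}=\mathbb{H}(a,b)$ and the behaviour of the involution, together with the remark that $Der(\mathcal{D}_{2})$ is a linear subspace of $End(\mathbb{H}(a,b))$, so that $D=\Phi+\Psi$ is a derivation of $\mathcal{D}_{2}$ as soon as each summand is.

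For the ``if'' direction I would treat the two summands separately. First, for $\Psi$ given by $\Psi(x)=x\psi$ with $\psi\in\mathbb{H}(a,b)_{s}$, associativity gives $\Psi(\llbracket x,y,z\rrbracket)=((x\overline{y})z)\psi=(x\overline{y})(z\psi)$, while, since $\overline{y\psi}=\overline{\psi}\,\overline{y}=-\psi\overline{y}$, the right-hand side of the $D_{x,y}$-identity equals $(x\psi\overline{y})z-(x\psi\overline{y})z+(x\overline{y})(z\psi)$; the two outer terms cancel, so $\Psi\in Der(\mathcal{D}_{2})$. Second, for $\Phi\in Der(\mathbb{H}(a,b))$ I would invoke the fact (established in the last paragraph) that $\Phi$ commutes with the conjugation, $\overline{\Phi(y)}=\Phi(\overline{y})$; then the Leibniz rule for the binary product together with associativity gives $\Phi((x\overline{y})z)=(\Phi(x)\overline{y})z+(x\overline{\Phi(y)})z+(x\overline{y})\Phi(z)$, i.e. $\Phi\in Der(\mathcal{D}_{2})$. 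Adding the two, $\Phi+\Psi\in Der(\mathcal{D}_{2})$.

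For the ``only if'' direction, let $D\in Der(\mathcal{D}_{2})$. Applying $D$ to $\llbracket 1,1,1\rrbracket=1$ and using $\llbracket D(1),1,1\rrbracket=\llbracket 1,1,D(1)\rrbracket=D(1)$ and $\llbracket 1,D(1),1\rrbracket=\overline{D(1)}$, we get $D(1)=2D(1)+\overline{D(1)}$, hence $\overline{D(1)}=-D(1)$, i.e. $D(1)\in\mathbb{H}(a,b)_{s}$. Set $\psi:=D(1)$, $\Psi(x):=x\psi$ and $\Phi:=D-\Psi$. By the ``if'' direction $\Psi\in Der(\mathcal{D}_{2})$, hence $\Phi\in Der(\mathcal{D}_{2})$ as well, and $\Phi(1)=D(1)-1\cdot\psi=0$. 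Finally, applying $\Phi$ to $xy=\llbracket x,1,y\rrbracket$ and using $\Phi(1)=0$ gives $\Phi(xy)=\llbracket\Phi(x),1,y\rrbracket+\llbracket x,\Phi(1),y\rrbracket+\llbracket x,1,\Phi(y)\rrbracket=\Phi(x)y+x\Phi(y)$, so $\Phi\in Der(\mathbb{H}(a,b))$; together with $\Psi(x)=x\Psi(1)$ and $\Psi(1)=\psi\in\mathbb{H}(a,b)_{s}$ this is the desired decomposition $D=\Phi+\Psi$ (which is in fact forced, since any binary derivation kills $1$).

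The only point that is not purely formal is the auxiliary fact, used above, that a derivation $\Phi$ of the composition algebra $\mathbb{H}(a,b)$ satisfies $\Phi(1)=0$ and $\overline{\Phi(y)}=\Phi(\overline{y})$; I expect this to be the main, if minor, obstacle. It can be proved directly: $\Phi(1)=\Phi(1\cdot 1)=2\Phi(1)$ forces $\Phi(1)=0$ (here $char(\mathbb{F})=0$); then, since $y+\overline{y}\in\mathbb{F}\cdot 1$, we get $\Phi(\overline{y})=-\Phi(y)$, and for a pure $y$ the relations $\Phi(y\overline{y})=\Phi(n(y)1)=0$ and $\overline{y}=-y$ yield $\Phi(y)y+y\Phi(y)=0$. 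Writing $\Phi(y)=\lambda 1+p$ with $p\in\mathbb{H}(a,b)_{s}$ and using that $py+yp\in\mathbb{F}\cdot 1$ (polarization of the norm) while $1\cdot y+y\cdot 1=2y\notin\mathbb{F}\cdot 1$ for $y\neq 0$, we obtain $\lambda=0$; hence $\Phi(y)\in\mathbb{H}(a,b)_{s}$ for every $y$, i.e. $\overline{\Phi(y)}=-\Phi(y)=\Phi(\overline{y})$.
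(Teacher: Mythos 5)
Your proposal is correct and follows essentially the same route as the paper: derive $\overline{D(1)}=-D(1)$ from $\llbracket 1,1,1\rrbracket=1$, split off $\Psi(x)=xD(1)$, and check that the remaining part is a binary derivation via $xy=\llbracket x,1,y\rrbracket$, while the converse rests on the same cancellation $\left(x\Psi(1)\overline{y}\right)z-\left(x\Psi(1)\overline{y}\right)z$ and on $\overline{\Phi(y)}=\Phi(\overline{y})$. The only substantive difference is that you actually prove the auxiliary fact $\Phi\left(\mathbb{H}(a,b)\right)\subseteq\mathbb{H}(a,b)_{s}$ (via $\Phi(y)y+y\Phi(y)=0$ and polarization of the norm), which the paper merely asserts with ``it can be shown''.
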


\begin{proof}
Admit that $D\in Der(\mathcal{D}_{2})$ . Then, from 
\begin{equation}
D\ \llbracket x,y,z\rrbracket  =\llbracket
D\left( x\right) ,y,z \rrbracket +\llbracket x,D\left(y\right) ,z\rrbracket +\llbracket x,y,D\left( z\right)\rrbracket \label{aux_D2_1}
\end{equation}
and from (\ref{CD-ternarymult}) we obtain
\begin{equation*}
D\ (xy) = D\ (x\overline{1}y)  = D(x)y+x\overline{D(1)}y+xD(y).
\end{equation*}
Setting $x=y=1$ in this identity we obtain
\begin{equation*}
D(1)+\overline{D(1)}=0,
\end{equation*}
which implies that $\overline{D(1)}=-{D(1)}$ and $D(1)\in \mathbb{H}( a,b )_{s}$. Whence, 
\begin{equation}
D\ (xy) = D(x)y-xD(1)y+xD(y). \label{aux_D2_2}
\end{equation}
Let us build $g\in End\left(\mathbb{H}( a,b )\right)$ such that
\begin{equation}
g(x)=D(x)-xD(1). \label{aux_D2_3}
\end{equation}
From (\ref{aux_D2_2}), it is possible to see that
\begin{equation*}
g(xy)=D(xy)-xyD(1)=D(x)y-xD(1)y+xD(y)-xyD(1)=g(x)y+xg(y),\ \text{for all}\ x,y\in \mathbb{H}( a,b ).
\end{equation*}
From here and from (\ref{aux_D2_3}) we have
\begin{equation*}
D(x)=g(x)+xD(1).
\end{equation*}

Reciprocally, admit that $D\in End\left(\mathbb{H}( a,b )\right) $ is such that
\begin{equation*}
D=\Phi+\Psi
\end{equation*}
where $\Phi,\Psi\in End\left(\mathbb{H}( a,b )\right)$ such that $\Phi \in Der \left(\mathbb{H}( a,b )\right)$  and $\Psi(x)=x\Psi(1)$, for all $x\in \mathbb{H}( a,b )_{s}$ and $\Psi(1)\in \mathbb{H}( a,b )_{s}$. Note that $\Psi(x)=x\Psi(1)$ also holds if $x\in \mathbb{H}( a,b )$. Then, for all $x,y,z\in \mathbb{H}( a,b )_{s}$ we have:
\begin{equation*}
\Psi\ \llbracket x,y,z\rrbracket  =\Psi\left( x\overline{y}z\right) =x\overline{y}z \ \Psi(1).
\end{equation*}
Further, 
\begin{eqnarray*}
\llbracket \Psi\left( x\right) ,y,z \rrbracket +\llbracket x,\Psi\left(y\right) ,z\rrbracket +\llbracket x,y,\Psi\left( z\right)\rrbracket & = &x\Psi\left( 1\right)\overline{y}z+x\overline{y\Psi\left( 1\right)}z+x\overline{y}z\Psi\left( 1\right)\\
 & = & x\Psi\left( 1\right)\overline{y}z-x\Psi\left( 1\right)\overline{y}z+x\overline{y}z\Psi\left( 1\right)\\ & = & x\overline{y}z \ \Psi(1).
\end{eqnarray*}
Note that this also holds if $x,y,z\in \left\langle 1 \right\rangle _{\mathbb{F}}$, so $\Psi \in Der(\mathcal{D}_{2})$. 

Concerning $\Phi\in  Der \left(\mathbb{H}( a,b )\right)$,  it is easy to conclude that $\Phi(1)=0$. Further, it can be shown that $\Phi \left(\mathbb{H}( a,b )\right)\subseteq \mathbb{H}( a,b )_{s}$, implying that, for any $y=\alpha \ 1 +y_{s}$, $y_{s}\in \mathbb{H}( a,b )_{s}$, we have
\begin{equation*}
 \overline{\Phi(y)} = \overline{\Phi\left(y_{s}\right)} =-\Phi \left( y_{s}\right) = \Phi \left( \overline{y}\right).
\end{equation*}
Thus, if $x,y,z\in \mathbb{H}( a,b )$, we have:
\begin{eqnarray*}
\Phi\ \llbracket x,y,z\rrbracket & =&\Phi\left( x\overline{y}z\right)\\
& = & \Phi (x) \overline{y}z+ x\Phi\left(\overline{y}\right)z + x\overline{y}\Phi\left( z\right)\\
   & = & \Phi (x) \overline{y}z+ x\overline{\Phi\left(y\right)}z + x\overline{y}\Phi\left( z\right)\\ 
  &=& \llbracket \Phi\left( x\right) ,y,z \rrbracket +\llbracket x,\Phi\left(y\right) ,z\rrbracket +\llbracket x,y,\Phi \left( z\right)\rrbracket .
\end{eqnarray*}
Whence, $\Phi\in Der(\mathcal{D}_{2})$ and the same happens with $D$.
\end{proof}

\begin{Lem}
All degree $1$ identities in $\mathcal{D}_{2}$ are a consequence of
\begin{equation*}
\llbracket y,x,x \rrbracket =\llbracket x,x,y\rrbracket.
\label{1-identity-D_2}
\end{equation*}
\end{Lem}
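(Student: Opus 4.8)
The statement asserts that every degree-1 identity in $\mathcal{D}_2$ — i.e.\ every $\mathbb{F}$-linear relation among the six permuted products $\llbracket x_{\sigma(1)},x_{\sigma(2)},x_{\sigma(3)}\rrbracket$ — is spanned (as a $T$-ideal generator at level 1) by the single relation $\llbracket y,x,x\rrbracket = \llbracket x,x,y\rrbracket$. A degree-1 identity has the general form
\begin{equation*}
\sum_{\sigma\in\mathcal{S}_3}\alpha_\sigma\,\llbracket x_{\sigma(1)},x_{\sigma(2)},x_{\sigma(3)}\rrbracket = 0,\qquad \alpha_\sigma\in\mathbb{F},
\end{equation*}
where $\llbracket a,b,c\rrbracket=(a\overline b)c$ in $\mathcal{D}_2=(\mathcal{U}_2,\llbracket\cdot,\cdot,\cdot\rrbracket)$. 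So the plan is: first identify \emph{which} relations of this shape actually hold in $\mathbb{H}(a,b)$, by substituting elements of the orthonormal basis $\mathcal{B}=\{1,a,b,ab\}$; then exhibit a minimal generating set of the resulting solution space; and finally show each generator follows from $\llbracket y,x,x\rrbracket = \llbracket x,x,y\rrbracket$ (together with the trivial relabelling of variables, which is part of any "consequence of" statement).

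First I would pin down the symmetries of $\llbracket\cdot,\cdot,\cdot\rrbracket$ coming from the composition-algebra lemma already proved: for pairwise different basis elements $x,y,z$ one has $\llbracket x,y,z\rrbracket=-\llbracket y,x,z\rrbracket=-\llbracket x,z,y\rrbracket$, so on three distinct basis vectors the product is fully alternating; and for $\llbracket y,x,x\rrbracket=(y\overline x)x=n(x)y=x\overline x\cdot y$ — using part (1) of the lemma with the roles arranged — which equals $\llbracket x,x,y\rrbracket$, giving the claimed identity on the nose. Plugging the three non-redundant substitution patterns ($x=y=z$; exactly two of $x,y,z$ equal; all three distinct) into the generic linear combination then forces conditions on the $\alpha_\sigma$. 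Concretely: the all-equal case gives no information (the product is a nonzero multiple of the common element and $\sum_\sigma\alpha_\sigma$ need not vanish a priori — wait, here it will constrain nothing new because the same relation is implied by the two-equal case); the two-equal case, say $x=z\ne y$, compares $\llbracket x,y,x\rrbracket,\llbracket x,x,y\rrbracket,\llbracket y,x,x\rrbracket$, whose pairwise differences among the six permutation values must cancel; and the all-distinct case, where the product is alternating, kills any combination not respecting the sign of the permutation. Solving the resulting linear system over $\mathbb{F}$ (with $\operatorname{char}\mathbb{F}=0$) yields the solution space of valid $(\alpha_\sigma)$, and I expect it to be exactly the space forced by: $\llbracket a,b,c\rrbracket$ alternating on distinct arguments together with $\llbracket y,x,x\rrbracket=\llbracket x,x,y\rrbracket$ on repeated arguments.

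The remaining step is the "consequence of" direction: show that the alternating-on-distinct-elements behaviour is itself derivable from $\llbracket y,x,x\rrbracket=\llbracket x,x,y\rrbracket$ by linearisation. Polarising $x\mapsto x+z$ in $\llbracket y,x,x\rrbracket=\llbracket x,x,y\rrbracket$ produces $\llbracket y,x,z\rrbracket+\llbracket y,z,x\rrbracket=\llbracket x,z,y\rrbracket+\llbracket z,x,y\rrbracket$, and combining such polarisations with further substitutions recovers all the sign relations on distinct arguments; since $\operatorname{char}\mathbb{F}=0$ the linearisation is reversible, so nothing is lost. Hence every valid degree-1 relation lies in the ideal generated by the single stated identity. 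I expect the main obstacle to be purely bookkeeping: correctly enumerating the substitution cases without redundancy and verifying that the linear system's solution space has exactly the predicted dimension — one must be careful that the "all three equal" substitution and the "two equal" substitutions together do not over- or under-constrain, and that the polarisation argument genuinely reproduces \emph{every} generator rather than just a proper subspace. Once the dimension count matches, the proof closes.
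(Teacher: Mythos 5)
The paper states this lemma with no proof at all, so there is nothing of the authors' to compare against; your plan is the natural one and, carried out, it does establish the statement. Concretely: writing the general multilinear degree-$1$ candidate $\sum_{\sigma\in S_3}\alpha_\sigma\, x_{\sigma(1)}\overline{x_{\sigma(2)}}x_{\sigma(3)}=0$ and evaluating on elements of $\{1,a,b,ab\}$, the substitutions with a repeated argument give three independent constraints (forcing $\alpha_{123}=-\alpha_{321}$, $\alpha_{132}=-\alpha_{231}$, $\alpha_{213}=-\alpha_{312}$) and the all-distinct substitution gives one more, namely $\sum_\sigma\mathrm{sgn}(\sigma)\alpha_\sigma=0$, so the space of multilinear degree-$1$ identities has dimension exactly $2$; on the other side the $S_3$-orbit of the full linearization $\llbracket y,x,z\rrbracket+\llbracket y,z,x\rrbracket-\llbracket x,z,y\rrbracket-\llbracket z,x,y\rrbracket$ consists of three vectors summing to zero, hence spans a $2$-dimensional subspace of the identity space, forcing equality; and in characteristic $0$ the stated identity and its linearization are mutually derivable (set $z=x$). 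The one point in your proposal that needs repair is the final paragraph: the ``fully alternating on three distinct basis elements'' behaviour is \emph{not} an identity of $\mathcal{D}_2$ (it fails for repeated arguments, where $\llbracket x,x,y\rrbracket=n(x)y$ is certainly not $-\llbracket x,x,y\rrbracket$), so it neither can nor needs to be ``derived by linearisation'' from $\llbracket y,x,x\rrbracket=\llbracket x,x,y\rrbracket$; it is only a fact about particular evaluations, whose sole role is to reduce the all-distinct case to the single constraint above. The proof closes on the dimension count you mention at the very end, not on recovering the sign relations as consequences.
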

\medskip
\begin{Lem}
All degree $2$ identities in $\mathcal{D}_{2}$ are a consequence of
(\ref{1-identity-D_2})  and from the following degree $2$ identities:

\begin{equation*}
\llbracket \llbracket x,y,z\rrbracket ,u,v\rrbracket =\llbracket x,y,\llbracket
z,u,v\rrbracket \rrbracket,  \label{2-identity-1-D_2}
\end{equation*}
\begin{equation*}
\llbracket \llbracket x,y,z\rrbracket ,u,v\rrbracket =\llbracket x,\llbracket
u,z,y\rrbracket ,v\rrbracket.  \label{2-identity-2-D_2}
\end{equation*}
\end{Lem}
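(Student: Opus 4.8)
The plan is to exploit the very concrete description $\llbracket x,y,z\rrbracket=x\overline{y}z$ on $\mathcal{D}_{2}=\mathbb{H}(a,b)$ together with the associativity of $\mathbb{H}(a,b)$. First I would record the elementary \emph{word form} of a degree-$2$ monomial: applying the ternary product twice and using $\overline{uv}=\overline{v}\,\overline{u}$ and associativity, every double ternary product collapses to a single associative word with alternating bars,
\[
\llbracket\llbracket p_{1},p_{2},p_{3}\rrbracket,p_{4},p_{5}\rrbracket=\llbracket p_{1},p_{2},\llbracket p_{3},p_{4},p_{5}\rrbracket\rrbracket=p_{1}\overline{p_{2}}p_{3}\overline{p_{4}}p_{5},\qquad \llbracket p_{1},\llbracket p_{2},p_{3},p_{4}\rrbracket,p_{5}\rrbracket=p_{1}\overline{p_{4}}p_{3}\overline{p_{2}}p_{5}.
\]
In particular the two displayed degree-$2$ identities,
\[
\llbracket\llbracket x,y,z\rrbracket,u,v\rrbracket=\llbracket x,y,\llbracket z,u,v\rrbracket\rrbracket\quad\text{and}\quad\llbracket\llbracket x,y,z\rrbracket,u,v\rrbracket=\llbracket x,\llbracket u,z,y\rrbracket,v\rrbracket,
\]
do hold in $\mathcal{D}_{2}$ (this is the easy half), and — read as rewriting rules, together with their liftings, i.e. relabelling the five entries — they allow one to replace any of the three association types of a degree-$2$ monomial by a monomial whose inner bracket sits in the left-most slot. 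Hence, modulo these two identities and their liftings (all of which lie in the ideal of identities of $\mathcal{D}_{2}$, since the identities themselves hold there), the space of multilinear degree-$2$ monomials is spanned by the $\mathcal{S}_{5}$-worth of "left-slot'' monomials $\ell_{\pi}=\llbracket\llbracket x_{\pi(1)},x_{\pi(2)},x_{\pi(3)}\rrbracket,x_{\pi(4)},x_{\pi(5)}\rrbracket$.

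The remaining — and main — task is to pin down the linear relations, valid in $\mathcal{D}_{2}$, among the alternating words $W_{\pi}=x_{\pi(1)}\overline{x_{\pi(2)}}x_{\pi(3)}\overline{x_{\pi(4)}}x_{\pi(5)}$ and to show every such relation is a lifting of a degree-$1$ identity. I would pass to the split form $\mathbb{H}(a,b)\otimes_{\mathbb{F}}K\cong M_{2}(K)$ over a quadratic extension $K$ (legitimate, since multilinear identities are insensitive to base field extension), where $\overline{q}=t(q)\,1-q$ with $t$ the reduced trace; substituting this expresses each $W_{\pi}$ as a trace-polynomial combination of ordinary associative words of lengths $5$, $4$, $3$ in $x_{1},\dots,x_{5}$. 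The relations among such expressions are then governed by the identities with trace of $M_{2}$ — reduced Cayley--Hamilton in degree $2$ and the standard identity $s_{4}=0$ — so the relation space is a concrete, computable $\mathcal{S}_{5}$-submodule of the permutation-type module; decomposing into $\mathcal{S}_{5}$-isotypic components (or, less elegantly, evaluating all $360$ monomials on all tuples from the orthonormal basis $\{1,a,b,ab\}$ of $\mathbb{H}(a,b)$ and solving the linear system, as in the third section) keeps the computation finite. One then checks that this relation space is precisely the $\mathcal{S}_{5}$-span of the liftings of the polarized degree-$1$ identity $\llbracket a,b,c\rrbracket+\llbracket a,c,b\rrbracket=\llbracket b,c,a\rrbracket+\llbracket c,b,a\rrbracket$ — which is exactly the multilinearization of $\llbracket y,x,x\rrbracket=\llbracket x,x,y\rrbracket$, hence a consequence of it, and, by the earlier lemma on degree-$1$ identities of $\mathcal{D}_{2}$, generates them all.

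Putting the two steps together, an arbitrary degree-$2$ identity of $\mathcal{D}_{2}$ is, modulo the two listed degree-$2$ identities and their liftings, a linear combination of liftings of $\llbracket y,x,x\rrbracket=\llbracket x,x,y\rrbracket$; hence it is a consequence of that degree-$1$ identity together with the two degree-$2$ identities, which is the claim. The genuine obstacle is the completeness half of the middle step — verifying that $\mathcal{D}_{2}$ carries no "exotic'' degree-$2$ identity beyond what these generators force. This is exactly where one must carry out the linear algebra over the $\mathcal{S}_{5}$-module (equivalently the basis-wise computation), since the word form by itself only supplies the easy inclusion that the listed identities are genuine identities; all the rest of the argument is a routine unwinding of the formula $\llbracket x,y,z\rrbracket=x\overline{y}z$.
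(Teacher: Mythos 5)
The paper states this lemma with no proof at all, so there is nothing to compare your argument against; I can only assess it on its own terms. Your first step is correct and complete: since $\mathbb{H}(a,b)$ is associative and the involution is an anti-automorphism of order two, the computation
$\llbracket\llbracket x,y,z\rrbracket ,u,v\rrbracket = x\overline{y}z\overline{u}v = \llbracket x,y,\llbracket z,u,v\rrbracket\rrbracket = \llbracket x,\llbracket u,z,y\rrbracket ,v\rrbracket$
simultaneously verifies that the two listed degree-$2$ identities hold in $\mathcal{D}_{2}$ and shows that, modulo them and their relabellings, every multilinear degree-$2$ monomial is congruent to a left-slot monomial, i.e.\ to an alternating word $x_{\pi(1)}\overline{x_{\pi(2)}}x_{\pi(3)}\overline{x_{\pi(4)}}x_{\pi(5)}$. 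That normalization is exactly right, and your remark that multilinearity plus $char(\mathbb{F})=0$ lets you pass to the split form $M_{2}(K)$ is also legitimate.

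The gap is in the second step, which is where the entire content of the lemma lives: the claim that every linear relation among the $120$ words $W_{\pi}$ that holds identically in $\mathbb{H}(a,b)$ lies in the $\mathcal{S}_{5}$-span of the liftings of the multilinearized degree-$1$ identity $\llbracket a,b,c\rrbracket+\llbracket a,c,b\rrbracket=\llbracket b,c,a\rrbracket+\llbracket c,b,a\rrbracket$. You correctly isolate this as the crux and propose two equivalent finite procedures (the trace-identity computation for $M_{2}$ with its symplectic involution, or brute-force evaluation of all monomials on tuples from $\{1,a,b,ab\}$), but you execute neither; the sentence ``one then checks that this relation space is precisely\ldots'' asserts the conclusion rather than deriving it. Concretely, what is missing is a dimension count: the dimension of the space of relations among the $W_{\pi}$ versus the dimension of the subspace generated by the liftings of the degree-$1$ identity (embeddings into each of the three slots and substitutions of a triple for a variable), and a verification that the two agree. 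Without that, the argument establishes only the easy inclusion -- that the listed identities are identities -- and remains a proof plan rather than a proof. The plan itself is sound and, if the computation were carried out, would yield a complete argument (indeed, more than the paper offers).
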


From the definition of the algebra $\mathbb{O}( a,b,c)$ it is possible to verify the following result.

\begin{Lem} $\mathcal{D}_{3}$ is not a ternary $D_{x,y}$-derivation algebra.
\end{Lem}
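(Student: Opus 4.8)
The plan is to exhibit an explicit counterexample inside $\mathcal{D}_3=(\mathbb{O}(a,b,c),\llbracket.,.,.\rrbracket)$ with $\llbracket x,y,z\rrbracket=(x\overline{y})z$, showing that the $D_{x,y}$-identity (\ref{der-Jordan_Dxy}) fails. Recall that for $\mathcal{D}_2$ the whole verification came down to the ``two elements equal'' subcase (ii), where one needed $z_1\overline{y_2}x_2=-z_1\overline{x_2}y_2$; the proof of that equality used alternativity/associativity facts (parts (4)--(6) of the composition-algebra lemma) that are available in the associative algebra $\mathbb{H}(a,b)$ but fail in the merely alternative algebra $\mathbb{O}(a,b,c)$. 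So the natural strategy is to pick $x=(x_1,x_2)$, $y=(y_1,y_2)$ lying in the standard orthonormal basis $\{1,e_1,\ldots,e_7\}$ of the octonions, with exactly the coincidence pattern of that subcase, chosen so that the associator $(z_1,\overline{z_2}z_3\overline{x_2},y_2)$ (or an analogous triple of basis units) is nonzero.

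Concretely, first I would compute $D_{x,y}(z)=\llbracket\llbracket z,x_1,x_2\rrbracket,y_1,y_2\rrbracket-\llbracket\llbracket z,y_1,y_2\rrbracket,x_1,x_2\rrbracket=((z\overline{x_1})x_2)\overline{y_1})y_2-\ldots$ for a suitable small choice, e.g.\ take $x_1=y_1=1$ so that $D_{x,y}(z)=((z x_2)\overline{1})y_2-((z y_2)\overline{1})x_2=(zx_2)y_2-(zy_2)x_2$, which is an associator/commutator expression that is identically zero only in the associative case. Picking $x_2=e_1$, $y_2=e_2$ (basis units with $e_1e_2=e_3$, say) one gets $D_{x,y}(z)=(ze_1)e_2-(ze_2)e_1$; evaluating on basis units $z$ shows $D_{x,y}\neq 0$ but $D_{x,y}$ fails to be a derivation because, on triples $z_1,z_2,z_3$ chosen from units outside $\langle 1,e_1,e_2,e_3\rangle$, the left side $D_{x,y}((z_1\overline{z_2})z_3)$ and the sum of the three right-side terms differ by a nonzero multiple of a basis unit, exactly because octonion multiplication is non-associative. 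I would tabulate just one offending triple, compute $LHS_D$ and $RHS_D$ explicitly, and observe $LHS_D\neq RHS_D$.

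The key steps, in order, are: (1) write $D_{x,y}$ for the chosen coincidence pattern and simplify using $\overline{1}=1$; (2) specialize the remaining basis units so that $D_{x,y}$ is nonzero and easy to describe on the basis; (3) choose $z_1,z_2,z_3$ among the basis units and expand both sides of (\ref{der-TJA-mult}) using only the octonion multiplication table, keeping careful track of signs and of which products fail to associate; (4) conclude $LHS_D-RHS_D$ is a nonzero scalar multiple of a basis element, hence $\mathcal{D}_3$ is not a $D_{x,y}$-derivation algebra. One should remark that the same choice made $\mathcal{D}_2$ work precisely because $\mathbb{H}(a,b)$ is associative, so the counterexample isolates non-associativity as the obstruction, paralleling the computation already displayed for the symmetrized matrix algebra $\mathfrak{A}$.

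The main obstacle is purely bookkeeping: one must pick the octonion units so that every convenient simplification used in the $\mathcal{D}_2$ proof genuinely breaks, i.e.\ so the relevant associator is nonzero, and then carry the signs through the six-fold (well, here three-fold, by total-noncommutativity being abandoned) expansion without error. There is no conceptual difficulty — the statement is essentially a counterexample — but because octonions are non-associative one cannot reuse the lemma's parts (4)--(6) freely, so each product must be reduced by hand using a fixed multiplication table (e.g.\ the Fano-plane convention), and the chief risk is a sign slip that would make the two sides spuriously agree. I would therefore double-check the single offending evaluation by computing $D_{x,y}$ applied to $\llbracket z_1,z_2,z_3\rrbracket$ both directly and via the three-term sum, and confirm the discrepancy.
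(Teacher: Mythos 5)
Your approach is the same as the paper's: the lemma is proved by exhibiting one explicit $D_{x,y}$ and one triple on which the identity (\ref{ternary derivation}) fails, and your diagnosis that non-associativity of $\mathbb{O}(a,b,c)$ is the precise obstruction (the step $z_1\overline{y_2}x_2=-z_1\overline{x_2}y_2$ from the $\mathcal{D}_2$ argument is what breaks) matches the paper's logic. The paper's specific choice is $x_1=y_1=a$, $x_2=b$, $y_2=c$ with the triple $(ab,1,c)$, yielding $LHS_D=-2a$ versus $RHS_D=2a$; your simplification $x_1=y_1=1$, giving $D(z)=(zx_2)y_2-(zy_2)x_2=z[R_{x_2},R_{y_2}]$, is a legitimate and arguably cleaner variant. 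The one real gap is that you never actually produce and check an offending triple — for a counterexample lemma that deferred computation \emph{is} the proof, and your own caveat about sign slips shows why it cannot be waved through. For the record, your choice does work: with $x_2=e_1$, $y_2=e_2$ and the standard Fano-plane table one finds $D(e_4)=2e_7$, $D(1)=2e_3$, $D(e_5)=-2e_6$, and for $(z_1,z_2,z_3)=(e_4,1,e_5)$ one gets $\llbracket e_4,1,e_5\rrbracket=e_1$, hence $LHS_D=D(e_1)=-2e_2$, while the three right-hand terms each equal $-2e_2$, so $RHS_D=-6e_2\neq LHS_D$. With that computation supplied, your proof is complete and equivalent in substance to the paper's.
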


\begin{proof}
Consider an orhonormal basis $\mathcal{B}=\left\{1, a,b,ab,c,ac,bc,(ab)c\right\} $ of $\mathbb{O}( a,b,c)$, with the usual multiplication in this composition algebra. Let us take
\begin{equation*}
x_{1}=a=y_{1},\ x_{2}=b\ \text{and} \ y_{2}=c.
\end{equation*}
Then, 
\begin{equation*}
D=D_{x,y}(z)=\left(\left(\left(z\overline{a}\right)b\right)\overline{a}\right)c-\left(\left(\left(z\overline{a}\right)c\right)\overline{a}\right)b,\text{ for all } z\in \mathbb{O}( a,b,c ).
\end{equation*}
Let us take $z_{1}=ab$, $z_{2}=1$ and $z_{3}=c$. Then, $\llbracket z_{1},z_{2},z_{3}\rrbracket=\llbracket ab,1,c\rrbracket =(ab)c$ and  it is possible to obtain
\begin{equation*}
LHS_{D}=D\left((ab)c\right)=-2a.
\end{equation*}
On the other hand, since $D(ab)=-2ac$, $D(1)=2bc$ and $D(c)=2b$, it is possible to show that
\begin{equation*}
RHS_{D}=\llbracket D(ab),1,c\rrbracket+\llbracket ab,D(1),c\rrbracket+\llbracket ab,1,D(c)\rrbracket=2a.
\end{equation*}
Thus, $\mathbb{O}( a,b,c )$ is not a $D_{x,y}$-derivation algebra.
\end{proof}


\subsection{An analog of the TKK-construction for ternary algebras}

We recall the Tits-Kantor-Koecher (TKK for short) unified construction of the exceptional simple classical Lie algebras, by means of a composition algebra and a degree three simple Jordan algebra (see  \cite{Kantor}, \cite{Koecher} and \cite{Tits}). In this subsection we will use an analogue construction to define ternary multiplications and, if possible, ternary Jordan algebras.

Let $L=L_{-1}\oplus L_0\oplus L_1$ be a $3$-graded ternary algebra with the product $[x,y,z]$. By definition, we have:
\begin{equation*}
\left[L_{i},L_{j},L_{k}\right]\subseteq L_{i+j+k},
\end{equation*}
where the addition is considered modular (in ${-1,0,1}$). Following I. Kantor \cite{Kantor}, we define a ternary operation on $\mathcal{J}:=L_0$ by the rule:
\begin{equation}
 \llbracket x,y,z\rrbracket ={\mathcal{S}}_{x,y,z}[[[u_{-1},x,u_1],y,v_{-1}],z,v_1], \label{ternary_TKK}
\end{equation}
where ${\mathcal{S}}_{x,y,z}$ is the symmetrization operator in $x,y,z$ and  $u_i,v_i\in L_i$, $i=-1,1$.

Consider $L=A_1$ be the simple $4$-dimensional Filippov algebra over $\Co$ with the standard basis $\{e_1,e_2,e_3,e_4\}$ and the multiplication table
\begin{equation*}
 [e_1,\ldots,\hat{e_i},\ldots,e_4]=(-1)^ie_i. 
\end{equation*} 
Change this basis in $A_1$ to 
\begin{equation*}
a=\frac{{\bf i}}{2}e_1,\ b=\frac{1}{2}e_2,\ a_{-1}=e_3-{\bf i}e_4,\
 a_{1}=e_3+{\bf i}e_4, \text{ where } {\bf i}^2=-1.
\end{equation*} 
Then 
\begin{equation*}
\left\langle a_{-1}\right\rangle \oplus \left\langle a,b\right\rangle\oplus \left\langle a_{1}\right\rangle
\end{equation*}
is a $3$-grading on $A_1$, with $\mathcal{J}=L_{0}=\left\langle a,b\right\rangle$. Indeed, due to the anticommutativity of the multiplication $[.,.,.]$, to reach that conclusion it is enough to observe that 
\begin{equation*}
 [a,a_{-1},a_1]=-2b\quad \text{and}\quad [b,a_{-1},a_1]=-2a.
\end{equation*}
Putting $u_{-1}=v_{-1}=a_{-1},\ u_1=v_1=a_1$ in (\ref{ternary_TKK}), we obtain the following multiplication table in $\mathcal{J}$:
 \begin{equation}\label{tca1}
   \llbracket a,a,a\rrbracket = 6b,\quad \llbracket a,a,b\rrbracket = 2a,\quad \   \llbracket a,b,b\rrbracket =	-2b\quad \text{and}\quad \llbracket b,b,b\rrbracket =-6a.
 \end{equation}
 Then, we have
\begin{equation*}
   R_{(a,a)}=6e_{12}+2e_{21},\quad  R_{(a,b)}=2e_{11}-2e_{22}  \quad \text{and} \quad R_{(b,b)}=-2e_{12}-6e_{21},
\end{equation*}
and thus:
\begin{equation*}
  D_{(a,a),(a,b)}\doteq -3e_{12}+e_{21},\quad   D_{(a,a),(b,b)}\doteq e_{11}-e_{22}\quad \text{and} \quad D_{(a,b),(b,b)}\doteq -e_{12}+3e_{21}
\end{equation*}
where $\doteq$ denotes an equality up to a scalar and $e_{ij}$  is the matrix unit in the basis $\{a,b\}$. Now, we may consider a ternary commutative algebra $\mathcal{J}$ over an arbitrary field with the multiplication table (\ref{tca1}). The inclusion
\begin{equation*}
 D_{x,y}\in \left\langle -3e_{12}+e_{21},e_{11}-e_{22}, -e_{12}+3e_{21}\right\rangle 
\end{equation*} 
is immediate. Verifying the $D_{x,y}$-identity, we conclude that it holds if and only if $char \left(\mathbb{F}\right) = 2$.

In the Kantor article the product was defined on the space
 $L_{-1}$ by the rule $xy=[[a,x],y]$ for a fixed $a\in L_1$. We can try to
 do the same. Put
 \begin{equation*}
 \llbracket x,y,z\rrbracket={\mathcal S}_{x,y,z}[[[u_{0},x,u_1],y,v_{1}],z,v_0],
\end{equation*} 
 where $u_i,v_i\in L_i,\ i=0,1,\ x,y,z\in L_{-1}$. In this case we have 
\begin{equation*} 
 \llbracket a_{-1},a_{-1},a_{-1}\rrbracket =a_{-1},
 \end{equation*}
  with 
  \begin{equation*}
  a_{-1}=e_3-{\bf i}e_4,\ u_0=\frac{\textbf{i}}{4}e_1,\ v_0=e_2,\ u_1=v_1=a_1=e_3+{\bf i}e_4.
\end{equation*}
 It is easy to notice that every one-dimensional ternary algebra $\mathcal{J}$ is
 a ternary Jordan algebra, and $\mathcal{J}$ is simple if and only if $\{\mathcal{J},\mathcal{J},\mathcal{J}\}\neq
 0$.

\section{Reduced algebras of $n$-ary Jordan algebras}

Given an arbitrary class of $n$-ary algebras, $\mathcal{A}$, $n>2$,  with multiplication $\llbracket .,...,.\rrbracket$, let us fix $a\in \mathbb{V}$, the underlying vector space, and for each $i\in \{1,...,n\}$, define an $(n-1)$-ary algebras denoted by $\mathcal{A}_{i,a}$, by putting
\begin{equation*}
\llbracket x_{2},...,x_{n}\rrbracket _{i,a}=\llbracket x_{2},...,\underset{i\text{-th entry}}{\underbrace{ a
 }},...,x_{n}\rrbracket,
\text{ }x_{2},...,x_{n}\in \mathbb{V}.
\end{equation*}
Each algebra $\mathcal{A}_{i,a}$, defined on the same underlying space, is called a reduced algebra of $\mathcal{A}$. Under total commutativity or anticommutativity of $\llbracket .,...,.\rrbracket$, it is enough to consider $i=1$, which may be omitted by simply writing $\mathcal{A}_{a}$ and
\begin{equation*}
\llbracket x_{2},...,x_{n}\rrbracket _{a}=\llbracket a,x_{2},...,x_{n}\rrbracket,
\text{ }x_{2},...,x_{n}\in \mathbb{V}.
\end{equation*}

It may happen that each reduced algebra of an $n$-ary algebra belongs to the same class. Indeed, it is known that:
\begin{itemize}
\item reduced algebras of $n$-ary totally associative algebras are $(n-1)$-ary totally associative algebras;
\item reduced algebras of $n$-ary totally (anti)commutative algebras are $(n-1)$-ary totally (anti)commutative algebras;
\item reduced algebras of $n$-ary Leibniz algebras are $(n-1)$-ary Leibniz algebras;
\item reduced algebras of $n$-ary Filippov algebras algebras are $(n-1)$-ary Filippov algebras \cite{Fil};
\item reduced algebras of $n$-ary Malcev algebras are $(n-1)$-ary Malcev algebras \cite{Pojidaev}.
\end{itemize}
So, it is natural to put the following question: are the reduced algebras of $n$-ary Jordan algebras $(n-1)$-ary Jordan algebras? 

Consider $\mathcal{A}=\left( \mathbb{V},\llbracket .,...,.\rrbracket \right) $ an $n$-ary Jordan
algebra ($n> 2$) and let us fix $a$ on an arbitrary basis of $\mathbb{V}$. Define
now an $(n-1)$-ary algebra $\mathcal{A}_{a}=\left( \mathbb{V},\llbracket .,...,.\rrbracket
_{a}\right) $ such that
\begin{equation*}
\llbracket x_{2},...,x_{n}\rrbracket _{a}=\llbracket a,x_{2},...,x_{n}\rrbracket ,
\text{ }x_{2},...,x_{n}\in \mathbb{V}.
\end{equation*}
In order to analyze whether or not $\mathcal{A}_{a}$  is an $\left( n-1\right) $-ary Jordan algebra, observe that a right multiplication
operator $R_{x}$ in $\mathcal{A}_{a}$, where $x=\left( x_{3},...,x_{n}\right) $, is
defined as follows:
\begin{equation*}
zR_{x}=\llbracket z,x_{3},...,x_{n}\rrbracket _{a}=\llbracket
a,z,x_{3},...,x_{n}\rrbracket =\llbracket z,a,x_{3},...,x_{n}\rrbracket ,
\end{equation*}
which can be written in terms of a right multiplication operator in $\mathcal{A}$,
since
\begin{equation*}
R_{x}=R_{\left(a,x_{3},...,x_{n}\right)}=R_{\overline{x}},
\end{equation*}
where $\overline{x}=\left( a,x_{3},...,x_{n}\right)$.

Now, the commutator of right multiplications in $\mathcal{A}_{a}$ is given by:
\begin{eqnarray*}
D_{x,y}(z) &=&z\left( R_{x}R_{y}-R_{y}R_{x}\right) =D_{\overline{x},%
\overline{y}}(z) \\
&=&\llbracket \llbracket z,a,x_{3},...,x_{n}\rrbracket ,a,y_{3},...,y_{n}\rrbracket
-\llbracket \llbracket z,a,y_{3},...,y_{n}\rrbracket ,a,x_{3},...,x_{n}\rrbracket .
\end{eqnarray*}
Since $D_{\overline{x},\overline{y}}\in Der\left( \mathcal{A}\right) $, we have:

\begin{eqnarray*}
D_{x,y}\ \llbracket z_{2},z_{3},...,z_{n}\rrbracket _{a} &=&D_{\overline{x},%
\overline{y}}\ \llbracket a,z_{2},z_{3},...,z_{n}\rrbracket  \\
&=& \llbracket D_{\overline{%
x},\overline{y}}\left( a\right) ,z_{2},z_{3},...,z_{n}\rrbracket
+\sum_{i=2}^{n}\llbracket a,z_{2},...,D_{\overline{x},\overline{y}}\left(
z_{i}\right) ,...,z_{n}\rrbracket\\
&=&\llbracket \llbracket \llbracket a,a,x_{3},...,x_{n}\rrbracket
,a,y_{3},...,y_{n}\rrbracket -\llbracket \llbracket a,a,y_{3},...,y_{n}\rrbracket
,a,x_{3},...,x_{n}\rrbracket ,z_{2},z_{3},...,z_{n}\rrbracket
\end{eqnarray*}
\begin{eqnarray}
\label{red}
&&+\sum_{i=2}^{n}\llbracket z_{2},...,D_{x,y}\left( z_{i}\right)
,...,z_{n}\rrbracket _{a}. 
\end{eqnarray}

It happens that  the first summand in the last development may be different from
zero and thus $\mathcal{A}_{a}$ may not be an $\left( n-1\right) $-ary Jordan algebra.

\begin{Th}
The reduced algebras of the ternary Jordan algebra $\mathbb{A}$ defined in the third
section are not Jordan algebras.
\end{Th}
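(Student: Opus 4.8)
The plan is to make the reduced product explicit and then exhibit a single reduction that violates the Jordan identity (\ref{Jordan}). Fix the orthonormal basis $\mathcal{B}=\{b_1,\dots,b_n\}$ of $\mathbb{V}$ used in the third section. By the total commutativity of $\llbracket.,.,.\rrbracket$ the index $i$ is irrelevant, so it is enough to look at the reduction at $a=b_1$, whose binary product is
$$x\cdot y=\llbracket b_1,x,y\rrbracket=(x,y)\,b_1+(b_1,y)\,x+(b_1,x)\,y .$$
From this and (\ref{TJA-multiplication}) one reads off at once the multiplication table of $\mathbb{A}_{b_1}$ on $\mathcal{B}$: namely $b_1\cdot b_1=3b_1$, $b_1\cdot b_j=b_j$ and $b_j\cdot b_j=b_1$ for $j\ge 2$, and $b_i\cdot b_j=0$ for $2\le i\ne j$.

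Next I would test (\ref{Jordan}) on this table. Assuming $\dim\mathbb{V}\ge 2$, so that $b_2$ is available, put $x=b_2$ and $y=b_1$. Then $x^{2}=b_2\cdot b_2=b_1$ and $xy=b_2\cdot b_1=b_2$, so the left-hand side of the Jordan identity is $(xy)x^{2}=b_2\cdot b_1=b_2$; on the other hand $yx^{2}=b_1\cdot b_1=3b_1$, whence the right-hand side is $x(yx^{2})=b_2\cdot(3b_1)=3b_2$. Since $char\left(\mathbb{F}\right)=0$ we have $b_2\ne 3b_2$, so the Jordan identity fails in $\mathbb{A}_{b_1}$; consequently no reduction $\mathbb{A}_{i,a}$ at a basis element $a$ of nonzero norm can be a Jordan algebra, which is the assertion. (Alternatively, and just as quickly, $\mathbb{A}_{b_1}$ is not even power-associative: for $x=b_2$ one gets $x^{4}=x^{3}\cdot x=b_2\cdot b_2=b_1$ but $(x^{2})^{2}=b_1\cdot b_1=3b_1$, which already contradicts a necessary condition for being Jordan in characteristic $0$.)

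There is no genuine obstacle here beyond bookkeeping; the only point deserving a remark is the degenerate case $\dim\mathbb{V}=1$, in which the reduction is one-dimensional and commutative — hence associative, hence automatically a Jordan algebra — which is exactly why the statement must be understood in the generic sense, as in the introduction.
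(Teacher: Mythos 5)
Your proof is correct, and it takes a genuinely different route from the paper's. The paper stays inside the derivation formalism of the section: it specializes the general formula (\ref{red}) for the commutator of right multiplications on a reduced algebra and shows that the obstruction term $\llbracket\llbracket\llbracket a,a,x_{3}\rrbracket,a,y_{3}\rrbracket-\llbracket\llbracket a,a,y_{3}\rrbracket,a,x_{3}\rrbracket,z_{2},\ldots,z_{n}\rrbracket$ does not vanish for $a=b_{1}$, $x_{3}=b_{2}$, $y_{3}=b_{1}$; hence $D_{x,y}$ fails to be a derivation of $\mathbb{A}_{a}$, and since every (binary) Jordan algebra satisfies (\ref{D_x,y-identity}), the reduction cannot be Jordan. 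You instead compute the binary multiplication table of $\mathbb{A}_{b_{1}}$ explicitly from (\ref{TJA-multiplication}) and refute the Jordan identity (\ref{Jordan}) itself on $x=b_{2}$, $y=b_{1}$; your table ($b_{1}\cdot b_{1}=3b_{1}$, $b_{1}\cdot b_{j}=b_{j}$, $b_{j}\cdot b_{j}=b_{1}$, $b_{i}\cdot b_{j}=0$ for $2\le i\ne j$) and the evaluation $(xy)x^{2}=b_{2}\ne 3b_{2}=x(yx^{2})$ both check out. Your argument is more elementary and yields a strictly stronger conclusion — the failure of $x^{4}=(x^{2})^{2}$ shows the reduction is not even power-associative, whereas the paper only refutes the weaker necessary Lie-triple/derivation condition — while the paper's computation exhibits the structural source of the failure, namely $D_{\overline{x},\overline{y}}(a)\neq 0$. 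Your caveat about $\dim\mathbb{V}=1$, where the one-dimensional reduction is trivially Jordan, is a worthwhile precision that the paper sidesteps by fixing $\dim\mathbb{V}=4$.
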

\begin{proof}
A counterexample can be observed by taking the above mentioned first summand, and considering, \textit{e.g.}, $n=3$, \ $\dim \mathbb{V}=4$, and putting $a=b_{1}$, $x_{3}=b_{2}$ and  $y_{3}=b_{1}$ ($b_{1}$ and $b_{2}$ in an orthonormal basis). Then 
\begin{equation*}
 \llbracket \llbracket a,a,x_{3}\rrbracket,a,y_{3}\rrbracket -\llbracket \llbracket a,a,y_{3}\rrbracket,a,x_{3}\rrbracket=-2b_{1},
\end{equation*}
and it is clear that that summand may not be zero.
\end{proof}

\begin{Remark}
Since the ternary multiplication of a Jordan triple system (recall remark (\ref{other})) is only partially commutative, it is straightforward that its reduced algebras may not be Jordan algebras. 
\end{Remark}

\begin{Remark}
The main subclass of $n$-ary Jordan algebras consists of totally commutative and totally associative $n$-ary algebras.
As  follows from (\ref{red}), the reduced algebras of any totally commutative and totally associative $n$-ary algebras are totally commutative and totally associative $(n-1)$-ary algebras.
\end{Remark}

\end{document}